\newtheorem{theorem}{Theorem}[section]
\newtheorem{lemma}[theorem]{Lemma}
\newtheorem{corollary}[theorem]{Corollary}
\theoremstyle{definition}
\newtheorem{definition}[theorem]{Definition}
\theoremstyle{remark}
\newtheorem{remark}[theorem]{Remark}
\numberwithin{equation}{section}
\newcommand{\Kk}{\mathbb{K}}
\newcommand{\Za}{\mathbb{Z}}
\newcommand{\Na}{\mathbb{N}}
\newcommand{\aut}{{\rm Aut}}
\newcommand{\gen}{\mathfrak{g}}
\newcommand{\Fa}{\mathbb{F}}
\newcommand{\Cc}{\mathcal{C}}
\newcommand{\lcm}{{\rm lcm}}
\newcommand{\dsum}{\displaystyle\sum}
\newcommand*\mymin[1][]{\min_{#1}\,}
\title{The p-rank of curves of Fermat type}
\author{Herivelto  Borges\thanks{Avenida Trabalhador S\~ao-carlense, 400, S\~ao Carlos, CEP 13566-590, SP, Brazil}\\
	 %\href{https://orcid.org/0000-0002-8100-3486}{\includegraphics[scale=0.06]{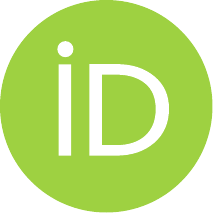}\hspace{1mm}{Herivelto  Borges}\thanks{Avenida Trabalhador S\~ao-carlense, 400, S\~ao Carlos, CEP 13566-590, SP, Brazil}} \\
	Instituto de Ci\^encias Matem\'aticas e de Computa\c{c}\~ao \\
	Universidade de S\~ao Paulo\\
	S\~ao Carlos - SP\\
	\texttt{hborges@icmc.usp.br} \\
	%% examples of more authors
	\And
	Cirilo Gon\c{c}alves\thanks{Avenida Ministro Olavo Drummond, 25, Arax\'a, CEP 38180-510, MG, Brazil} \\
	%\href{https://orcid.org/0000-0003-3744-9197}{\includegraphics[scale=0.06]{orcid.pdf}\hspace{1mm}Cirilo Gon\c{c}alves\thanks{Avenida Ministro Olavo Drummond, 25, Arax\'a, CEP 38180-510, MG, Brazil}}\\
	Departamento de Forma\c{c}\~ao Geral\\
	Centro Federal de Educa\c{c}\~ao Tecnol\'ogica de Minas Gerais\\
	Arax\'a - MG \\
	\texttt{cirilo@cefetmg.br} \\
	%% \AND
	%% Coauthor \\
	%% Affiliation \\
	%% Address \\
	%% \texttt{email} \\
	%% \And
	%% Coauthor \\
	%% Affiliation \\
	%% Address \\
	%% \texttt{email} \\
	%% \And
	%% Coauthor \\
	%% Affiliation \\
	%% Address \\
	%% \texttt{email} \\
}
\begin{document}
\maketitle

\begin{abstract}
%	\lipsum[1]
Let $\Kk$   be an algebraically closed field  of characteristic $p>0$. A pressing problem in the theory of algebraic curves is the  determination  of the $p$-rank of  a (nonsingular, projective, irreducible) curve $\mathcal{X}$  over  $\Kk$. This birational  invariant affects  arithmetic and geometric  properties of $\mathcal{X}$, and its fundamental role in the study of the  automorphism group 	$\aut (\mathcal{X})$ has been noted by many authors in the past few decades.  In this paper, we provide an extensive study of the $p$-rank of curves of Fermat type $y^m = x^n + 1$ over $\Kk=\bar{\mathbb{F}}_p$. We determine a combinatorial formula for this invariant in the general case and show how this leads  to explicit formulas of the $p$-rank of several such curves. By way of illustration, we present explicit formulas for more than  twenty subfamilies of such curves, where $m$ and $n$ are generally  given in terms of $p$. We  also show how the approach can be used to compute the $p$-rank of other types of curves.
\end{abstract}

% keywords can be removed
\keywords{p-rank \and Hasse-Witt invariant \and   automorphism group \and  supersingular curves}

\section{Introduction}
Let $\mathcal{X}$ be a nonsingular, projective, irreducible   curve of genus $\gen > 0$ over an algebraically closed field $\Kk$ of characteristic $p>0$, and let $J[p]$ be the kernel of the multiplication-by-$p$ morphism on the jacobian $J$ of $\mathcal{X}$.
%Arithmetic and geometric properties of $\mathcal{X}$  are often encoded in its birational invariants, such as the genus,  automorphism group, and  $p$-rank. The latter is t
The integer $\gamma(\mathcal{X}) \in \{0, \ldots, \gen\}$ for which  $J[p](\Kk) \cong (\Za/p\Za)^{\gamma(\mathcal{X})}$ is a birational invariant, called the $p$-rank of  $\mathcal{X}$. This number  naturally connects with other birational invariants, and its   study  is fundamental to addressing  several problems related to the classification of curves over finite fields.
%For instance, the fact that supersingular curves have zero $p$-rank \cite{OORT,SILV} is relevant in the study of curves whose number $N$ of $\Fa_q$-rational points attains the Hasse-Weil bound \cite{StichXing} 
%\begin{equation*}|N-(q+1)| \leq 2\gen\sqrt{q}.\end{equation*}

The fact that  the $p$-rank can affect the study of the automorphism group $\aut(\mathcal{X})$ of a curve $\mathcal{X}$ of genus $\gen \geq 2$ can be easily  illustrated by Nakajima's results in \cite[Theorems 1, 2, and 3]{Nakajima}. There is also a well-known connection between zero $p$-rank and large automorphism groups. Indeed, for $p>2$ and $G \leq \aut(\mathcal{X})$, with $G$ solvable or $\gen$ even, Giulietti and Korchm\'aros proved that there exists a constant $c_p>0$ such that if $|G| > c_p\gen^2$, then the $p$-rank of $\mathcal{X}$ is zero \cite{KorcGiul}. On the other hand, a conjecture by Guralnick and Zieve, reported in   \cite{Gunby}, \cite{Aristides1}, and \cite{korch-mont},    states that if $\mathcal{X}$ is ordinary, that is, $\gamma(\mathcal{X})=\gen$, then $|G| \leq c_p\gen^{8/5}$ for some constant $c_p>0$. With the additional hypothesis that $G$ is solvable,  Korchm\'aros and Montanucci proved that  $|G| \leq 34(\gen+1)^{3/2}$ \cite{korch-mont}.

Information on the  $p$-rank $\gamma(\mathcal{X})$ contributes to the study of  other important notions, such as supersingularity, zeta function, and the $a$-number $a(\mathcal{X}) \in \{0, \ldots, \gen-\gamma(\mathcal{X})\}$, which is  another birational invariant \cite{Achter-Pries1}, \cite{Pries-et-al},\cite{Pietro}, \cite{OORT}.

%\textcolor{red}{citar[DOI: 10.1093/imrn/rnn053]} \textcolor{blue}{, \cite {Achter-Pries2}}.

%{\color{red}Information about $p$-rank also contributes to the study of the $a$-number, which is another birational invariant that arises as the dimension of the vector space of exact holomorphic differentials. This relationship is well established in the literature \cite{Pries-et-al, Pietro}.}

The characterization of the $p$-rank for a few curves can be found in the literature, of which the case of elliptic curves is well known \cite{SILV}.
It is also known that the $p$-rank of a curve $\mathcal{X}$ is equal to its Hasse-Witt invariant, and then  $\gamma(\mathcal{X})$ can be investigated through the Hasse-Witt matrix of $\mathcal{X}$. In 1972, Miller studied a family of hyperelliptic curves for which this matrix is invertible \cite{Miller}.
%In 1972, Miller studied a family of hyperelliptic curves for which the Hasse-Witt matrix is invertible \cite{Miller}. 
When $\mathcal{X}$ is a Fermat or hyperelliptic curve, Yui provided conditions for which $\gamma(\mathcal{X})=0$ or $\gamma(\mathcal{X})=\gen$ \cite{Yui-hyper,Yu80}, and Kodama and Washio investigated  relationships between the ranks of certain Hasse-Witt matrices  \cite{KodamaW1,KodamaW2}. 
%{\color{red} For prime numbers $p_1 \neq p_2$ and a Fermat (or hyperelliptic) curve $\mathcal{X}_i$, defined over a perfect field of characteristic $p_i$, Kodama and Washio investigated some relationships between the ranks of the Hasse-Witt matrices of $ \mathcal{X}_1$ and $\mathcal{X}_2$ \cite{KodamaW1,KodamaW2}}. 
Some connections between  Hasse-Witt matrices and Weierstrass semigroups were explored by St\"ohr and Viana \cite{Stohr-Viana}. The $p$-rank of the Fermat curves of prime degree $l \neq p$ was studied by Gonz\'alez in \cite{Gonz}, where explicit formulas were obtained for some particular cases. The problem of the existence of hyperelliptic curves with prescribed genus and $p$-rank was solved by Zhu for $p=2$ \cite{ZHU} and by Glass and Pries for $p > 2$ \cite{GlassPries}. In 2010, Bassa and Beelen obtained the $p$-rank of the Fermat curve $\mathcal{F}_{q-1} : y^{q-1}=x^{q-1}+1$, where $q$ is a power of $p$ \cite[Theorem 19]{Bassa}.

Though  restricted to particular families of curves, the aforementioned results are  of absolute interest. For instance, using  these curves along with  the Deuring-Shafarevich formula \cite[Theorem 4.2]{Subrao}, one can determine the $p$-rank of several other curves that arise as their Galois $p$-power covers. This approach was used by Giulietti, Korchm\'aros, and Timpanella to compute the $p$-rank of the Dickson-Guralnick-Zieve curve 
\begin{equation*}\Cc: \dfrac{(x^{q^3}-x)(y^q-y)-(x^q-x)(y^{q^3}-y)}{(x^{q^2}-x)(y^q-y)-(x^q-x)(y^{q^2}-y)} = 0\end{equation*}
in terms of $\gamma(\mathcal{F}_{q-1})$ \cite[Remark 4]{KGT}. %,  and consequently $\gamma(\Cc) = q^4((p+1)/2)^h -q^4-q^3+q+1>0$, where $q=p^h$ (see \cite{Bassa} and Theorem \ref{+formulas-thm1}). 
They determined the p-rank of $\Cc$ when $q=p$, concluding that $\Cc$ is an ordinary curve with  $|\aut(\Cc)|\approx c\gen^{8/5}$, that is, $\Cc$ attains the conjectural Guralnick-Zieve bound. It is worth noting  that the Deuring-Shafarevich formula is not the only method  to obtain the $p$-rank of additional curves (see, e.g., Lemma \ref{p-rank-morphism}, Theorem \ref{p-rank-Fnm-kani-rosen}, and Corollary \ref{cor-04022020}).

In \cite[Remark 4]{KGT}, the authors mention that the $p$-rank of $\mathcal{F}_{q-1} : y^{q-1}=x^{q-1}+1$ is known only when $q=p$, although the $p$-rank of this specific Fermat curve was previously obtained in \cite[Theorem 19]{Bassa}. These facts highlight not only the interest in the $p$-rank of Fermat-type curves but also the need for further research and discussion. %

For integers $n, m \geq 2$   coprime to $p$, this work determines the p-rank of the curves $y^m = x^n + 1$ over $\Kk=\bar{\mathbb{F}}_p$. More precisely, it proves the following main results.
\begin{theorem}\label{teo-intro}
	Let $n, m\geq 2$  be integers coprime to $p$, and let $h,\alpha,\beta \in \mathbb{Z}_{>0}$ be such that $\alpha n = \beta m = p^h-1$. Then the $p$-rank of the curve $\mathcal{F}_{m,n}: y^{m} = x^{n} + 1$ is given by 
	\begin{equation*}\gamma(\mathcal{F}_{m,n})=\#T-(m+n+\gcd(m,n)),\end{equation*}
	where  
	%\begin{small}
	\begin{equation*}
	T = \left\{ (i,j) \in \Na^2 \mid  0 \leq i\alpha \leq j\beta \leq p^h-1 \; \mbox{and}\; \dbinom{j\beta}{i\alpha} \not\equiv 0 \pmod{p} \right\}.
	\end{equation*}
	%\end{small}
\end{theorem}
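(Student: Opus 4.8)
The plan is to express $\gamma(\mathcal{F}_{m,n})$ as the dimension of the stable image of the Cartier operator $\mathcal{C}$ on $H^0(\mathcal{F}_{m,n},\Omega^1)$, namely $\gamma(\mathcal{F}_{m,n})=\dim_{\Kk}\bigcap_{\ell\ge 1}\mathcal{C}^{\ell}\!\left(H^0(\mathcal{F}_{m,n},\Omega^1)\right)$, and then to diagonalize $\mathcal{C}^h$ in an explicit basis. Since $\gcd(mn,p)=1$, integers $h,\alpha,\beta$ as in the statement exist and $\mathcal{F}_{m,n}$ is defined over $\Fa_p$; it will therefore suffice to show that $\mathcal{C}^h$ is diagonal in a basis with eigenvalues in $\Fa_p$, for then every $\mathcal{C}^{h\ell}$ is diagonal with the same support and the intersection above stabilizes after $h$ steps. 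The basis I would use is $\omega_{s,t}:=x^{s-1}y^{-t}\,dx$ with $(s,t)$ ranging over $B_0:=\{(s,t)\in\Na^2:\ 1\le t\le m-1,\ ms<tn\}$; that these differentials are regular and span $H^0(\Omega^1)$ is a Riemann--Hurwitz/valuation computation (the $n$ affine branch points, where $x$ is totally ramified of index $m$, together with the $\gcd(m,n)$ places over $x=\infty$, at each of which $x$ has a pole of order $m/\gcd(m,n)$ and $y$ a pole of order $n/\gcd(m,n)$), and one checks $\#B_0=\gen$.

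The computational core is the evaluation of $\mathcal{C}^h(\omega_{s,t})$. From $y^m=x^n+1$ one gets $y^{\,p^h-1}=(x^n+1)^{\beta}$, hence $y^{-t}=(y^{-t})^{p^h}(x^n+1)^{t\beta}$, so that
\[
\omega_{s,t}=(y^{-t})^{p^h}\left(\sum_{k=0}^{t\beta}\binom{t\beta}{k}x^{s-1+nk}\right)dx .
\]
Using $\mathcal{C}^h(f^{p^h}\eta)=f\,\mathcal{C}^h(\eta)$ together with the fact that on differentials of $\Kk(x)$ the operator $\mathcal{C}^h$ sends $x^{j}\,dx$ to $x^{(j+1)/p^h-1}\,dx$ when $p^h\mid j+1$ and to $0$ otherwise, only the term with $s+nk\equiv 0\pmod{p^h}$ survives; since $n\alpha\equiv -1\pmod{p^h}$, this forces $k\equiv s\alpha\pmod{p^h}$, and the bounds $0<s\alpha<p^h$ and $t\beta<p^h$ leave $k=s\alpha$ as the only admissible index, present exactly when $s\alpha\le t\beta$. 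As $s(1+n\alpha)=sp^h$, the $x$-exponent collapses back to $s-1$, giving
\[
\mathcal{C}^h(\omega_{s,t})\equiv \binom{t\beta}{s\alpha}\,\omega_{s,t}\pmod p ,
\]
with the convention that the binomial coefficient is $0$ when $s\alpha>t\beta$. Hence $\mathcal{C}^h$ is diagonal on $\{\omega_{s,t}\}_{(s,t)\in B_0}$, and $\gamma(\mathcal{F}_{m,n})=\#B$, where $B:=\{(s,t)\in B_0:\ \binom{t\beta}{s\alpha}\not\equiv 0\pmod p\}$.

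Finally, the purely combinatorial step is the identity $\#T=\#B+\bigl(m+n+\gcd(m,n)\bigr)$. Write $d=\gcd(m,n)$, $m'=m/d$, $n'=n/d$, and note that $i\alpha\le p^h-1\iff i\le n$, $j\beta\le p^h-1\iff j\le m$ and $i\alpha\le j\beta\iff im\le jn$; hence $B=T\cap\{i\ge 1,\ 1\le j\le m-1,\ im<jn\}$ and $T\setminus B$ is the disjoint union of $E_0:=\{(0,j):0\le j\le m\}$, $E_2:=\{(i,m):1\le i\le n\}$ and $E_3:=\{(n'k,m'k):1\le k\le d-1\}$ --- the last because within $T$ the inequality $im\ge jn$ forces $im=jn$, i.e. $(i,j)=(n'k,m'k)$. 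On each of $E_0,E_2,E_3$ the pertinent binomial coefficient equals $1$ (it is $\binom{j\beta}{0}$, $\binom{p^h-1}{i\alpha}$, and $\binom{k(p^h-1)/d}{k(p^h-1)/d}$, respectively), so all of these points indeed belong to $T$, whence $\#(T\setminus B)=(m+1)+n+(d-1)=m+n+d$, which yields the stated formula.

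The steps I expect to require the most care are pinning down the exact basis $B_0$ --- the regularity condition at infinity is $ms<tn$ rather than the naive ``interior of the Newton triangle'' condition, and this is where $\gcd(m,n)$ genuinely enters --- and the disjointness and exhaustiveness of the decomposition $T\setminus B=E_0\sqcup E_2\sqcup E_3$; the manipulation diagonalizing $\mathcal{C}^h$, though it is the engine of the argument, is routine once the identity $y^{\,p^h-1}=(x^n+1)^{\beta}$ is noticed.
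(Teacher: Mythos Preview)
Your proof is correct and follows essentially the same route as the paper: both diagonalize $\mathcal{C}^h$ on the standard basis $\omega_{s,t}=x^{s-1}y^{-t}\,dx$ via the identity $y^{-t}=(y^{-t})^{p^h}(x^n+1)^{t\beta}$, obtain $\mathcal{C}^h(\omega_{s,t})=\binom{t\beta}{s\alpha}\omega_{s,t}$, and then perform the same combinatorial bookkeeping $\#T=\#B+(m+n+d)$. The only cosmetic difference is that the paper reads off the $p$-rank from the eigenbasis using the decomposition $\Delta^{(1)}=\Delta^{\log}\oplus\Delta^{0}$ (its Theorem~\ref{prop-2201}), whereas you invoke the stable-image description $\gamma=\dim\bigcap_\ell \mathcal{C}^\ell(H^0(\Omega^1))$; these are equivalent once $\mathcal{C}^h$ is diagonal. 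One small slip: your definition of $B_0$ should include $s\ge 1$ (otherwise $\omega_{0,t}=x^{-1}y^{-t}\,dx$ is not regular at $x=0$); you tacitly restore this when you write $B=T\cap\{i\ge 1,\dots\}$, so the final count is unaffected.
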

Note that the cardinality of $T$ can be easily calculated for small values of $n$ and $m$, and even when such values are larger, the task can be computationally simple.  
%In the generality of our hypotheses in Theorem \ref{teo-intro}, one should  not expect to provide a general formula for the cardinality of $T$; this reflects the intrinsic difficulty of the general problem of computing p-ranks. 
Beyond this, Theorem \ref{teo-intro} effectively provides  explicit formulas
for the $p$-rank of  curves of type $\mathcal{F}_{m,n}$. To illustrate its reach, we provide
Tables  1---5 % \ref{tab-Fmn} \ref{tab-Fmn1}, \ref{tab-Cn1}, \ref{tab-Cn2} and \ref{tab-Dn} below summarize a few cases we have worked out to arrive at explicit formulas.
below, summarizing a few cases.

%\FloatBarrier
\begin{table}[ht]%[width=1\textwidth,cols=4,pos=ht]
	\centering
	\caption{The $p$-rank of $y^{m_1} = x^{n_1} + 1$. Throughout the table, $d=\gcd(m,n)$.}\label{tab-Fmn}	
	\scalebox{0.76}{\begin{tabular}{cccc}%{\tblwidth}{@{}CCCC@{}}
			\toprule
			$m_1$ & $n_1$ & $\gamma(\mathcal{F}_{m_1,n_1})$ & Theorem \\ 
			\midrule
             $\begin{array}{c} m, \\ m \mid p-1 \end{array}$& $\begin{array}{c} n, \\ n \mid p+1 \end{array}$ & $2\sum_{i=\left \lceil{  m/2}\right \rceil }^{m}  \left \lfloor{\frac{  i(p-1)/m+1   }{(p+1)/n}}\right \rfloor - (m- \left \lceil{ m/2}\right \rceil +2)  (n-1)+1-d$ & \ref{p+1,p-1} \vspace{0.2cm}\\
             $\begin{array}{c} m, \\ m \mid p-1 \end{array}$& $p^n-1$ & $\sum_{i=1}^{m-1} ( i(p-1)/m + 1)^n - 2(m-1)$ & \ref{c-m,p^h-1} \vspace{0.2cm}\\
			$p^2+p+1$ & $p^2+p+1$ & $p(p+1)(p^2 + p + 2)/8$ & \ref{h=3-26072022} \vspace{0.3cm}\\
			$p^2+p+1$ & $p^3-1$ & $p(p+1)(p^3 + 2p^2 + 3p - 14)/8$ & \ref{h=3-26072022} \vspace{0.3cm}\\
			$(p^{n}-1)/2$ & $(p^{n}-1)/2$  &   $(p+1)^n(p^n+3)/2^{n+2}-3(p^n-1)/2$ & \ref{cor-12032020a} \vspace{0.3cm}\\
			$(p^{n}-1)/2$ & $p^{n}-1$  &   $(p+1)^n(p^n+1)/2^{n+1} - 2(p^n-1)$ & \ref{cor-12032020a} \vspace{0.3cm}\\
			$p^m-1$ & $p^n-1$ & $\Big(\sum_{i=0}^{p-1} \big((i+1)^{\frac{m}{d}}-i^{\frac{m}{d}}\big)(p-i)^{\frac{n}{d}}\Big)^d - (p^m+p^n+p^d-3)$ & \ref{+formulas-thm1} \vspace{0.3cm}\\	
			$\begin{array}{c} p^m+1, \\ p \neq 2 \end{array}$ & $\begin{array}{c} p^n-1, \\ n/d\;\mbox{even} \end{array}$  &  $\Big(\sum_{i=0}^{p-1} \big((p-i) (i+1)\big)^{\frac{n}{2d}} + \hspace{-0.3cm} \sum\limits_{1 \leq j \leq i \leq p-1} \hspace{-0.3cm} \big((j+1)^{\frac{m}{d}}-2j^{\frac{m}{d}}+(j-1)^{\frac{m}{d}}\big)\big((p-i) (i-j+1)\big)^{\frac{n}{2d}}  \Big)^d - (p^{m}+p^d)$ & \ref{+formulas-thm3} \vspace{0.3cm}\\	 
			$\begin{array}{c} p^m+1, \\ p \neq 2 \end{array}$ & $\begin{array}{c} p^n-1, \\ n/d\;\mbox{odd} \end{array}$ & $\dfrac{1}{2^n}\Big((p+1)^{\frac{n}{d}} + \sum_{i=1}^{(p-1)/2} \big((2i+1)^{\frac{m}{d}}-(2i-1)^{\frac{m}{d}}\big)(p-2i+1)^{\frac{n}{d}}\Big)^d - (p^m+1)$ & \ref{+formulas-thm2} \\ 
			\bottomrule
	\end{tabular}}
\end{table}

%\FloatBarrier

%\FloatBarrier

\begin{table}[ht]%[width=.8\textwidth,cols=4,pos=ht]
	\centering
	\caption{The $p$-rank of $y^{m_1} = x^{n_1} + 1$}\label{tab-Fmn1}
	\begin{tabular}{cccc}%{\tblwidth}{@{}CCCC@{}}
		\toprule
		$m_1$ & $n_1$ & $\gamma(\mathcal{F}_{m_1,n_1})$ &  \\ 
		\midrule
		$\begin{array}{c} m_1 \mid 2^{m}-1, \\ p=2 \end{array}$ & $\begin{array}{c} n_1 \mid 2^{n}-1, \\ \gcd(n,m)=1  \end{array}$ & $0$ & Corollary \ref{+formulas-thm1-cor1} \vspace{0.3cm}\\ 
		$m_1 \mid p^{m}+1$ & $n_1 \mid p^{n}+1$ & $0$ & Theorem \ref{+formulas-thm4} \vspace{0.3cm}\\  
		$\begin{array}{c} m_1 \mid 2^{m}+1,\\ p=2 \end{array}$ & $\begin{array}{c} n_1 \mid 2^{n}-1, \\ n/\gcd(m,n)\;\mbox{odd} \end{array}$ & $0$ & Corollary \ref{16112022-1} \\ 
		\bottomrule
	\end{tabular}
\end{table}

%\FloatBarrier

\begin{table}[ht]%[width=.6\textwidth,cols=2,pos=ht]
	\centering
	\caption{The $p$-rank of $y^2=x^n+1$ -- Corollary \ref{corollary-p-rank-c1a}.}\label{tab-Cn1}
	\begin{tabular}{cc}%{\tblwidth}{@{}CC@{}}
		\toprule
		$n$ &  $\gamma(\mathcal{F}_{2,n})$  \\ 
		\midrule                             
		%$p^r-1$ &  $\Big( (p+1)/2\Big)^r-2$  \vspace{0.1cm}\\
		$2(p^r-1)$ &  $\Big( (p+1)/2\Big)^r-2$ \vspace{0.1cm}\\                         
		$2(p^r+1)$ &  $\Big( (p+1)/2\Big)^r$ \vspace{0.1cm}\\
		$p^{2r}+p^r+1$ &   $\Big( (p+3)(p+1)/8\Big)^r$  \vspace{0.1cm}\\  
		$2(p^{2r}+p^r+1)$ &   $2\Big( (p+3)(p+1)/8\Big)^r$  \vspace{0.1cm}\\   
		$p^{3r}-p^{2r}+p^r-1$ &  $\Big( (p^2+2p+3)(p+1)/12\Big)^r-2$  \\
		\bottomrule 
	\end{tabular}
\end{table}

\FloatBarrier

Despite the vanishing of their $p$-ranks, note that most curves in Table \ref{tab-Fmn1} are not supersingular, see Remark \ref{supersing}.
To complement Table \ref{tab-Cn1}, let us mention that in general  the $p$-rank of $\mathcal{F}_{2,n}: y^{2} = x^{n} + 1$ is given by
\begin{equation}\label{F2n}
\gamma(\mathcal{F}_{2,n}) = \dfrac{n}{p^h-1}  \Big(((p+1)/2)^h - \delta_n \Big),\end{equation} 
for a suitable  integer $\delta_n \leq ((p+1)/2)^h$ dependent on $\alpha =(p^h-1)/n \in \Za_{>0}$ (Theorem \ref{p-rank-28-06-2022}). Table \ref{tab-Cn2} presents some values of $\delta_n$.

%\FloatBarrier
\begin{table}[ht]%[width=.5\textwidth,cols=3,pos=ht]
	\centering
	\caption{$\delta_n$ for $y^2=x^n+1$ -- Theorem \ref{p-rank-28-06-2022}.}\label{tab-Cn2}
	\begin{tabular}{ccc}%{\tblwidth}{@{}CCC@{}}
		\toprule
		& \multicolumn{2}{c}{$\delta_n$}\\
		\cline{2 - 3} % linha horizontal entre as colunas
		% 2 e 4
		hypotheses & $n$ odd & $n$ even \\
		\midrule
		$\alpha \mid \frac{p-1}{2}$  & not applicable & $\alpha+1$   \vspace{0.3cm}\\ 
		$\alpha \mid \frac{p+1}{2}$  & $\alpha$ & $2\alpha$  \vspace{0.3cm}\\    
		$\begin{array}{c}  \alpha \mid  \frac{p+3}{2}, \\ (p-1) \nmid n  \end{array}$  & $\alpha+(-1)^h$ & $2\alpha+(-1)^h$  \vspace{0.3cm}\\  
		$\begin{array}{c}  \alpha \mid  \frac{p+3}{2}, \\ (p-1) \mid n\end{array}$  & $\alpha+(-1)^h(1-\alpha)$ & $2\alpha+(-1)^h(1-\alpha)$  \vspace{0.3cm} \\    
		$\alpha \mid p-1$  & $(\alpha+2)/2$ &   \\
		\bottomrule  
	\end{tabular}
\end{table} 
%\FloatBarrier
The curve $\mathcal{F}_{2,n}:y^2=x^{r\frac{p^{h}-1}{p-1}}+1$, where $r$ is an even divisor of $p-1$, typifies the first line of Table \ref{tab-Cn2}. In this case, we have $\delta_n=\alpha+1=(p-1) / r+1$, and then \eqref{F2n} gives
 $$\gamma(\mathcal{F}_{2,n})=\frac{r}{p-1}\left(((p+1)/2)^h-1\right)-\frac{r}{p-1}.$$
 In particular,  the curve $y^2=x^{p^{h}-1}+1$ has $p$-rank  $((p+1)/2)^r-2$, a fact recently proved by Pries and Ulmer in \cite[Proposition 10.3]{PRIES-ULMER}. Note that this  also follows from  the second line of Table \ref{tab-Fmn}.

As a by-product of the previous results, one can obtain the  $p$-rank of  other types of curves. This can be illustrated   by  the following curves $$\mathcal{D}_n: y^2=x(x^{n}+1).$$ If $n=(p^h-1)/\alpha \in \Za_{>0}$ is odd, then $\gamma(\mathcal{D}_n) = \gamma(\mathcal{C}_n)$ (Corollary \ref{cor-04022020}). 
Otherwise, from  Kani and Rosen's result in Theorem \ref{teo-13032020}, it follows that  $\gamma(\mathcal{D}_n)=0$ when $\alpha$ is odd, and
\begin{equation*}\gamma(\mathcal{D}_n) = \gamma(\mathcal{F}_{2,2n}) - \gamma(\mathcal{F}_{2,n}) = \dfrac{n}{p^h-1} \Big(( (p+1)/2 )^h - \tilde{\delta}_n\Big),\end{equation*} 
with $\tilde{\delta}_n = 2\delta_{2n} - \delta_{n}$,  when $\alpha$ is even (Theorem \ref{tab5-24092022}). 
Some values of $\tilde{\delta}_n$ are displayed in Table \ref{tab-Dn}.  

%\FloatBarrier  

\begin{table}[ht]%[width=.4\textwidth,cols=2,pos=ht]
	\centering
	\caption{$\tilde{\delta}_n$ for $y^2=x(x^n+1)$ -- Theorem \ref{tab5-24092022}.}\label{tab-Dn}
	\begin{tabular}{cc}%{\tblwidth}{@{}CC@{}}
		\toprule
		hypotheses &  $\tilde{\delta}_n$ \\
		\midrule
		$\alpha \mid \frac{p-1}{2}$    & $1$  \vspace{0.3cm}\\ 
		$\alpha \mid \frac{p+1}{2}$    & $0$   \vspace{0.3cm}\\  
		$\begin{array}{c}  \alpha \mid  \frac{p+3}{2}, \;\mbox{and} \\ (p-1) \nmid 2n \;\mbox{or}\\ (p-1) \mid n  \end{array}$  & $(-1)^h$  \vspace{0.3cm}\\   
		$\begin{array}{c}  \alpha \mid  \frac{p+3}{2}, \\ (p-1) \mid 2n \;\mbox{and}\\ (p-1) \nmid n  \end{array}$   & $(-1)^h(1-\alpha)$   \\
		\bottomrule   
	\end{tabular}
\end{table} 

%\FloatBarrier

This paper is organized as follows. In Section \ref{sec1}, we establish notation and collect some preliminary results. In Section \ref{sec2}, we prove the main result. In Sections \ref{sec3} and \ref{sec4}, we use the results of Section \ref{sec2} to present closed formulas for the $p$-rank of several Fermat-type curves. 
Finally in Section \ref{sec5}, we illustrate how the results of the previous sections can be used to obtain  the $p$-rank of other types of curves.

\section*{Notation}

The following notation is used throughout this text.
\begin{itemize}
	\item $\Fa_p$ is the prime field of characteristic $p>0$
	\item $\Kk=\bar{\mathbb{F}}_p$   is the algebraic closure of $\Fa_p$
	%\item $\Fa_q$ is the finite field with $q=p^h$ elements
	%\item $J_{\mathcal{X}}$ is the jacobian of the curve $\mathcal{X}$
	\item $\mathcal{F}_{m,n}$ is the curve $y^m=x^n+1$ over $\Kk$, where $m$ and $n$ are not divisible by $p$
	\item $\mathcal{C}_{n}$ is the curve $\mathcal{F}_{2,n}$
	\item $\mathcal{D}_{n}$ is the curve $y^2=x(x^n+1)$  over $\Kk$, where $p>2$  does not divide $n$.
	%	\item $\mathcal{D}_{n}$ is the curve $y^2=x^n+x$ defined over $\Fa_q$, where $q$ is odd and $n-1$ is a positive integer not divisible by $p$
	\item $\gamma(\mathcal{C})$ is the $p$-rank of the curve $\mathcal{C}$
	\item $\mathbb{N}=\{0,1,2,\ldots\}$ is the set of natural numbers
	\item $\Za_{>0}$ is the set of positive integers
	\item $\lfloor x\rfloor=\max \{m \in \mathbb{Z} \mid m \leq x\}$ and $\lceil x\rceil=\min \{m \in \mathbb{Z} \mid m \geq x\}$, where $x$ is a real number
	\item $\llbracket r, s \rrbracket$, where  $r\leq s$ are integers,   is the set $\{r,\ldots,s \}$
	\item  $\llbracket r, s \rrbracket^h$, with $h\in \Za_{>0}$,  is the cartesian product $\llbracket r, s \rrbracket\times \cdots \times  \llbracket r, s \rrbracket\ $ ($h$ times).
\end{itemize}

\section{Preliminaries}\label{sec1}

Let $\mathcal{X}$ be a projective, nonsingular, and irreducible algebraic curve of genus $\gen > 0$ defined over $\Kk$. Let $\Kk(\mathcal{X})=\Kk(x,y)$ be the function field of $\mathcal{X}$, where $x \in \Kk(\mathcal{X})$ is a separating variable. 
Let $\Delta_{\mathcal{X}} = \{ \varphi dx : \varphi \in \Kk(\mathcal{X})\}$ be the module of differentials of $\Kk(\mathcal{X})$. For any $\omega \in \Delta_{\mathcal{X}}$, we say that

\begin{itemize}
	\item $\omega$ is holomorphic if $\rm{div}(\omega)$ is effective
	
	\item $\omega$ is exact if $\omega = df$ for some $f \in \Kk(\mathcal{X})$
	
	\item $\omega$ is logarithmic if $\omega = df/f$ for some $f \neq 0$ in $\Kk(\mathcal{X})$.
\end{itemize}

The Cartier operator $C :  \Delta_{\mathcal{X}} \to \Delta_{\mathcal{X}}$ is defined as follows. For each $\omega \in \Delta_{\mathcal{X}}$, let $f_0, \ldots, f_{p-1} \in \Kk(\mathcal{X})$ be such that $\omega = (f_0^p + f_1^px + \cdots + f_{p-1}^px^{p-1})dx$, and set
\begin{equation*}%\label{cartier-operator-definition}
C(\omega) := f_{p-1} dx.
\end{equation*}
This operator has the following properties:
\begin{enumerate}[\rm (i)]
	
	\item The value of $C(\omega)$ does not depend on the choice of $x$
	
	\item $C$ is $1/p$-linear, that is, $C$ is additive and $C(f^p \omega) = f C(\omega)$ for all $f \in \Kk(\mathcal{X})$
	
	\item $C(f^{p-1}df) = df$ for all $f \in \Kk(\mathcal{X})$
	
	\item $C(\omega)=0$ if and only if $\omega$ is exact
	
	\item $C(\omega) = \omega$ if and only if $\omega$ is logarithmic
	
	\item If $\omega$ is holomorphic, then so is  $C(\omega)$.
	
\end{enumerate}
For  $n, i \in \Za_{>0}$, using the base-$p$ expansion of $i$ and the previous properties, one can easily check that
\begin{equation}\label{Theo1}
C^n (x^i dx) = \left\{
\begin{array}{ll}
0 & \text{if\;} i \not\equiv -1 \pmod{p^n} \\
x^{s-1} dx & \text{if\;} i+1 = sp^n,
\end{array}
\right.
\end{equation}
where  $C^n = C \circ \cdots \circ C$ ($n$ times). For further details, see \cite{GarSaeed}, \cite{KORCH}, and \cite[\textsection 10 and \textsection 11]{Serre}.

Let $\Delta^{(1)}_{\mathcal{X}}$ be the $\gen$-dimensional $\Kk$-vector space of the holomorphic differentials on $\mathcal{X}$, and let  $\Delta^{\textrm{ log}}_{\mathcal{X}}$ be the subspace of $\Delta^{(1)}_{\mathcal{X}}$ spanned by the logarithmic differentials.

\begin{definition}\label{definition-hasse-witt-invariant-270120}
	The dimension $\gamma(\mathcal{X})$ of $\Delta^{\textrm{ log}}_{\mathcal{X}}$ is the \textit{$p$-rank} of $\mathcal{X}$. 
\end{definition}

Let  $\Delta^{0}_{\mathcal{X}}$ be the subspace  of the differentials $\omega \in \Delta^{(1)}_{\mathcal{X}}$ for which $C^n(\omega)=0$ for some integer $n>0$. 
Since the Cartier operator is $1/p$-linear, the following canonical decomposition  holds \cite{Hasse-Witt}.
\begin{equation}\label{decoposicao-espaco-diferenciais-holomorfas}
\Delta^{(1)}_{\mathcal{X}} = \Delta^{\textrm{ log}}_{\mathcal{X}} \oplus \Delta^{0}_{\mathcal{X}}.
\end{equation}

\begin{theorem}\label{prop-2201}
	Let $\omega \in \Delta^{(1)}_{\mathcal{X}}$ be a nonzero holomorphic differential. If $h \in \Za_{>0}$ is such that $C^h(\omega)=\alpha \omega$ for some $\alpha \in \Kk$,  then
	$\omega \in \Delta^{\textrm{ log}}_{\mathcal{X}}$ if and only if $\alpha \neq 0$. 
	In particular, for any basis $B = \{\omega_1, \ldots, \omega_{\gen}\}$ of $\Delta^{(1)}_{\mathcal{X}}$, where $C^h(\omega_i) = \alpha_i \omega_i$, $\alpha_i \in \Kk$, we have
	\begin{equation*}\gamma(\mathcal{X}) = \#\{\omega_i \in B \mid   i \in \llbracket1, \gen\rrbracket \mbox{\;and\;} \alpha_i \neq 0\}.\end{equation*}
\end{theorem}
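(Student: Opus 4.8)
The plan is to read the statement off the canonical decomposition \eqref{decoposicao-espaco-diferenciais-holomorfas} by understanding how $C$ acts on its two summands. The first step is to check that $\Delta^{\textrm{ log}}_{\mathcal{X}}$ and $\Delta^{0}_{\mathcal{X}}$ are each stable under $C$, hence under every power $C^k$. For $\Delta^{0}_{\mathcal{X}}$ this is immediate from the definition: $C^n(\omega_0)=0$ gives $C^n(C(\omega_0))=C^{n+1}(\omega_0)=0$. For $\Delta^{\textrm{ log}}_{\mathcal{X}}$ one uses that $\Kk$ is perfect together with the $1/p$-linearity of $C$: if $\omega_1=\sum_j c_j\lambda_j$ with the $\lambda_j$ logarithmic, then $C(\omega_1)=\sum_j c_j^{1/p}\lambda_j$ is again a $\Kk$-linear combination of logarithmic differentials, so it lies in $\Delta^{\textrm{ log}}_{\mathcal{X}}$.

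Granting this, the ``only if'' direction is easy: if $\alpha=0$ then $C^h(\omega)=0$, so $\omega\in\Delta^{0}_{\mathcal{X}}$ by definition, and since $\omega\neq 0$ and the sum in \eqref{decoposicao-espaco-diferenciais-holomorfas} is direct, $\omega\notin\Delta^{\textrm{ log}}_{\mathcal{X}}$. For the ``if'' direction I would assume $\alpha\neq 0$ and write $\omega=\omega_1+\omega_0$ with $\omega_1\in\Delta^{\textrm{ log}}_{\mathcal{X}}$, $\omega_0\in\Delta^{0}_{\mathcal{X}}$. Applying $C^h$ and using the stability above, $C^h(\omega_1)\in\Delta^{\textrm{ log}}_{\mathcal{X}}$ and $C^h(\omega_0)\in\Delta^{0}_{\mathcal{X}}$; comparing with $\alpha\omega=\alpha\omega_1+\alpha\omega_0$ and invoking the uniqueness of the decomposition yields $C^h(\omega_0)=\alpha\omega_0$. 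Iterating $C^h$ and using that it is $1/p^h$-linear, one obtains $C^{kh}(\omega_0)=\beta_k\,\omega_0$ for every $k\geq 1$, where $\beta_k\in\Kk$ is a product of $\alpha$ with various $p^h$-power roots of $\alpha$ and is therefore nonzero. On the other hand, $\omega_0\in\Delta^{0}_{\mathcal{X}}$ forces $C^{kh}(\omega_0)=0$ once $kh$ is large enough, so $\omega_0=0$ and $\omega=\omega_1\in\Delta^{\textrm{ log}}_{\mathcal{X}}$.

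For the ``in particular'' statement, I would partition $B=B_1\sqcup B_0$ with $B_1=\{\omega_i\mid\alpha_i\neq 0\}$ and $B_0=\{\omega_i\mid\alpha_i=0\}$. By the first part, $B_1\subseteq\Delta^{\textrm{ log}}_{\mathcal{X}}$ and $B_0\subseteq\Delta^{0}_{\mathcal{X}}$, and both sets are linearly independent, being subsets of a basis; hence $\#B_1\leq\dim\Delta^{\textrm{ log}}_{\mathcal{X}}$ and $\#B_0\leq\dim\Delta^{0}_{\mathcal{X}}$. Since $\#B_1+\#B_0=\gen=\dim\Delta^{\textrm{ log}}_{\mathcal{X}}+\dim\Delta^{0}_{\mathcal{X}}$ by \eqref{decoposicao-espaco-diferenciais-holomorfas}, both inequalities must be equalities, and therefore $\gamma(\mathcal{X})=\dim\Delta^{\textrm{ log}}_{\mathcal{X}}=\#B_1$.

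The argument is mostly bookkeeping, and the one place to be careful is the semilinearity of $C$ in the ``if'' direction: because $C$ is $1/p$-linear rather than linear, the relation $C^h(\omega_0)=\alpha\omega_0$ does not iterate to $C^{kh}(\omega_0)=\alpha^k\omega_0$ but to a relation with Frobenius-twisted scalars, and all one really needs is that those scalars cannot vanish — which is exactly where the hypothesis $\alpha\neq 0$ (and $\Kk$ being a field) is used. Everything else is formal, following from the direct-sum decomposition and the $C$-stability of its two summands.
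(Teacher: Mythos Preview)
Your proposal is correct and follows essentially the same approach as the paper: use the decomposition \eqref{decoposicao-espaco-diferenciais-holomorfas}, invoke the $C$-stability of the summands, and iterate $C^h$ until the $\Delta^{0}_{\mathcal{X}}$-component is killed. The only cosmetic difference is that the paper iterates $C^h$ on $\omega$ itself (obtaining $C^{ht}(\omega)=\beta\omega$ with $\beta\neq 0$, then using $C^{ht}(v)=0$ to conclude $\beta v=C^{ht}(u)-\beta u\in\Delta^{\textrm{log}}_{\mathcal{X}}$), whereas you first peel off the relation $C^h(\omega_0)=\alpha\omega_0$ on the $\Delta^{0}_{\mathcal{X}}$-component and iterate there; the content is the same.
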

\begin{proof} Suppose $\alpha \neq 0$. The decomposition \eqref{decoposicao-espaco-diferenciais-holomorfas} gives $\omega = u+v$, with $u \in \Delta^{\textrm{ log}}_{\mathcal{X}}$ and $v \in \Delta^{0}_{\mathcal{X}}$. Thus  $C^{ht}(v)=0$ for some integer $t>0$, and  $C(\Delta^{\textrm{ log}}_{\mathcal{X}})\subseteq \Delta^{\textrm{ log}}_{\mathcal{X}}$    implies  $C^{ht}(u) \in  \Delta^{\textrm{ log}}_{\mathcal{X}}$. Since $C^h(\omega)=\alpha \omega$, with  $\alpha\neq 0$, the $1/p$-linearity of $C$  yields $C^{ht}(w)=\beta w=\beta(u+v)$ for some $\beta \neq 0$. Hence 
	\begin{equation*}
	\beta v=C^{ht}(w) -\beta u= C^{ht}(u+v) -\beta u =C^{ht}(u) -\beta u \in \Delta^{\textrm{ log}}_{\mathcal{X}}
	\end{equation*}
	gives $v = 0$, and then $\omega = u \in \Delta^{\textrm{ log}}_{\mathcal{X}}$. The converse is clear.
	For the final assertion, note that the basis $B$ for $\Delta^{(1)}_{\mathcal{X}} = \Delta^{\textrm{ log}}_{\mathcal{X}} \oplus \Delta^{0}_{\mathcal{X}}$ is such that 
	\begin{equation*}B_1 = \{\omega_i \in B \mid  i \in \llbracket1, \gen\rrbracket \mbox{\;and\;} \alpha_i \neq 0\} \subseteq \Delta^{\textrm{ log}}_{\mathcal{X}}\end{equation*} 
	and $B \setminus B_1 \subseteq \Delta^{0}_{\mathcal{X}}$. Therefore,  $B_1$ is a basis of $\Delta^{\textrm{ log}}_{\mathcal{X}}$, and then $\gamma(\mathcal{X})=\#B_1$.
\end{proof}

Let us recall that the $p$-rank  $\gamma (J)$ of a jacobian $J$, more generally, of an abelian variety,  is invariant under isogeny, and  $\gamma(J_1 \times J_2) = \gamma(J_1) + \gamma(J_2)$ for jacobians  $J_1$, $J_2$ defined over $\Kk$ \cite{Geer,Mum}. In particular, the following result  is suitable   to investigate the $p$-rank of a curve $\mathcal{X}$.

\begin{theorem}[\cite{KaniRosen}, Theorem B]\label{teo-13032020}
	Let $G \leq \aut(\mathcal{X})$ be a (finite) subgroup such that $G = H_1 \cup \ldots \cup H_t$, where the subgroups $H_i \leq G$ satisfy $H_i \cap H_j = \{1\}$ for $i \neq j$. Then we have the isogeny relation
	\begin{equation*}J_{\mathcal{X}}^{t-1} \times J_{\mathcal{X}/G}^g \sim J_{\mathcal{X}/H_1}^{h_1} \times \cdots \times J_{\mathcal{X}/H_t}^{h_t},\end{equation*}
	where $g = |G|$, $h_i = |H_i|$, and $J^n = J \times \cdots \times J$ ($n$ times).
\end{theorem}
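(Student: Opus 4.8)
The plan is to reproduce the argument of Kani and Rosen: the statement is powered by a combinatorial identity in the integral group ring $\Za[G]$, which is then transported into the semisimple algebra $\mathrm{End}^0(J_{\mathcal{X}}):=\mathrm{End}(J_{\mathcal{X}})\otimes\mathbb{Q}$ through the action of $G$ on the jacobian. First I would record the group-ring identity. For a subgroup $H\le G$ write $\sigma_H=\sum_{h\in H}h\in\Za[G]$ and $\varepsilon_H=\frac{1}{|H|}\sigma_H$, an idempotent of $\mathbb{Q}[G]$. The hypothesis $G=H_1\cup\dots\cup H_t$ with $H_i\cap H_j=\{1\}$ for $i\ne j$ says precisely that every element $\ne 1$ of $G$ lies in exactly one $H_i$, whereas $1$ lies in all $t$ of them; summing the $\sigma_{H_i}$ therefore gives
\begin{equation*}
\sum_{i=1}^{t}\sigma_{H_i}=(t-1)\cdot 1+\sigma_G,
\qquad\text{equivalently}\qquad
\sum_{i=1}^{t}h_i\,\varepsilon_{H_i}=(t-1)\,\varepsilon_{\{1\}}+g\,\varepsilon_G ,
\end{equation*}
an identity among idempotents of $\mathbb{Q}[G]$ with nonnegative integer coefficients (here $\varepsilon_{\{1\}}=1$). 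For $t=1$ this degenerates to $g\varepsilon_G=g\varepsilon_G$ and the theorem is vacuous, so I would assume $t\ge 2$.

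Next I would set up the geometric dictionary. The $G$-action on $\mathcal{X}$ induces an action of $G$ on $J_{\mathcal{X}}$ (by functoriality of $\mathrm{Pic}^0$), hence a $\mathbb{Q}$-algebra homomorphism $\mathbb{Q}[G]\to\mathrm{End}^0(J_{\mathcal{X}})$, and I claim the image of each $\varepsilon_H$ is an idempotent whose image abelian subvariety is isogenous to $J_{\mathcal{X}/H}$. This comes from the quotient map $\pi_H\colon\mathcal{X}\to\mathcal{X}/H$, of degree $|H|$: on jacobians $\pi_{H,*}\circ\pi_H^*=[\,|H|\,]$ on $J_{\mathcal{X}/H}$, while $\pi_H^*\circ\pi_{H,*}=|H|\,\varepsilon_H$ as endomorphisms of $J_{\mathcal{X}}$. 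The first identity shows $\pi_H^*$ is an isogeny onto its image; the second identifies that image with the image abelian subvariety $\varepsilon_H(J_{\mathcal{X}})$ of $\varepsilon_H$. Hence $\varepsilon_H(J_{\mathcal{X}})\sim J_{\mathcal{X}/H}$ for every $H\le G$, with $\varepsilon_{\{1\}}(J_{\mathcal{X}})=J_{\mathcal{X}}$ and $\varepsilon_G(J_{\mathcal{X}})\sim J_{\mathcal{X}/G}$ (a genus-zero quotient contributing the trivial abelian variety).

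The linking step is to invoke the general principle that is the real content of the Kani--Rosen machinery: for any abelian variety $A$, any relation $\sum_k a_k\varphi_k=\sum_l b_l\psi_l$ with $a_k,b_l\in\Za_{\ge 0}$ among idempotents $\varphi_k,\psi_l\in\mathrm{End}^0(A)$ forces an isogeny $\prod_k(\varphi_k A)^{a_k}\sim\prod_l(\psi_l A)^{b_l}$. Applying this to the image in $\mathrm{End}^0(J_{\mathcal{X}})$ of the Step 1 identity and substituting the dictionary of Step 2, one obtains
\begin{equation*}
\prod_{i=1}^{t}J_{\mathcal{X}/H_i}^{\,h_i}\ \sim\ J_{\mathcal{X}}^{\,t-1}\times J_{\mathcal{X}/G}^{\,g},
\end{equation*}
which is the assertion.

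The hard part is precisely this general principle; everything else is bookkeeping. It cannot follow from a naive numerical count, since an idempotent of $\mathrm{End}^0(A)$ is not pinned down, even up to conjugacy, by its trace on a Tate module. I would prove it through the structure theory. By Poincar\'e complete reducibility, $A\sim\prod_j B_j^{m_j}$ with the $B_j$ simple and pairwise non-isogenous, so $\mathrm{End}^0(A)\cong\prod_j M_{m_j}(D_j)$ with $D_j=\mathrm{End}^0(B_j)$ division algebras, and both the relation and the operation ``idempotent $\mapsto$ image abelian subvariety'' respect this product decomposition; hence it suffices to work in one factor $M_m(D)=\mathrm{End}^0(B^m)$ with $B$ simple. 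There every idempotent is conjugate, by a unit of $M_m(D)$ (equivalently, by a self-isogeny of $B^m$), to a standard diagonal idempotent of some rank $r$, and conjugate idempotents have isogenous images, so the image abelian subvariety of an idempotent $e$ is isogenous to $B^{\mathrm{rank}(e)}$. Finally, applying the reduced trace to the relation $\sum_k a_k\varphi_k=\sum_l b_l\psi_l$ in $M_m(D)$ gives $\sum_k a_k\,\mathrm{rank}(\varphi_k)=\sum_l b_l\,\mathrm{rank}(\psi_l)$, so the two sides produce isogenous powers of $B$; taking the product over $j$ completes the argument. Alternatively, one may simply cite \cite{KaniRosen}.
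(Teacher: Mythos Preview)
The paper does not prove this theorem; it is quoted verbatim as Theorem~B of Kani--Rosen \cite{KaniRosen} and used as a black box. Your proposal is a faithful and correct outline of the original Kani--Rosen argument (group-ring identity $\sum_i\sigma_{H_i}=(t-1)\cdot 1+\sigma_G$, the identification $\varepsilon_H(J_{\mathcal{X}})\sim J_{\mathcal{X}/H}$ via $\pi_H^*$ and $\pi_{H,*}$, and the passage from idempotent relations to isogeny relations through Poincar\'e complete reducibility and the structure of $\mathrm{End}^0(A)$ as a product of matrix algebras over division rings), so there is nothing further to compare.
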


The following relates the $p$-rank of a  curve to the $p$-rank of any of its (sub)covers.

\begin{lemma} [\cite{Bassa}, Lemma 6]\label{p-rank-morphism} Let $\mathcal{C}$ and $\mathcal{D}$ be curves of genus $\gen (\mathcal{C})$  and $\gen (\mathcal{D})$, respectively, defined over a finite  field. If $f: \mathcal{C} \longrightarrow \mathcal{D}$ is a nonconstant morphism, then 
	
	\begin{enumerate}[\rm(i)]
		\item  $\gamma(\mathcal{D}) \leq \gamma(\mathcal{C}) \leq \gamma(\mathcal{D})+\gen (\mathcal{C})-\gen (\mathcal{D})$
		\item if $\gamma(\mathcal{C})=0$, then $\gamma(\mathcal{D})=0$
		\item if $\mathcal{C}$ is ordinary, then so is $\mathcal{D}$.
	\end{enumerate}

\end{lemma}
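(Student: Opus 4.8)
The plan is to pass to Jacobians and deduce all three assertions from the elementary behaviour of the $p$-rank on products and isogeny classes of abelian varieties recalled just above. Since the $p$-rank is a geometric invariant, one may first base-change to $\Kk=\bar{\mathbb{F}}_p$. A nonconstant morphism $f:\mathcal{C}\to\mathcal{D}$ induces a homomorphism $f^{*}:J_{\mathcal{D}}\to J_{\mathcal{C}}$ of abelian varieties (pullback of degree-zero divisor classes), and I would first check that $\ker f^{*}$ is finite: if $[D]\in\ker f^{*}$ then $f^{*}D=\operatorname{div}(g)$ for some $g\in\Kk(\mathcal{C})$, and applying $f_{*}$ (pushforward of divisors) gives $(\deg f)\,D=\operatorname{div}\!\big(N_{\Kk(\mathcal{C})/\Kk(\mathcal{D})}(g)\big)$, whence $[D]\in J_{\mathcal{D}}[\deg f]$; note this uses no separability assumption on $f$. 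Thus $f^{*}$ is an isogeny onto its image $A_{0}:=f^{*}(J_{\mathcal{D}})$, so $J_{\mathcal{D}}\sim A_{0}$ and $\dim A_{0}=\gen(\mathcal{D})$, and in particular $\gen(\mathcal{C})\ge\gen(\mathcal{D})$.

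Next I would split $A_{0}$ off up to isogeny, using the relation $f_{*}\circ f^{*}=[\deg f]$ on $J_{\mathcal{D}}$ (equivalently, Poincaré complete reducibility): writing $A$ for the connected component of $\ker f_{*}$ one obtains $J_{\mathcal{C}}\sim A_{0}\times A\sim J_{\mathcal{D}}\times A$ with $\dim A=\gen(\mathcal{C})-\gen(\mathcal{D})\ge 0$. Applying isogeny-invariance and additivity of the $p$-rank on products of abelian varieties then gives
\[
\gamma(\mathcal{C})=\gamma(J_{\mathcal{C}})=\gamma(J_{\mathcal{D}})+\gamma(A)=\gamma(\mathcal{D})+\gamma(A),
\]
and since $0\le\gamma(A)\le\dim A=\gen(\mathcal{C})-\gen(\mathcal{D})$ this is precisely (i). For (ii), $\gamma(\mathcal{C})=0$ forces $\gamma(\mathcal{D})+\gamma(A)=0$ with both summands nonnegative, hence $\gamma(\mathcal{D})=0$. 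For (iii), if $\mathcal{C}$ is ordinary then $\gamma(\mathcal{D})+\gamma(A)=\gen(\mathcal{C})=\gen(\mathcal{D})+\dim A$, and comparison with $\gamma(\mathcal{D})\le\gen(\mathcal{D})$ and $\gamma(A)\le\dim A$ forces both to be equalities, so $\mathcal{D}$ is ordinary.

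The one genuinely delicate point is the structural statement $J_{\mathcal{C}}\sim J_{\mathcal{D}}\times A$ with the right dimension: one needs $\ker f^{*}$ finite \emph{uniformly} in $f$ — the norm computation above handles this, since $\deg f$ may be divisible by $p$ — together with a splitting up to isogeny, for which Poincaré reducibility over the (now algebraically closed) base field suffices. As an alternative avoiding abelian varieties, when $f$ is separable one can argue directly with the Cartier operator: $f^{*}$ embeds $\Delta^{(1)}_{\mathcal{D}}$ into $\Delta^{(1)}_{\mathcal{C}}$ and commutes with $C$, so it carries $\Delta^{\log}_{\mathcal{D}}$ into $\Delta^{\log}_{\mathcal{C}}$ and the $C$-nilpotent space $\Delta^{0}_{\mathcal{D}}$ into $\Delta^{0}_{\mathcal{C}}$; since the decomposition \eqref{decoposicao-espaco-diferenciais-holomorfas} is canonical this gives $\Delta^{\log}_{\mathcal{C}}\cap f^{*}\Delta^{(1)}_{\mathcal{D}}=f^{*}\Delta^{\log}_{\mathcal{D}}$, and estimating $\dim\!\big(\Delta^{\log}_{\mathcal{C}}+f^{*}\Delta^{(1)}_{\mathcal{D}}\big)$ inside $\Delta^{(1)}_{\mathcal{C}}$ yields both inequalities of (i); the general case then follows by factoring $f$ through a power of Frobenius, which changes neither genus nor $p$-rank. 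I would present the Jacobian argument as the proof, as it is shortest and uses exactly the facts already recorded in this section.
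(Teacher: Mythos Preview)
The paper does not supply its own proof of this lemma: it is quoted verbatim from \cite{Bassa} (their Lemma~6) and stated without argument, so there is nothing in the paper to compare your proof against.

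That said, your approach is correct and is the standard one. The key steps --- finiteness of $\ker f^{*}$ via $f_{*}\circ f^{*}=[\deg f]$, Poincar\'e complete reducibility to split $J_{\mathcal{C}}\sim J_{\mathcal{D}}\times A$, and then isogeny invariance plus additivity of the $p$-rank (the two facts the paper explicitly recalls just above the lemma) --- are exactly what one would expect, and your derivations of (ii) and (iii) from (i) are immediate. Your alternative sketch via the Cartier operator is also sound and matches the paper's preferred language of holomorphic differentials; either route would be acceptable here.
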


The following well-known facts will be useful in obtaining formulas for the $p$-rank of curves of  type $y^{m} = x^{n} + 1$.

\begin{lemma}[\cite{Fini,Lucas}, Lucas' Lemma]\label{lucas' theorem}
	Let $p$ be a prime, and let $a_0, b_0, \ldots, a_r, b_r \in \llbracket0, p-1\rrbracket$. Then
	\begin{equation*}
	\dbinom{a_rp^r + \cdots + a_1p + a_0}{b_rp^r + \cdots + b_1p + b_0} \equiv \prod_{i=0}^{r} \dbinom{a_i}{b_i} \pmod{p}.
	\end{equation*}
\end{lemma}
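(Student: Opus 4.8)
The plan is to prove Lucas' Lemma by passing to the polynomial ring $\Fa_p[X]$ and comparing coefficients, exploiting the Frobenius identity $(1+X)^p = 1 + X^p$.

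First I would record the basic consequence of the Frobenius endomorphism of $\Fa_p$: in $\Fa_p[X]$ one has $(1+X)^p = 1 + X^p$, and hence, by iterating, $(1+X)^{p^i} = 1 + X^{p^i}$ for every $i \geq 0$. Writing $n = a_rp^r + \cdots + a_1p + a_0$, this gives, in $\Fa_p[X]$,
\[
(1+X)^n = \prod_{i=0}^{r} \bigl((1+X)^{p^i}\bigr)^{a_i} = \prod_{i=0}^{r} (1 + X^{p^i})^{a_i} = \prod_{i=0}^{r} \sum_{c_i = 0}^{a_i} \dbinom{a_i}{c_i} X^{c_i p^i}.
\]

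Next I would extract the coefficient of $X^k$, where $k = b_rp^r + \cdots + b_1p + b_0$ with each $b_i \in \llbracket 0, p-1 \rrbracket$. On the left, by the binomial theorem in $\Fa_p[X]$, this coefficient is $\dbinom{n}{k}$ reduced modulo $p$. On the right, a monomial $X^{c_r p^r}\cdots X^{c_0}$ contributes to $X^k$ exactly when $\sum_i c_i p^i = k$; since $0 \leq c_i \leq a_i \leq p-1$, the tuple $(c_0, \ldots, c_r)$ is a string of base-$p$ digits, so by uniqueness of the base-$p$ expansion the only possibility is $c_i = b_i$ for all $i$ — and this actually occurs precisely when $b_i \leq a_i$ for every $i$. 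Hence the coefficient of $X^k$ on the right is $\prod_{i=0}^r \dbinom{a_i}{b_i}$ when $b_i \leq a_i$ for all $i$, and $0$ otherwise; since $\dbinom{a_i}{b_i} = 0$ whenever $b_i > a_i$, in every case the right-hand coefficient equals $\prod_{i=0}^r \dbinom{a_i}{b_i}$. Comparing the two expressions for the coefficient of $X^k$ yields the claimed congruence.

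The only real subtlety — and where I would be most careful — is this coefficient-matching step: one must invoke the uniqueness of the base-$p$ representation of $k$ (guaranteed by the hypothesis $b_i \in \llbracket 0, p-1 \rrbracket$) to conclude that a single term of the expanded product contributes to $X^k$, and then verify that the degenerate case $b_i > a_i$ is consistent with the convention $\dbinom{a_i}{b_i} = 0$. Alternatively, the same idea can be packaged as an induction on $r$: it suffices to establish $\dbinom{pm+a}{pn+b} \equiv \dbinom{m}{n}\dbinom{a}{b} \pmod{p}$ for $0 \leq a, b \leq p-1$ (again by reading off the coefficient of $X^{pn+b}$ in $(1+X^p)^m(1+X)^a$) and then iterate on the quotient digits; I would mention this variant but carry out the one-shot generating-function argument above.
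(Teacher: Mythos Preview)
Your argument is correct and is in fact the classical proof of Lucas' Lemma via the Frobenius identity $(1+X)^p = 1 + X^p$ in $\Fa_p[X]$; the handling of the coefficient extraction, including the degenerate case $b_i > a_i$, is fine. Note, however, that the paper does not supply its own proof of this lemma at all: it is stated with references and used as a standard tool, so there is no ``paper's proof'' to compare against.
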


\begin{lemma}[\cite{Cou}, Lemma 2.6; \cite{McEliece}, p. 175]\label{04112022-1}
	Let $p$ be a prime, and $m, n \in \Za_{>0}$. If $d=\gcd(m,n)$, then
	\begin{equation*}\gcd(p^m+1,p^n-1) = \left\{
	\begin{array}{ll}
	1 & \mbox{\;if\;}  n/d  \mbox{\;is odd and\;} p=2 \vspace{0.2cm} \\
	2 & \mbox{\;if\;}  n/d  \mbox{\;is odd and\;} p>2 \vspace{0.2cm}\\
	p^d+1 & \mbox{\;if\;}  n/d  \mbox{\;is even\;}.
	\end{array}
	\right.\end{equation*}
\end{lemma}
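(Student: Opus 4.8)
The plan is to first reduce to the case $\gcd(m,n)=1$ and then finish with elementary divisibility facts about the integers $a^k\pm 1$.

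First I would put $q=p^d$, $m'=m/d$, $n'=n/d$, so that $\gcd(m',n')=1$, $p^m+1=q^{m'}+1$, $p^n-1=q^{n'}-1$, and $p^d+1=q+1$. Note that $q$ is even precisely when $p=2$, that $n'=n/d$ has the same parity as $n/d$ in the statement, and that $n'$ even forces $m'$ odd (by $\gcd(m',n')=1$). Hence the lemma is equivalent to the claim: for an integer $a\geq 2$ and coprime positive integers $r,s$,
\[
\gcd(a^r+1,\,a^s-1)=\begin{cases} a+1 & \text{if }s\text{ is even},\\ \gcd(a+1,2) & \text{if }s\text{ is odd},\end{cases}
\]
where $\gcd(a+1,2)$ equals $1$ when $a$ is even and $2$ when $a$ is odd. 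Substituting $a=q=p^d$ then recovers the three displayed cases, since $q$ is even iff $p=2$.

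To prove the claim I would use the classical identity $\gcd(a^i-1,a^j-1)=a^{\gcd(i,j)}-1$ together with $a^r+1\mid a^{2r}-1$. Writing $g=\gcd(a^r+1,a^s-1)$, one gets $g\mid a^{\gcd(2r,s)}-1$, and a short computation with $2$-adic valuations using $\gcd(r,s)=1$ shows $\gcd(2r,s)=1$ when $s$ is odd and $\gcd(2r,s)=2$ when $s$ is even (in the latter case $r$ is odd). If $s$ is odd, then $g\mid a-1$, so $a\equiv 1\pmod g$ and hence $0\equiv a^r+1\equiv 2\pmod g$, forcing $g\mid 2$; since $2$ divides both $a^r+1$ and $a^s-1$ exactly when $a$ is odd, this gives $g=\gcd(a+1,2)$. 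If $s$ is even, then $g\mid a^2-1$, so $a^2\equiv 1\pmod g$, and as $r$ is odd this yields $a^r\equiv a\pmod g$; combined with $a^r\equiv -1\pmod g$ we obtain $g\mid a+1$. Conversely $a+1\mid a^r+1$ (as $r$ is odd) and $a+1\mid a^2-1\mid a^s-1$ (as $s$ is even), so $a+1\mid g$, and therefore $g=a+1$.

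I do not expect a genuine obstacle here: the argument is short and essentially computational. The only places needing a little care are the bookkeeping for $\gcd(2r,s)$, the observation that $a^2\equiv 1\pmod g$ and $r$ odd force $a^r\equiv a\pmod g$ (which is exactly what pins down $g=a+1$ in the even case), and keeping track that the value $1$ versus $2$ in the odd case is decided solely by whether $p=2$.
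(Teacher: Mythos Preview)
Your argument is correct. The reduction to $a=p^d$ with coprime exponents $r=m/d$, $s=n/d$ is clean, and the two cases are handled properly: for $s$ odd you get $g\mid a-1$ hence $g\mid 2$, with the parity of $a$ deciding between $1$ and $2$; for $s$ even you correctly use that $r$ must then be odd, so $\gcd(2r,s)=2$, and the step $a^2\equiv 1\pmod g\Rightarrow a^r\equiv a\pmod g$ (via $r$ odd) pins down $g=a+1$.

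As for comparison: the paper does not supply a proof of this lemma at all; it simply quotes the result from \cite{Cou} and \cite{McEliece}. So your write-up is strictly more than what the paper contains. If anything, you could trim the narrative framing (``I would put\ldots'', ``I do not expect\ldots'') into a direct proof, but the mathematics stands as is.
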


A curve $\mathcal{X}$  defined over a finite field $k$ is called supersingular if  it is Hasse-Weil minimal over some extension of $k$, and  it is  well known  that supersingular curves have zero $p$-rank \cite{OORT,SILV}. Though  not affecting  our results, it is worth noting that \cite[Theorem 1.2]{Oliveira}, with some simple adjustments, yields the following (see also  \cite[Theorem 5]{SaeedTor1}).

\begin{theorem}\label{Oliva}
Let $n,m \in \mathbb{Z}_{>0}$ be such that  $\mathcal{F}_{m,n}: y^{m} = x^{n} + 1$  is an irreducible curve   over $\bar{\mathbb{F}}_p$ of genus $\gen >0$. Then $\mathcal{F}_{m,n}$ is supersingular if and only if there exists $h \in \mathbb{Z}_{>0}$ such that $n$ and $m$ divide  $p^h+1$.
\end{theorem}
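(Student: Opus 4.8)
The plan is to reduce the supersingularity question for $\mathcal{F}_{m,n}$ to a statement about the finitely many quotient curves of the form $\mathcal{F}_{m',n'}$ with $m'\mid m$ and $n'\mid n$, and to analyze the latter via the classical description of the jacobian of a Fermat-type curve as a product of abelian varieties attached to Jacobi sums. First I would invoke the well-known decomposition (going back to Weil, see also the references the excerpt cites) of the jacobian $J_{m,n}$ of $y^m=x^n+1$ up to isogeny as a product $\prod J_{\chi}$ running over characters (equivalently, over orbits of pairs $(a,b)$ with $1\le a\le m-1$, $1\le b\le n-1$ and $a/m+b/n\ne 1$), where each factor has CM by a cyclotomic field and its local zeta factor at $p$ is governed by Jacobi sums $J(\chi^a,\psi^b)$. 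A factor is supersingular exactly when all the relevant Jacobi sums are, up to a root of unity, powers of $\sqrt p$; by the Stickelberger-type criterion this is controlled by whether multiplication by $p$ permutes the residues $\{a,b\}$ in a way that balances the fractional parts — concretely, whether there is $h$ with $p^h\equiv -1$ modulo the relevant moduli, which forces $n,m\mid p^h+1$.

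The key steps, in order, would be: (1) record that $\mathcal{F}_{m,n}$ is supersingular over $\bar{\mathbb F}_p$ iff every simple isogeny factor of $J_{m,n}$ is supersingular, and that these factors are, up to isogeny, distributed among the jacobians $J_{\mathcal{F}_{m',n'}}$ for $m'\mid m$, $n'\mid n$ via the quotient maps $\mathcal{F}_{m,n}\to\mathcal{F}_{m',n'}$ together with Theorem \ref{teo-13032020} (Kani–Rosen) applied to the cyclic group $\mathbb{Z}/m\times\mathbb{Z}/n$ acting by $(x,y)\mapsto(\zeta_n x,\zeta_m y)$; (2) for the "if" direction, assume $n,m\mid p^h+1$ and show each Jacobi-sum factor satisfies $|J(\chi^a,\psi^b)|^2$-type relations forcing the Frobenius eigenvalues to be $\sqrt{p}$ times roots of unity — equivalently cite \cite[Theorem 1.2]{Oliveira} after checking that the genus-positivity and irreducibility hypotheses translate verbatim; (3) for the "only if" direction, suppose no such $h$ exists; then pick a character orbit for which $p$ does not act by $-1$ on the relevant fractions, producing a Jacobi sum whose archimedean absolute value is $\sqrt p$ but which is not a root of unity times $\sqrt p$ in the appropriate sense, hence a non-supersingular factor, hence $\gamma(\mathcal{F}_{m,n})>0$ would actually follow but more precisely the curve fails Hasse–Weil minimality over every extension; (4) reconcile the "$n$ and $m$ divide $p^h+1$" condition: if $n\mid p^{h_1}+1$ and $m\mid p^{h_2}+1$ then taking $h=\operatorname{lcm}$-type argument on the exponents (using that $p^a+1\mid p^{a(2k+1)}+1$) yields a common $h$ with both divisibilities, so the two-exponent and one-exponent formulations agree.

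The main obstacle I expect is step (3): turning "no $h$ with $p^h\equiv-1$" into an honest non-supersingular isogeny factor. The subtlety is that supersingularity of an abelian variety with CM is not simply "some Frobenius eigenvalue is not $\sqrt p$ times a root of unity" factor-by-factor in a naive way — one must choose the character orbit correctly and track the whole Galois orbit of the Jacobi sum, invoking Stickelberger's theorem on the prime factorization of Gauss sums to locate the $p$-adic valuations of the eigenvalues and conclude they are not all equal to $1/2$. A clean way to sidestep much of this is to lean on \cite[Theorem 1.2]{Oliveira} as a black box, as the statement of Theorem \ref{Oliva} explicitly allows ("with some simple adjustments"): then the real work is only the bookkeeping in step (4) and verifying that the genus-$>0$, irreducibility, and "$p\nmid nm$" hypotheses of the present setup are exactly the ones under which \cite{Oliveira} applies, so that the cited equivalence transfers without change. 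I would therefore structure the written proof as: state the reduction to \cite[Theorem 1.2]{Oliveira} and \cite[Theorem 5]{SaeedTor1}, note the hypothesis translation, and then give the short elementary argument that the single-exponent condition "$\exists h:\,n,m\mid p^h+1$" is equivalent to the possibly-different-exponents version, which is the only genuinely self-contained computation needed.
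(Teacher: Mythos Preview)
The paper gives no self-contained proof of Theorem~\ref{Oliva}: it simply records that the statement follows from \cite[Theorem 1.2]{Oliveira} ``with some simple adjustments'' (and points also to \cite[Theorem 5]{SaeedTor1}). Your proposal, after its Jacobi-sum discussion, lands in the same place---invoke \cite{Oliveira} as a black box and check the hypotheses---so on the main point you are aligned with the paper.

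However, your step (4) is both unnecessary and mathematically incorrect. You claim that the single-exponent condition ``$\exists h$ with $n,m\mid p^h+1$'' is equivalent to the two-exponent condition ``$\exists h_1,h_2$ with $n\mid p^{h_1}+1$ and $m\mid p^{h_2}+1$'', and propose to prove this via $p^a+1\mid p^{a(2k+1)}+1$. This equivalence is false. If the multiplicative order of $p$ modulo $n$ is $d_1$ and modulo $m$ is $d_2$, then $p^h\equiv -1$ modulo $n$ and modulo $m$ simultaneously requires $h\equiv d_1/2\pmod{d_1}$ and $h\equiv d_2/2\pmod{d_2}$, which need not be compatible. The paper itself records this in Remark~\ref{supersing}: the curves $y^{p^m+1}=x^{p^n+1}+1$ are supersingular if and only if $\nu_2(m)=\nu_2(n)$, whereas by Theorem~\ref{+formulas-thm4} they always have zero $p$-rank. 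So the two-exponent condition gives $p$-rank zero but not supersingularity in general. Since the theorem is already stated with a single $h$, simply delete step (4); nothing needs reconciling.
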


\begin{remark}\label{supersing}
It follows from Lemma \ref{04112022-1} and Theorem \ref{Oliva} that the curves $y^{p^m+1} = x^{p^n+1} + 1$
are supersingular if and only if $\nu_2(n) = \nu_2(m)$, where  $\nu_2: \mathbb{Z} \rightarrow \mathbb{N} \cup\{\infty\}$ is the $2$-adic valuation. %In the same vein, one sees  that curves  of type  $y^{p^m-1} = x^{p^n-1} + 1$  cannot be supersingular.
In the same vein, one sees  that the only nonrational  curve  of type  $y^{p^m \pm 1} = x^{p^n-1} + 1$
that is supersingular  is $y^{3^m + 1} = x^{2} + 1$.
\end{remark}

\section{The p-rank of Fermat-type curves  }\label{sec2}

Throughout this section $n, m \geq 2$ are integers coprime to $p$,  $\mathcal{F}_{m,n}: y^{m} = x^{n} + 1$ is a curve over $\Kk$, and  $q=p^h$ is such that $n$ and $m$ are divisors of $q-1$. Additionally, let ${\Kk}(\mathcal{F}_{m,n})={\Kk}(x,y)$ be the  function field of $\mathcal{F}_{m,n}$.
The genus of $\mathcal{F}_{m,n}$ is given by
\begin{equation*}\label{genus-Cnm}
\gen = \dfrac{(m-1)(n-1)+1-\gcd(m,n)}{2},
%\gen = \dfrac{nm-n-m-\gcd(n,m)}{2}+1,
\end{equation*}
and the set
\begin{equation*}%\label{basis-Cnm}
B = \left\{ \omega_{i,j} = \dfrac{x^{i-1}}{y^{j}}dx \mid  (i,j) \in \Na^2 \mbox{\;with\;} m \leq im < jn \leq n(m-1)  \right\}
\end{equation*}
is a basis for the vector space of  holomorphic differentials of $\mathcal{F}_{m,n}$.
For further details on the basic properties of the curve $\mathcal{F}_{m,n}$, see \cite{Aristides} and \cite{Towse}.

Note that for $(i,j) \in \Na^2$ and $m \leq im < jn \leq n(m-1)$, the equation $y^{m} = x^{n} + 1$ yields
\begin{equation*}
\dfrac{x^{i-1}}{y^{j}} =  \dfrac{x^{i-1}}{y^{jq}} (x^n+1)^{j(q-1)/m} = \sum_{r=0}^{\frac{j(q-1)}{m}} \dbinom{\frac{j(q-1)}{m}}{r} \dfrac{x^{rn+i-1}}{y^{jq}}.
\end{equation*}
That is, the elements of $B$ can be written as 
\begin{equation}\label{eq2-Cnm}
\omega_{i,j} =  \sum_{r=0}^{\frac{j(q-1)}{m}} \dbinom{\frac{j(q-1)}{m}}{r} \dfrac{x^{rn+i-1}}{y^{jq}} dx.
\end{equation}
We now restate and prove our main result for the $p$-rank of Fermat-type curves.
\begin{theorem}\label{cor-p-rank-Cnm}
	Let $n, m\geq 2$  be integers coprime to $p$, and let $h,\alpha,\beta \in \mathbb{Z}_{>0}$ be such that $\alpha n = \beta m = p^h-1$. Then the $p$-rank of the curve $\mathcal{F}_{m,n}: y^{m} = x^{n} + 1$ is given by 
	\begin{equation*}\gamma(\mathcal{F}_{m,n})=\#T-(m+n+\gcd(m,n)),\end{equation*}
	where
	\begin{equation*}
	T = \left\{ (i,j) \in \Na^2 \mid  0 \leq i\alpha \leq j\beta \leq p^h-1 \; \mbox{and}\; \dbinom{j\beta}{i\alpha} \not\equiv 0 \pmod{p} \right\}.
	\end{equation*}
\end{theorem}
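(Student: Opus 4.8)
The plan is to turn the given basis $B=\{\omega_{i,j}\}$ of holomorphic differentials into an eigenbasis for $C^h$, apply Theorem~\ref{prop-2201} to express $\gamma(\mathcal{F}_{m,n})$ as the number of nonzero eigenvalues, and then reconcile that count with $\#T$ by a short inclusion--exclusion.

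First I would compute $C^h(\omega_{i,j})$ from the expansion \eqref{eq2-Cnm}. Since $q=p^h$, we have $y^{jq}=(y^j)^{p^h}$, so the $1/p$-linearity of the Cartier operator pulls $y^{-j}$ outside $C^h$ and leaves us to apply $C^h$ to $\sum_{r}\binom{j\beta}{r}x^{rn+i-1}\,dx$ (using $j(q-1)/m=j\beta$). By \eqref{Theo1}, the $r$-th summand survives only when $rn+i\equiv 0\pmod{q}$, in which case it becomes $\binom{j\beta}{r}x^{(rn+i)/q-1}\,dx$. Here is the key point: because $\gcd(n,q)=1$, the congruence $rn+i\equiv 0\pmod q$ determines $r$ modulo $q$; and since $\omega_{i,j}\in B$ forces $j\le m-1$, the index $r$ runs only over $0\le r\le j\beta\le(m-1)\beta=q-1-\beta<q$, so there is \emph{at most one} admissible $r$, namely $r=i\alpha$ (indeed $i\alpha\,n+i=i(q-1)+i=iq\equiv0$, and $im<jn\le n(m-1)$ gives $i<n$, hence $0\le i\alpha\le q-2<q$). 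Moreover $(i\alpha\,n+i)/q=i$, so the surviving monomial is exactly $x^{i-1}$. Hence, with the convention $\binom{a}{b}=0$ for $b>a$ (which also covers the case $i\alpha>j\beta$, when no admissible $r$ lies in range),
\[
C^h(\omega_{i,j})=\binom{j\beta}{i\alpha}\,\omega_{i,j},
\]
so $B$ is itself a $C^h$-eigenbasis and Theorem~\ref{prop-2201} yields
\[
\gamma(\mathcal{F}_{m,n})=\#\bigl\{(i,j)\in\Na^2 : m\le im<jn\le n(m-1),\ \textstyle\binom{j\beta}{i\alpha}\not\equiv0\pmod p\bigr\}.
\]

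It then remains to compare this with $\#T$. Using $\alpha n=\beta m=q-1$ one checks that $i\alpha\le j\beta\Leftrightarrow im\le jn$ and $j\beta\le q-1\Leftrightarrow j\le m$, so the index set just displayed is precisely the subset of $T$ cut out by the strict conditions $1\le i$, $im<jn$, $j\le m-1$; consequently $\#T-\gamma(\mathcal{F}_{m,n})$ is the number of ``boundary'' pairs of $T$, namely those with $i=0$, or $im=jn$, or $j=m$. An inclusion--exclusion over these three slices finishes the argument: the slice $i=0$ contributes the $m+1$ pairs $(0,j)$, $0\le j\le m$ (binomial coefficient $1$); the slice $im=jn$ is parametrized by $(i,j)=(kn/d,km/d)$ with $0\le k\le d:=\gcd(m,n)$ and contributes $d+1$ pairs (again binomial coefficient $1$, since $i\alpha=j\beta=k(q-1)/d$); and the slice $j=m$ contributes the $n+1$ pairs $(i,m)$, $0\le i\le n$, all with binomial coefficient $\binom{q-1}{i\alpha}\not\equiv0\pmod p$ because every base-$p$ digit of $q-1=p^h-1$ equals $p-1$ (Lucas' Lemma \ref{lucas' theorem}). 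The three pairwise intersections are the single points $(0,0)$, $(0,m)$, $(n,m)$, and the triple intersection is empty since $m,n\ge2$; hence the boundary has $(m+1)+(d+1)+(n+1)-3=m+n+\gcd(m,n)$ pairs, which is exactly the claimed correction term.

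The only delicate step is the uniqueness of the admissible $r$ in the Cartier computation: one must make sure the hypothesis $j\le m-1$ (equivalently $j\beta<q$) really confines $r$ to one representative of its residue class, identify that representative as $i\alpha$, and check that it produces the monomial $x^{i-1}$ — so that $C^h$ acts diagonally on $B$ rather than merely in triangular form. Everything after that is bookkeeping: translating the inequalities defining $B$ into those defining $T$ and running the inclusion--exclusion over the three boundary slices.
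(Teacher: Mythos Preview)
Your proof is correct and follows essentially the same route as the paper: both expand $\omega_{i,j}$ via \eqref{eq2-Cnm}, use \eqref{Theo1} and $1/p$-linearity to show $C^h(\omega_{i,j})=\binom{j\beta}{i\alpha}\omega_{i,j}$, invoke Theorem~\ref{prop-2201}, and then count the boundary pairs. Your uniqueness argument for $r$ (via $\gcd(n,q)=1$ and $r<q$) is a mild rephrasing of the paper's listing of the multiples of $q$ that $rn+i$ could equal, and your explicit inclusion--exclusion spells out what the paper leaves as ``one can check.''
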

\begin{proof}
	Let $q=p^h$. The $1/p$-linearity of the Cartier operator and \eqref{eq2-Cnm} imply
	\begin{equation}\label{eq3-Cnm}
	C^h(\omega_{i,j}) = \sum_{r=0}^{\frac{j(q-1)}{m}}   \dbinom{\frac{j(q-1)}{m}}{r} \dfrac{ C^h  (x^{rn+i-1} dx)}{y^j}.
	\end{equation}
	It follows from \eqref{Theo1} that $ C^h ( x^{rn+i-1} dx)$ is nonzero if and only if  $nr+i \equiv 0 \pmod{q}$. 
	%Since $im + nj \leq 2\g-2$, %(cf. Equation \eqref{genus-Cnm} and \eqref{basis-Cnm-I}),
	%we have that $i \leq n-2$ %. Since $0 \leq r \leq q-1$, it follows that 
	Note that $r\leq j(q-1)/m$ and  $m \leq im < jn \leq n(m-1)$ entail  $1 \leq nr+i \leq nq - 1$.
	Thus $nr+i \equiv 0 \pmod{q}$ if and only if $nr+i \in \{q, 2q, \ldots, (n-1)q\}$. The latter is equivalent to  
	\begin{equation*} r \in \Big\{\frac{q-i}{n}, \frac{2q-i}{n}, \ldots, \frac{(n-1)q - i}{n}  \Big\} \cap \Za,\end{equation*}
	and thus 
	$ C^h ( x^{rn+i-1} dx)$ is nonzero if and only if $r=\frac{i(q-1)}{n}$. Therefore,
	equation (\ref{eq3-Cnm}) reduces to
	\begin{eqnarray*}
		C^h(\omega_{i,j})
		& = &   \dbinom{\frac{j(q-1)}{m}}{\frac{i(q-1)}{n}} \omega_{i,j},
	\end{eqnarray*}
	and Theorem \ref{prop-2201} yields 
	\begin{equation}\label{eq1-13042021}
	\gamma(\mathcal{F}_{m,n}) = \#\left\{ (i,j) \in \Na^2 \mid  m \leq im < jn \leq n(m-1) \; \mbox{and}\; \dbinom{j\beta}{i\alpha} \not\equiv 0 \pmod{p} \right\}.
	\end{equation}
	Note that the  pairs $(i,j) \in T$ not occurring  in   \eqref{eq1-13042021} are those for which $j = m$, $i = 0$, or $im = jn$. One can check that the number of such pairs   is $m+n+\gcd(n,m)$, and this concludes the proof.
\end{proof}

\begin{remark}
	In Theorem \ref{cor-p-rank-Cnm}, note that one can always define $h=\lcm(h_1,h_2)$, where $h_1$ and $h_2$ are the orders of $p$ in $(\Za/m\Za)^{\times}$ and $(\Za/n\Za)^{\times}$, respectively. Then $\alpha = (p^h-1)/n$ and 
	$\beta  = (p^h-1)/m$.
\end{remark}

The subsequent particular cases of Theorem \ref{cor-p-rank-Cnm} are worth highlighting. 
\begin{corollary}\label{p-rank-Fn-Fermat} 
	Let $n \in \Za_{>0}$, with $\gcd(n,p)=1$, and let $q=p^h$ be such that $\alpha = (q-1)/n \in \Za_{>0}$. Then the $p$-rank of the Fermat curve $\mathcal{F}_n: y^{n} = x^{n} + 1$ is given by \begin{equation*}\gamma(\mathcal{F}_n)=\#T-3n,\end{equation*}
	where
	\begin{equation*}
	T = \left\{ (i,j) \in \Na^2 \mid  0 \leq i \leq j \leq n \; \mbox{and}\; \dbinom{j\alpha}{i\alpha} \not\equiv 0 \pmod{p} \right\}.
	\end{equation*}
\end{corollary}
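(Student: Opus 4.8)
The plan is to obtain this as the immediate specialization of Theorem~\ref{cor-p-rank-Cnm} to the diagonal case $m=n$, so nothing deep is required beyond a short bookkeeping check. Setting $m=n$ in Theorem~\ref{cor-p-rank-Cnm}, the relation $\alpha n=\beta m=p^h-1$ forces $\alpha=\beta=(p^h-1)/n$, which is exactly the integer $\alpha=(q-1)/n$ appearing in the statement of the corollary; in particular the hypothesis $\alpha\in\Za_{>0}$ is precisely the condition that $n$ divides $p^h-1$, which is what is needed to invoke the theorem (taking $n\geq 2$ as in the rest of this section, so that $\mathcal{F}_n$ is a curve to which the theorem applies verbatim; for the degenerate $n=1$ the curve is rational and one checks directly that the formula returns $0$).

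It then remains to simplify the two ingredients of the formula. First, $\gcd(m,n)=\gcd(n,n)=n$, so the correction term is $m+n+\gcd(m,n)=3n$. Second, in the description of $T$ the chain of inequalities $0\leq i\alpha\leq j\beta\leq p^h-1$ becomes $0\leq i\alpha\leq j\alpha\leq p^h-1$; dividing through by $\alpha>0$ and using $(p^h-1)/\alpha=n$, this is equivalent to $0\leq i\leq j\leq n$, while the nonvanishing condition $\dbinom{j\beta}{i\alpha}\not\equiv 0\pmod p$ reads $\dbinom{j\alpha}{i\alpha}\not\equiv 0\pmod p$. Hence the set $T$ of Theorem~\ref{cor-p-rank-Cnm} coincides with the set $T$ in the corollary, and substituting into $\gamma(\mathcal{F}_{m,n})=\#T-(m+n+\gcd(m,n))$ gives $\gamma(\mathcal{F}_n)=\#T-3n$.

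I do not anticipate any real obstacle here: all of the content lies in Theorem~\ref{cor-p-rank-Cnm}, and what is left is the elementary substitution above. The only point worth an explicit sentence is the verification that the data $(h,\alpha)$ of the corollary genuinely meet the hypotheses of the theorem, which, as noted, they do by construction since $\alpha=(q-1)/n\in\Za_{>0}$ is equivalent to $n\mid p^h-1$.
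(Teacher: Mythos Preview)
Your proposal is correct and matches the paper's approach: the paper states this corollary immediately after Theorem~\ref{cor-p-rank-Cnm} without proof, treating it as the direct specialization $m=n$, and your bookkeeping (that $\alpha=\beta$, that $\gcd(m,n)=n$ gives the $3n$ correction, and that dividing the chain of inequalities by $\alpha$ yields $0\leq i\leq j\leq n$) is exactly what is implicit there.
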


\begin{corollary}\label{p-rank-c1a}
	Let $n \in \Za_{>0}$, with $\gcd(n,p)=1$, and let $q=p^h$ be such that $\alpha = (q-1)/n \in \Za_{>0}$. Then the $p$-rank of $\Cc_n:y^2=x^n+1$ is given by 
	\begin{equation*}\gamma(\Cc_n)= \left\{\begin{array}{ll}
	\#S - 1 & \mbox{\;if\;}  n  \mbox{\;is odd\;} \vspace{0.2cm} \\
	\#S - 2 & \mbox{\;if\;}  n  \mbox{\;is even\;},
	\end{array}	\right.\end{equation*}
	where
	\begin{equation*}S = \left\{i \in \llbracket0, \lfloor n/2\rfloor\rrbracket \mid  \dbinom{\frac{q-1}{2}}{i\alpha} \not\equiv 0 \pmod{p} \right\}.\end{equation*}
	
\end{corollary}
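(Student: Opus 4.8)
The plan is to obtain Corollary \ref{p-rank-c1a} as a direct specialization of Theorem \ref{cor-p-rank-Cnm} to $m=2$, the only work being a careful enumeration of the set $T$. With $m=2$ the relations $\alpha n=\beta m=p^h-1$ give $\beta=(p^h-1)/2$, so the constraint $0\le i\alpha\le j\beta\le p^h-1=2\beta$ appearing in the definition of $T$ forces $j\in\{0,1,2\}$. Hence I would split $\#T$ into three layers indexed by $j$ and count each.

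For $j=0$: the inequality $i\alpha\le 0$ forces $i=0$, and $\binom{0}{0}=1\not\equiv 0\pmod p$, so this layer contributes exactly one pair. For $j=2$: here $j\beta=2\beta=p^h-1$, whose base-$p$ digits are all $p-1$, so Lucas' Lemma (Lemma \ref{lucas' theorem}) gives $\binom{p^h-1}{i\alpha}\not\equiv 0\pmod p$ for every integer in range, i.e. for every $i$ with $0\le i\alpha\le p^h-1=\alpha n$, equivalently $0\le i\le n$; this layer contributes $n+1$ pairs. For $j=1$: here $j\beta=(p^h-1)/2$, and since $\alpha=(p^h-1)/n$ the inequality $i\alpha\le(p^h-1)/2$ is equivalent to $i/n\le 1/2$, i.e. $0\le i\le\lfloor n/2\rfloor$; thus the pairs $(i,1)\in T$ are exactly those with $i$ in the set $S$ of the statement, contributing $\#S$ pairs. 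Adding the three layers gives $\#T=\#S+(n+1)+1=\#S+n+2$.

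Substituting this into the formula of Theorem \ref{cor-p-rank-Cnm}, namely $\gamma(\Cc_n)=\#T-(2+n+\gcd(2,n))$, yields $\gamma(\Cc_n)=\#S-\gcd(2,n)$, which is $\#S-1$ when $n$ is odd and $\#S-2$ when $n$ is even, as claimed. There is no real obstacle here: the single nontrivial ingredient is the standard consequence of Lucas' Lemma that the $(p^h-1)$-st row of Pascal's triangle mod $p$ has no zero entries, which is what makes the $j=2$ layer contribute its full $n+1$ pairs; everything else is bookkeeping, translating the divisibility and size constraints on $i\alpha$ and $j\beta$ back into ranges for the integers $i$ and $j$ via $\alpha n=\beta m=p^h-1$ with $m=2$.
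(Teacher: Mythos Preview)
Your proposal is correct and follows essentially the same approach as the paper: specialize Theorem \ref{cor-p-rank-Cnm} to $m=2$, split $T$ according to $j\in\{0,1,2\}$, and count each layer to obtain $\#T=\#S+n+2$. The only difference is that you make explicit (via Lucas' Lemma) why the $j=2$ layer contributes all $n+1$ pairs, whereas the paper simply asserts $\#\{(i,j)\in T: j\in\{0,2\}\}=n+2$ without further comment.
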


\begin{proof}
	From  Theorem \ref{cor-p-rank-Cnm}, the problem reduces to determining  the cardinality  of
	\begin{equation*}
	T = \left\{ (i,j) \in \Na^2 \mid  0 \leq i\alpha \leq \frac{j(q-1)}{2} \leq q-1 \; \mbox{and}\; \dbinom{\frac{j(q-1)}{2}}{i\alpha} \not\equiv 0 \pmod{p} \right\}.
	\end{equation*} 
	Note that  $(i,j) \in T$ implies  $j \in \{0,1,2\}$, and $\#\{ (i,j) \in T: j \in \{0,2\}\} = n+2$. In addition, since $(q-1)/(2\alpha) = n/2$,  we have that $(i,1) \in T$ is equivalent to  $i \in \llbracket0,\lfloor n/2 \rfloor\rrbracket$ and $\dbinom{\frac{q-1}{2}}{i\alpha} \not\equiv 0 \pmod{p}$. Therefore, $\#T=\#S+n+2$, and then 
	\begin{equation*}\gamma(\Cc_n)= \#T-(n+2+\gcd(2,n)) = \left\{\begin{array}{ll}
	\#S - 1 & \mbox{\;if\;}  n  \mbox{\;is odd\;} \vspace{0.2cm} \\
	\#S - 2 & \mbox{\;if\;}  n  \mbox{\;is even\;}.
	\end{array}	\right.\end{equation*}
\end{proof}

In what follows, we use  Theorem \ref{teo-13032020} to relate the $p$-ranks of the curves $\mathcal{F}_{m,n}$ and $y^m = x^r(x^{m^{k-1}u} + 1)$, with $r \in \llbracket0, m-1\rrbracket$.

\begin{theorem}\label{p-rank-Fnm-kani-rosen}
	Let $m \neq p$ be a prime, and let $u \geq 1$ be an integer coprime with $p$ and $m$. Then for each integer $k \geq 1$ and $r \in \llbracket0, m-1\rrbracket$, the curves %defined over $\F_p$ given by
	\begin{equation*}\mathcal{F} : y^m = x^{m^ku} + 1 \mbox{\; and \;} \mathcal{G}_r : y^m = x^r(x^{m^{k-1}u} + 1)\end{equation*}
	are such that
	\begin{equation*}\gamma(\mathcal{F}) = \sum_{r=0}^{m-1} \gamma(\mathcal{G}_{r}).\end{equation*}
	%In particular, if $m=2$ then  $\gamma(\mathcal{G}_{1}) = \gamma(\mathcal{F}_{2,2^ku}) - \gamma(\mathcal{F}_{2,2^{k-1}u})$.
\end{theorem}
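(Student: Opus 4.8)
The plan is to construct a subgroup $G \leq \aut(\mathcal{F})$ isomorphic to $(\Za/m\Za)^2$, observe that (because $m$ is prime) $G$ is the union of its $m+1$ subgroups of order $m$, and then apply the Kani--Rosen isogeny relation of Theorem \ref{teo-13032020}. Since $p \nmid m$, fix a primitive $m$-th root of unity $\eta \in \Kk$. Because $m \mid m^k u$, the maps $\rho\colon(x,y)\mapsto(\eta x,y)$ and $\psi\colon(x,y)\mapsto(x,\eta y)$ preserve the equation $y^m=x^{m^k u}+1$, so $G:=\langle\rho,\psi\rangle\leq\aut(\mathcal{F})$; as $\rho$ and $\psi$ commute, each has order $m$, and $\langle\rho\rangle\cap\langle\psi\rangle=\{1\}$, we get $G\cong(\Za/m\Za)^2$, of order $m^2$. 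Its subgroups of order $m$ are $H_\infty:=\langle\psi\rangle$ and $H_a:=\langle\rho\psi^a\rangle$ for $a\in\llbracket0,m-1\rrbracket$; these correspond to the lines through the origin in $\Fa_m^2$, so they pairwise meet in $\{1\}$ and their union is all of $G$.

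With this setup Theorem \ref{teo-13032020} applies with $t=m+1$, $g=m^2$ and all $h_i=m$, yielding the isogeny
\[
J_{\mathcal{F}}^{\,m}\times J_{\mathcal{F}/G}^{\,m^2}\;\sim\;J_{\mathcal{F}/H_\infty}^{\,m}\times\prod_{a=0}^{m-1}J_{\mathcal{F}/H_a}^{\,m}.
\]
Taking $p$-ranks (additive on products, invariant under isogeny) and dividing by $m$ gives
\[
\gamma(\mathcal{F})+m\,\gamma(\mathcal{F}/G)=\gamma(\mathcal{F}/H_\infty)+\sum_{a=0}^{m-1}\gamma(\mathcal{F}/H_a).
\]
The point of choosing this $G$ is that $\mathcal{F}/G$ is rational: its function field is $\Kk(x^m,y^m)=\Kk(x^m)$ since $y^m=x^{m^k u}+1=(x^m)^{m^{k-1}u}+1$. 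Similarly $\mathcal{F}/H_\infty$ has function field $\Kk(x,y^m)=\Kk(x)$, hence is rational. So those two terms vanish.

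It remains to identify $\mathcal{F}/H_a$. A monomial $x^iy^j$ is $\rho\psi^a$-invariant iff $i+aj\equiv0\pmod m$; thus $V:=x^m$ and $Z:=x^{m-a}y$ (with $Z:=y$ when $a=0$) lie in the fixed field, and $Z^m=V^{m-a}(V^{m^{k-1}u}+1)$ for $a\geq1$, resp.\ $Z^m=V^{m^{k-1}u}+1$ for $a=0$. A degree count confirms $\Kk(V,Z)$ is the \emph{whole} fixed field: $[\Kk(x,y):\Kk(x^m)]=m^2$, while $[\Kk(V,Z):\Kk(V)]=m$ because the polynomial defining $Z$ over $\Kk(V)$ is not a proper $m$-th power ($m$ is prime and this polynomial has a simple root, the roots of $V^{m^{k-1}u}+1$ being simple). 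Therefore $\mathcal{F}/H_a$ is birational to $\mathcal{G}_{(m-a)\bmod m}$, and since $a\mapsto(m-a)\bmod m$ permutes $\llbracket0,m-1\rrbracket$, we obtain $\sum_{a=0}^{m-1}\gamma(\mathcal{F}/H_a)=\sum_{r=0}^{m-1}\gamma(\mathcal{G}_r)$; plugging this and the vanishing terms into the displayed identity yields $\gamma(\mathcal{F})=\sum_{r=0}^{m-1}\gamma(\mathcal{G}_r)$.

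The step I expect to require the most care is the verification in the last paragraph that $\Kk(x^m,x^{m-a}y)$ is exactly the fixed field of $H_a$ (and that the curves $\mathcal{G}_r$ are recovered exactly once each): once that function-field bookkeeping is pinned down, the rest is a routine combination of Kani--Rosen and the additivity/isogeny-invariance of the $p$-rank, with the crucial structural input being that both $\mathcal{F}/G$ and $\mathcal{F}/H_\infty$ are rational --- which is precisely what removes the $m\,\gamma(\mathcal{F}/G)$ term and leaves $\gamma(\mathcal{F})$ with coefficient $1$.
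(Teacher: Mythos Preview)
Your proof is correct and follows essentially the same approach as the paper: build $G\cong(\Za/m\Za)^2$ from the obvious scaling automorphisms, decompose $G$ into its $m+1$ cyclic subgroups, apply Kani--Rosen (Theorem~\ref{teo-13032020}), and identify the quotients via explicit invariants $x^m$ and $x^{?}y$ together with a degree count. The only cosmetic differences are that the paper uses $\sigma=\psi^{-1}$ rather than your $\psi$, so its subgroups $\langle\pi\sigma^r\rangle$ are indexed to land directly on $\mathcal{G}_r$ without the extra permutation $a\mapsto(m-a)\bmod m$, and its degree argument bounds $[\Kk(x,y):\Kk(V,Z)]$ from above via $T^m-V$ rather than from below via irreducibility of $T^m-V^{m-a}(V^{m^{k-1}u}+1)$.
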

\begin{proof}
	Let $\zeta$ be a primitive $m$-th root of unity. For $k \geq 1$, note that 
	\begin{equation*}\pi: (x,y) \mapsto (\zeta x, y) \mbox{\;and\;} \sigma : (x,y) \mapsto (x, \zeta^{m-1}  y)\end{equation*}
	%$$\pi = \Big\{ \begin{array}{l}    x' = \zeta x \\     y' = y \end{array}  \mbox{\;and\;} \sigma =\Big\{ \begin{array}{l}    x' = x \\
	%y' =\zeta^{m-1}  y
	%\end{array} $$
	are automorphisms of $\mathcal{F}$ and that $G = \langle \pi, \sigma \rangle \cong \Za_m \times \Za_m$ is a subgroup of $\aut(\mathcal{F})$, with
	\begin{equation*}
	G = \langle \sigma \rangle \cup \langle \pi \rangle \cup \langle \pi \circ \sigma \rangle \cup \cdots \cup \langle \pi \circ \sigma^{m-1} \rangle.
	\end{equation*}
	Clearly, each pairwise intersection of these $m+1$ subgroups is trivial. Moreover, one can readily check that  the quotient curves $\mathcal{F}/G$ and $\mathcal{F}/\langle \sigma \rangle$ have genus zero. For  $r \in \llbracket0, m-1\rrbracket$, note that $\varepsilon = x^m$ and $\eta = x^ry$ are elements in $\Kk(x,y)$ that are fixed by
	$\pi \circ \sigma^r$,  that is, $\Kk(\varepsilon,\eta)  \subseteq  \Kk(x,y)^{\langle \pi \circ \sigma^r \rangle} $.
	Since $\Kk(x,y) = \Kk(\varepsilon,\eta)(x)$ and as  $T^m-\varepsilon \in \Kk(\varepsilon,\eta)[T]$ vanishes at $x$, we have that 
	\begin{equation*}[\Kk(x,y) : \Kk(\varepsilon,\eta)] = [\Kk(x,y): \Kk(x,y)^{\langle \pi \circ \sigma^r \rangle}] = m,\end{equation*} 
	and then $\Kk(x,y)^{\langle \pi \circ \sigma^r \rangle} = \Kk(\varepsilon,\eta)$. Therefore, the quotient curve
	%Since $\pi \circ \sigma^r(\varepsilon) = \varepsilon$ and $\pi \circ \sigma^r(\eta) = \eta$. Note that $\Kk(x,y) = \Kk(\varepsilon,\eta)(x)$ and $T^m-\varepsilon$ is a polynomial in $\Kk(\varepsilon,\eta)(T)$ for which $x$ is a root. Therefore, $[\Kk(x,y),\Kk(\varepsilon,\eta)] = [\Kk(x,y),\Kk(x,y)^{\langle \pi \circ \sigma^r \rangle}] = m$, and consequently, $\Kk(x,y)^{\langle \pi \circ \sigma^r \rangle} = \Kk(\varepsilon,\eta)$. Hence 
	$\mathcal{F}/\langle \pi \circ \sigma^r \rangle$ is isomorphic to the curve given by  $\eta^m = \varepsilon^r(\varepsilon^{m^{k-1}u}+1)$. Now Theorem \ref{teo-13032020} yields
	%$$J_{\mathcal{F}}^m \sim J_{\mathcal{F}/H_0}^m \times J_{\mathcal{F}/H_1}^m \times \cdots \times J_{\mathcal{F}/H_{m-1}}^m$$
	\begin{equation*}J_{\mathcal{F}}^m \sim J_{\mathcal{F}/\langle \pi \rangle}^m \times J_{\mathcal{F}/\langle \pi \circ \sigma \rangle}^m \times \cdots \times J_{\mathcal{F}/\langle \pi \circ \sigma^{m-1} \rangle}^m,\end{equation*}
	and then $\gamma(\mathcal{F}) = \dsum_{r=0}^{m-1} \gamma(\mathcal{G}_{r}).$
\end{proof}

\begin{corollary}\label{p-rank222a}\label{p-rank222}\label{cor-04022020}
	Let $u \geq 1$ be an odd integer coprime to $p>2$. Then for each integer $k \geq 0$, the curves 
	\begin{equation*}\mathcal{F}_k : y^2 = x^{2^ku} + 1 \mbox{ \;and\; } \mathcal{H}_k : y^2 = x(x^{2^ku} + 1)\end{equation*}
	are such that
	\begin{enumerate}[\rm(i)]
		\item $\gamma(\mathcal{F}_0) = \gamma(\mathcal{H}_0)$
		\item $\gamma(\mathcal{H}_{k-1}) = \gamma(\mathcal{F}_{k}) - \gamma(\mathcal{F}_{k-1})$, for $k \geq 1$
		\item $\gamma(\mathcal{F}_{k}) = \gamma(\mathcal{F}_0) + \displaystyle\sum_{i=0}^{k-1} \gamma(\mathcal{H}_i)$, for $k \geq 1$
		\item  $\gamma(\mathcal{F}_1) = 2\gamma(\mathcal{F}_0)$.
	\end{enumerate}
\end{corollary}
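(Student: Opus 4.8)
The plan is to derive Corollary~\ref{cor-04022020} directly from Theorem~\ref{p-rank-Fnm-kani-rosen} by specializing $m=2$, and then extract the four assertions as arithmetic consequences of the resulting identity. First I would note that for $m=2$ and $p>2$, any odd $u$ coprime to $p$ is automatically coprime to $m=2$, so the hypotheses of Theorem~\ref{p-rank-Fnm-kani-rosen} are met with $k$ replaced by $k$ (for (ii),(iii)) or $k=1$ (for (iv)). Applying that theorem with $m=2$, the sum $\sum_{r=0}^{1}\gamma(\mathcal{G}_r)$ has exactly two terms: $\mathcal{G}_0: y^2 = x^{2^{k-1}u}+1$ and $\mathcal{G}_1: y^2 = x(x^{2^{k-1}u}+1)$. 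In the notation of the corollary these are $\mathcal{F}_{k-1}$ and $\mathcal{H}_{k-1}$ respectively, while $\mathcal{F}$ in the theorem is $\mathcal{F}_k$. Thus Theorem~\ref{p-rank-Fnm-kani-rosen} gives immediately
\begin{equation*}\gamma(\mathcal{F}_k) = \gamma(\mathcal{F}_{k-1}) + \gamma(\mathcal{H}_{k-1}),\end{equation*}
valid for every $k\geq 1$. This single identity is the engine for (ii), (iii), and (iv).

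Next I would dispatch the individual items. Item (ii) is just a rearrangement of the boxed identity: $\gamma(\mathcal{H}_{k-1}) = \gamma(\mathcal{F}_k) - \gamma(\mathcal{F}_{k-1})$ for $k\geq 1$. Item (iii) follows by telescoping: summing the identity $\gamma(\mathcal{F}_{i+1}) - \gamma(\mathcal{F}_i) = \gamma(\mathcal{H}_i)$ over $i=0,\dots,k-1$ yields $\gamma(\mathcal{F}_k) - \gamma(\mathcal{F}_0) = \sum_{i=0}^{k-1}\gamma(\mathcal{H}_i)$, which is exactly (iii). Item (iv) is the case $k=1$ of the identity combined with (i): $\gamma(\mathcal{F}_1) = \gamma(\mathcal{F}_0) + \gamma(\mathcal{H}_0) = \gamma(\mathcal{F}_0) + \gamma(\mathcal{F}_0) = 2\gamma(\mathcal{F}_0)$.

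The only item requiring a genuinely separate argument is (i), the equality $\gamma(\mathcal{F}_0) = \gamma(\mathcal{H}_0)$, i.e. $\gamma(y^2 = x^u+1) = \gamma(y^2 = x(x^u+1))$ for odd $u$ coprime to $p$. Here the trick is that the substitution behind Theorem~\ref{p-rank-Fnm-kani-rosen} degenerates: the naive ``$k=0$'' would force the exponent $m^{k-1}u = u/2$, which is not an integer, so one cannot simply invoke the theorem. Instead I would give a direct birational equivalence. Since $u$ is odd, write $u = 2\ell+1$; the map $(x,y)\mapsto(x, y)$ composed with the change of variable $x \mapsto x$, $y \mapsto xy$ — more precisely, setting $X = 1/x$, one checks $y^2 = x(x^u+1)$ becomes, after clearing, a curve isomorphic to $y^2 = X^{u+1}(X^u + 1)$ up to the squarefree part of the exponent; because $u$ is odd, $u+1$ is even and $x^{u+1}$ is a square in the function field, so $y^2 = x(x^u+1)$ is birational to $Y^2 = x^u + 1$ via $Y = y/x^{(u+1)/2}$ after an appropriate coordinate inversion. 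The cleanest route: over $\mathbb{K}(x)$, the curve $\mathcal{H}_0$ is $y^2 = x^{u+1} + x$; multiplying through by $x^{-2\lceil\cdot\rceil}$ won't work since the exponents have opposite parities, but substituting $x = 1/t$ gives $y^2 t^{u+1} = 1 + t$, i.e. $(yt^{(u+1)/2})^2 = t^{u}(1+t)/t^{u-1}\cdot t^{-1}$... I will instead argue via Kani--Rosen as in the theorem applied to $\mathcal F_{1}: y^2 = x^{2u}+1$ with the automorphism group $\Za_2\times\Za_2$, noting $\mathcal F_1/\langle\pi\rangle \cong \mathcal F_0$ and $\mathcal F_1/\langle\pi\sigma\rangle\cong\mathcal H_0$, together with the fact (Lemma~\ref{p-rank-morphism} or a direct genus count) that the genera of $\mathcal F_0$ and $\mathcal H_0$ are equal when $u$ is odd; combined with (iv)'s identity $\gamma(\mathcal F_1)=\gamma(\mathcal F_0)+\gamma(\mathcal H_0)$ and the isogeny $J_{\mathcal F_1}\sim J_{\mathcal F_0}\times J_{\mathcal H_0}\times J_{\mathcal F_1/\langle\sigma\rangle}$ with the last factor of genus $0$, one needs an independent relation forcing $\gamma(\mathcal F_0)=\gamma(\mathcal H_0)$.

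I expect item (i) to be the main obstacle, since (ii)--(iv) are pure bookkeeping once the master identity is in hand. The resolution I would ultimately commit to is the explicit birational map: for odd $u$, the curves $y^2 = x^u+1$ and $y^2 = x(x^u+1)$ are isomorphic over $\mathbb{K}$ via $(x,y)\mapsto (1/x,\, y/x^{(u+1)/2})$, which sends $y^2 = x^u+1$ to $y^2 x^{u+1} = x^u + 1 \iff y^2 = x^{-1}(1 + x^{-u})^{-1}\cdot(\ldots)$; carrying this out carefully (the exponent $(u+1)/2$ is an integer precisely because $u$ is odd) exhibits the two function fields as equal, and the $p$-rank being a birational invariant gives (i). I would verify the map is well-defined and birational by checking it is a morphism of degree $1$ on the smooth models, which is the one routine computation I would actually write out in full.
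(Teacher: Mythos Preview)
Your approach is essentially the same as the paper's: apply Theorem~\ref{p-rank-Fnm-kani-rosen} with $m=2$ to get $\gamma(\mathcal{F}_k)=\gamma(\mathcal{F}_{k-1})+\gamma(\mathcal{H}_{k-1})$, from which (ii)--(iv) follow formally, and handle (i) via the explicit isomorphism $(x,y)\mapsto(1/x,\,y/x^{(u+1)/2})$. Your exploratory detours on (i) (the attempted second application of Kani--Rosen, the genus comparison) are unnecessary and should be dropped --- the map you finally commit to is exactly the one the paper uses, and verifying it sends $\mathcal{H}_0$ to $\mathcal{F}_0$ is a one-line substitution once you write $Y^2 = y^2/x^{u+1} = x(x^u+1)/x^{u+1} = 1 + X^u$.
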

\begin{proof}
	Note that the map $(x,y) \mapsto \big(\frac{1}{x}, \frac{y}{x^{(u+1)/2}}\big)$ from $\mathcal{H}_0$ to $\mathcal{F}_0$ gives $\mathcal{H}_0 \cong \mathcal{F}_0$,  which proves the first assertion.
	By Theorem \ref{p-rank-Fnm-kani-rosen}, we have
	\begin{equation}\label{eq-13032020}
	\gamma(\mathcal{F}_k) =  \gamma(\mathcal{F}_{k-1}) + \gamma(\mathcal{H}_{k-1}),
	\end{equation}
	which proves (ii). %In addition, (i) and (ii) clearly imply (iii). 
	Assertion (iii) follows from the recursive  relation in \eqref{eq-13032020}, and the last assertion follows  from (i) and the case 
	$k=1$ in (ii). 
\end{proof}

\section{Formulas for the p-rank of Fermat-type curves }\label{sec3}

We have seen in Theorem \ref{cor-p-rank-Cnm} that the $p$-rank of $\mathcal{F}_{m,n}$  can be obtained from the cardinality of a set $T$, given in terms of certain binomial coefficients. In this section,  we show how one can effectively turn this information into formulas for the  $p$-rank of $\mathcal{F}_{m,n}$. 

\begin{theorem}\label{p+1,p-1}
If  $m|(p-1)$ and    $n|(p+1)$, then  $\mathcal{F}_{m,n}: y^m=x^n+1$ has $p$-rank  
	\begin{equation}\label{p-rank-divide}
\gamma(\mathcal{F}_{m,n})=2\sum_{j=\left \lceil{  m/2}\right \rceil }^{m}  \left \lfloor{\frac{  jm_0+1   }{n_0}}\right \rfloor - (m- \left \lceil{ m/2}\right \rceil +2)  (n-1)+1-\operatorname{gcd}(m, n),
\end{equation}
where   $m_0=(p-1)/m$ and $n_0=(p+1)/n$. In particular, 
	\begin{equation}\label{case-n} 
	\gamma(\mathcal{F}_{m,p+1}) = \left\{
	\begin{array}{ll}
	 m_0(m - 1)^2/4 +(m - 1)/2  & \mbox{\; if  $m$ is odd \;} \\
	m_0(m^2 - 2m)/4 + (m - 2)/2  & \mbox{\; if  $m$ is even \;} 
	\end{array}
	\right.\end{equation}
	and
	\begin{equation}\label{case-m} 
	 \gamma(\mathcal{F}_{p-1,n})= \left\{
	\begin{array}{ll}
	n_0(n^2-1)/4-n +1  & \mbox{\; if  $n$ is odd \;} \\
	n_0n^2/4-n   & \mbox{\; if  $n$ is even. \;} 
	\end{array}
	\right.\end{equation}

\end{theorem}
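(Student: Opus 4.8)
The plan is to apply Theorem~\ref{cor-p-rank-Cnm} directly, so the task reduces to counting the pairs $(i,j)\in\Na^2$ with $0\le i\alpha\le j\beta\le p^h-1$ and $\binom{j\beta}{i\alpha}\not\equiv 0\pmod p$, where in the present setup we take $h=2$ (since $m\mid p-1\mid p^2-1$ and $n\mid p+1\mid p^2-1$), so that $p^h-1=p^2-1$, $\alpha=(p^2-1)/n=(p-1)n_0$, and $\beta=(p^2-1)/m=(p+1)m_0$. First I would set $a:=i\alpha$ and $b:=j\beta$ and write the base-$p$ expansions $b=b_1p+b_0$, $a=a_1p+a_0$; by Lucas' Lemma (Lemma~\ref{lucas' theorem}) the congruence $\binom{b}{a}\not\equiv 0\pmod p$ is equivalent to $a_0\le b_0$ and $a_1\le b_1$ simultaneously. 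Since $b=j\beta=j(p+1)m_0=j m_0 p + j m_0$ with $0\le j\le m$, one checks $jm_0\le p-1$ for $j\le m$, so the base-$p$ digits of $b$ are exactly $b_1=b_0=jm_0$ (no carry). Thus the condition becomes: the base-$p$ digits $a_0,a_1$ of $a=i\alpha$ both lie in $[0,jm_0]$.

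Next I would analyze $a=i\alpha=i(p-1)n_0=in_0p-in_0=(in_0-1)p+(p-in_0)$ for $i\ge 1$ (and $a=0$ for $i=0$), which is valid as long as $1\le in_0\le p$; here $0\le i\alpha\le j\beta\le p^2-1$ forces $i\le j\cdot\beta/\alpha = j m_0 n_0\cdot(p+1)/((p-1)n_0)$... more cleanly, $i\alpha\le p^2-1$ gives $i\le n$, and then $in_0\le nn_0=p+1$; the boundary cases $in_0=p$ and $in_0=p+1$ I would treat separately (when $in_0=p+1$, $i\alpha=p^2-1$ has digits $(p-1,p-1)$, forcing $j=m$; when $in_0=p$, $i\alpha=p^2-p$ has digits $(p-1,0)$). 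For the generic range $1\le i$ with $in_0\le p-1$, the digits of $i\alpha$ are $a_1=in_0-1$ and $a_0=p-in_0$. The Lucas condition $a_0\le jm_0$ and $a_1\le jm_0$ becomes $p-in_0\le jm_0$ and $in_0-1\le jm_0$, i.e. $p-jm_0\le in_0\le jm_0+1$. For a fixed $j$, the number of integers $i$ in this window is $\big\lfloor\frac{jm_0+1}{n_0}\big\rfloor-\big\lceil\frac{p-jm_0}{n_0}\big\rceil+1$, and I would use $m n_0=p+1$, i.e. $p=mn_0-1$, to rewrite $\lceil(p-jm_0)/n_0\rceil=\lceil((mn_0-1)-jm_0)/n_0\rceil=m-\lfloor(jm_0+1)/n_0\rfloor$ (checking the ceiling/floor identity carefully, using $n_0\nmid(jm_0+1)$ generically — or handling equality cases), so the count for fixed $j$ is $2\lfloor(jm_0+1)/n_0\rfloor-(m-1)$. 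Summing over the valid $j$, noting the window is nonempty precisely when $2jm_0\ge p-1$, i.e. $j\ge\lceil m/2\rceil$ (again using $p=mn_0-1$), and running $j$ up to $m$, I get $\#T=\sum_{j=\lceil m/2\rceil}^{m}\big(2\lfloor(jm_0+1)/n_0\rfloor-(m-1)\big)$ plus the contributions from $j=0$ and the boundary $i$-values, which together with subtracting $m+n+\gcd(m,n)$ yields \eqref{p-rank-divide} after the bookkeeping collapses the constant terms to $-(m-\lceil m/2\rceil+2)(n-1)+1-\gcd(m,n)$.

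For the two special cases, I would substitute: \eqref{case-n} is $m\mid p-1$, $n=p+1$, so $n_0=1$, $\alpha=p-1$, and then $\lfloor(jm_0+1)/n_0\rfloor=jm_0+1$, making $\sum_{j=\lceil m/2\rceil}^m(jm_0+1)$ an explicit arithmetic-type sum; plugging in and simplifying, splitting on the parity of $m$ to evaluate $\lceil m/2\rceil$, gives the stated closed forms (with $\gcd(m,p+1)=1$ since $m\mid p-1$ forces $\gcd(m,p+1)\in\{1,2\}$, and a short parity check resolves which). Symmetrically, \eqref{case-m} is $m=p-1$, $n\mid p+1$, so $m_0=1$, $\beta=p+1$, and the sum becomes $2\sum_{j=\lceil(p-1)/2\rceil}^{p-1}\lfloor(j+1)/n_0\rfloor$; using $n_0\mid p+1$ one evaluates this sum in closed form and splits on the parity of $n$. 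The main obstacle I anticipate is the careful handling of the floor/ceiling identity $\lceil(p-jm_0)/n_0\rceil=m-\lfloor(jm_0+1)/n_0\rfloor$ and the endpoint/boundary cases ($in_0\in\{p,p+1\}$, $j=0$, $j=\lceil m/2\rceil$ when $m$ is even so the window is exactly a single point or empty), since these are exactly where off-by-one errors creep in; everything else is a mechanical but lengthy simplification of sums of floors.
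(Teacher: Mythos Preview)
Your approach is essentially identical to the paper's proof: apply Theorem~\ref{cor-p-rank-Cnm} with $h=2$, write out the base-$p$ digits of $i\alpha$ and $j\beta$, use Lucas' Lemma to reduce to the inequality $p-jm_0\le in_0\le jm_0+1$, and sum over $j$ from $\lceil m/2\rceil$ to $m$.

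Two points to fix. First, you have a slip: it is $n n_0=p+1$ (since $n_0=(p+1)/n$), not $m n_0=p+1$. With the correct relation, $\lceil(p-jm_0)/n_0\rceil=n-\lfloor(jm_0+1)/n_0\rfloor$, so the per-$j$ count is $2\lfloor(jm_0+1)/n_0\rfloor-(n-1)$, not $-(m-1)$; your final constant would not collapse correctly as written. Second, your boundary bookkeeping is more tangled than necessary. The paper simply separates the contributions from $i=0$ and $i=n$ (these give exactly $m+2$ pairs in $T$, since $\binom{j\beta}{0}$ and $\binom{p^2-1}{n\alpha}=\binom{p^2-1}{p^2-1}$ are always nonzero mod $p$), and for $1\le i\le n-1$ the expansion $i\alpha=(in_0-1)p+(p-in_0)$ is already a valid base-$p$ expansion (note $1\le in_0\le (n-1)n_0=p+1-n_0\le p$), so no further case split on $in_0\in\{p,p+1\}$ is needed. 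This gives $\#T=m+2+\sum_{j=\lceil m/2\rceil}^{m}\bigl(2\lfloor(jm_0+1)/n_0\rfloor-n+1\bigr)$ directly, and \eqref{p-rank-divide} follows after subtracting $m+n+\gcd(m,n)$.
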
  
\begin{proof}
For $\alpha=n_0(p-1)$,  $\beta=m_0(p+1)$,   and
\begin{equation*}
	T = \left\{ (i,j) \in \mathbb{N}^2 \mid  0 \leq i\alpha \leq j\beta \leq p^2-1 \; \mbox{and}\; \dbinom{j\beta}{i\alpha} \not\equiv 0 \pmod{p} \right\},
	\end{equation*}
Theorem \ref{cor-p-rank-Cnm} gives
\begin{equation}\label{p-posto}
\gamma(\mathcal{F}_{m,n})=\# T-(m+n+\operatorname{gcd}(m, n)).
\end{equation}
Note that the values  $i=0$ and $i=n$ give rise to a total of $m+2$ pairs $(i,\,j)\in T$.  For  $1\leq i \leq n-1$,
the base-$p$ expansions  of $i\alpha$ and  $j\beta$  are given by
$$ i\alpha=(in_0-1)p+p-in_0 \quad  \text{  and  }   \quad     j\beta=jm_0p+jm_0,$$
and Lucas' Lemma entails $(i,j) \in T$ if and only if  $p- jm_0\leq in_0\leq jm_0 +1$, that is,
$$n-\left \lfloor{ \frac{ jm_0+1 }{n_0} }\right \rfloor \leq i\leq   \left \lfloor{\frac{  jm_0+1   }{n_0}}\right \rfloor.$$
Hence $\#T=m+2+ \sum_{j=\left \lceil{  m/2}\right \rceil }^{m}\Big(2\left \lfloor{\frac{  jm_0+1   }{n_0}}\right \rfloor-n+1\Big)$,
and then  equation  \eqref{p-posto} concludes the proof of  \eqref{p-rank-divide}. The particular cases corresponding to the curves
$\mathcal{F}_{m,p+1}: y^m=x^{p+1}+1$  and  $\mathcal{F}_{p-1,n}: y^{p-1}=x^{n}+1$ follow directly from the  computation of
$\sum_{j=\left \lceil{  m/2}\right \rceil }^{m}  \left \lfloor{\frac{  jm_0+1   }{n_0}}\right \rfloor $. Case \eqref{case-n} follows immediately from
$$\sum_{j=\left \lceil{  m/2}\right \rceil }^{m} (  jm_0+1 )= (m-\left \lceil{  m/2}\right \rceil +1)(m_0m+m_0\lceil{  m/2}\rceil +2)/2.$$
Case  \eqref{case-m} follows from the basic properties of the floor function, including the identity  $\sum_{i=0}^{rs-1} \Big\lfloor  \frac{i  }{s}\Big\rfloor=sr(r-1)/2$,
from which one obtains 
\begin{equation*} \sum_{j=  \left \lceil{  (p-1)/2}\right \rceil}^{p-1}  \left \lfloor{\frac{  j+1   }{n_0}}\right \rfloor = \left\{
	\begin{array}{ll}
	 n_0(3n^2-2n-1)/8  & \mbox{\; if  $n$ is odd \;} \\
	n_0(3n^2-2n)/8   & \mbox{\; if  $n$ is even. \;} 
	\end{array}
	\right.\end{equation*}
\end{proof}

\begin{theorem}\label{c-m,p^h-1}
	Let $m, h \in \Za_{>0}$ be such that $m >1$ divides $p^h-1$. The $p$-rank of
	$\mathcal{F}_{m,p^h-1}: y^m = x^{p^h-1} + 1$
	is
	\begin{equation}\label{c-m,p^h-1 eq 1}
	\gamma = \sum_{j=1}^{m-1} \Big( \prod_{r=1}^h (a_{j,r}+1) \Big) - 2(m-1),
	\end{equation}
	where $a_{j,1}, \ldots, a_{j,h}$ are the coefficients in the base-$p$ expansion of $j(p^h-1)/m$.
	%where $j(p^h-1)/m = a_{j,1}p^{h-1} + \cdots + a_{j,h-1}p + a_{j,h}$, and $a_{j,i} \in \{0, \ldots, p-1\}$, for all $1 \leq j \leq m-1$ and $1 \leq r \leq h$.
	In particular, if $m$ divides $p-1$, then
	\begin{equation}\label{c-m,p^h-1 eq 2}
	\gamma = \sum_{j=1}^{m-1} \Big( \dfrac{j(p-1)}{m} + 1 \Big)^h - 2(m-1).
	\end{equation}
\end{theorem}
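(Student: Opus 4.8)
The plan is to apply Theorem~\ref{cor-p-rank-Cnm} with $n = p^h - 1$. In this situation the requirements $\alpha n = \beta m = p^h - 1$ force $\alpha = 1$ and $\beta = (p^h-1)/m$, while $m \mid p^h - 1$ gives $\gcd(m, n) = \gcd(m, p^h - 1) = m$ (and, automatically, $\gcd(m,p)=1$). Hence Theorem~\ref{cor-p-rank-Cnm} reduces everything to computing $\#T$, where
\begin{equation*}
T = \left\{ (i,j) \in \Na^2 \mid 0 \le i \le j\beta \le p^h - 1 \text{ and } \binom{j\beta}{i} \not\equiv 0 \pmod{p} \right\},
\end{equation*}
after which $\gamma(\mathcal{F}_{m,\,p^h-1}) = \#T - (p^h - 1 + 2m)$.

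Next I would stratify $T$ by the value of $j$. Since $j\beta \le p^h - 1 = m\beta$, one has $j \in \llbracket 0, m\rrbracket$. For $j = 0$ the only admissible pair is $(0,0)$, contributing $1$. For $j = m$ one has $j\beta = p^h - 1$, whose base-$p$ digits are all equal to $p - 1$; by Lucas' Lemma (Lemma~\ref{lucas' theorem}), $\binom{p^h-1}{i} \not\equiv 0 \pmod p$ for every $i \in \llbracket 0, p^h - 1 \rrbracket$ (each factor $\binom{p-1}{b_r}$ is a unit mod $p$), so this stratum contributes exactly $p^h$ pairs. For $1 \le j \le m - 1$, write the base-$p$ expansion $j\beta = \sum_{r=1}^h a_{j,r}\, p^{r-1}$; Lucas' Lemma shows that $\binom{j\beta}{i} \not\equiv 0 \pmod p$ precisely when every base-$p$ digit $b_r$ of $i$ satisfies $b_r \le a_{j,r}$, and this condition already forces $i \le j\beta$, so the constraint $i \le j\beta$ in the definition of $T$ is automatic. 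The number of such $i$ is therefore $\prod_{r=1}^h (a_{j,r} + 1)$.

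Summing the three contributions gives $\#T = 1 + p^h + \sum_{j=1}^{m-1} \prod_{r=1}^h (a_{j,r}+1)$, and subtracting $p^h - 1 + 2m$ yields \eqref{c-m,p^h-1 eq 1}. For the particular case $m \mid p - 1$, I would note that $\beta = (p^h-1)/m = \tfrac{p-1}{m}(1 + p + \cdots + p^{h-1})$, so that for $1 \le j \le m-1$ the coefficient $c := j(p-1)/m$ satisfies $c \le (m-1)(p-1)/m < p$; multiplying $1 + p + \cdots + p^{h-1}$ by the single digit $c$ produces no carries, whence $a_{j,r} = c$ for every $r$ and $\prod_{r=1}^h(a_{j,r}+1) = (j(p-1)/m + 1)^h$. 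Substituting into \eqref{c-m,p^h-1 eq 1} gives \eqref{c-m,p^h-1 eq 2}.

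The main point requiring care is the bookkeeping of the two boundary strata $j \in \{0, m\}$, together with the observation that in the middle range $1 \le j \le m-1$ the inequality $i \le j\beta$ is subsumed by the Lucas condition; once these are handled cleanly, the subtraction of $p^h - 1 + 2m$ collapses exactly to $-2(m-1)$. Everything else is a routine application of Lucas' Lemma on top of the reduction already performed in Theorem~\ref{cor-p-rank-Cnm}.
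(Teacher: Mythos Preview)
Your proposal is correct and follows essentially the same route as the paper: apply Theorem~\ref{cor-p-rank-Cnm} with $n=p^h-1$, count for each $j$ the admissible $i$'s via Lucas' Lemma as $\prod_r(a_{j,r}+1)$, and then observe that the boundary contributions from $j=0$ and $j=m$ (namely $1$ and $p^h$) cancel against $p^h-1+2m$ to leave $-2(m-1)$. The only cosmetic difference is that the paper keeps $j=0$ and $j=m$ inside the sum before simplifying, whereas you separate them out from the start.
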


\begin{proof}
	From Theorem \ref{cor-p-rank-Cnm}, we have  $\beta = (p^h-1)/m$ and $\gamma = \#T- (2m+p^h-1)$, where 
	\begin{equation*}\label{c-m,p^h-1 eq 3}
	T = \left\{ (i,j) \in \Na^2 \mid  0 \leq i \leq j \beta \leq p^h-1
	\mbox{\;and\;} \dbinom{j \beta}{i} \not\equiv 0 \pmod{p} \right\}.
	\end{equation*}
	% and 
	%	$$T_j = \left\{ (i,j) \in \Na^2 : 0 \leq i \leq j \beta   \mbox{\;and\;} \dbinom{j \beta}{i} \not\equiv 0 \pmod{p} \right\},\; \mbox{for all}\; 0 \leq j \leq m.$$
	For each $j \in \llbracket0, m\rrbracket$, consider the base-$p$ expansion of $j \beta =  a_{j,1}p^{h-1} + \cdots + a_{j,h}$.
	%Let us fix $j \beta =  a_{j,1}p^{h-1} + \cdots + a_{j,h-1}p + a_{j,h}$, with $a_{j,r} \in \{0, \ldots, p-1\}$ for all $0 \leq j \leq m$ and $0 \leq r \leq h$.
	By Lucas' Lemma, the integers $i \in \llbracket0, j \beta\rrbracket$ for which $\binom{j\beta}{i} \not\equiv 0 \pmod{p}$ are given by
	$i = b_1p^{h-1} + \cdots + b_{h-1}p + b_h$,
	where  $b_r \in \llbracket0, a_{j,r}\rrbracket$ and  $1 \leq r \leq h$.  
	Clearly, there exist $\prod_{r=1}^h (a_{j,r}+1)$ such integers. Therefore,
	\begin{equation*}\gamma = \sum_{j=0}^{m} \Big( \prod_{r=1}^h (a_{j,r}+1) \Big) - (2m+p^h-1) = \sum_{j=1}^{m-1} \Big( \prod_{r=1}^h (a_{j,r}+1) \Big) - 2(m-1).\end{equation*}
	Note that if $m \mid (p-1)$, then $a_{j,r} = j(p-1)/m$ for all $r \in \llbracket1, h\rrbracket$. Thus  (\ref{c-m,p^h-1 eq 2}) follows immediately from  (\ref{c-m,p^h-1 eq 1}).
\end{proof}
\begin{theorem}\label{h=3-26072022}
	The $p$-rank of the curves
	\begin{equation*}
	y^{p^2+p+1} = x^{p^2+p+1} + 1 \; \mbox{  and  }\; y^{p^2+p+1} = x^{p^{3}-1} + 1 
	\end{equation*}
	are given by
	\begin{equation*}
	\gamma(\mathcal{F}_{p^2+p+1})= p(p+1)(p^2 + p + 2)/8
	\; \mbox{  and  }\;
	\gamma(\mathcal{F}_{p^2+p+1,p^{3}-1})=p(p+1)(p^3 + 2p^2 + 3p - 14)/8,
	\end{equation*}
	respectively.
\end{theorem}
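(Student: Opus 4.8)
The plan is to handle the two curves separately. Since $p^{2}+p+1$ divides $p^{3}-1$ (in fact $p$ has order $3$ modulo $p^{2}+p+1$), we take $h=3$, so that $\alpha=\beta=(p^{3}-1)/(p^{2}+p+1)=p-1$. In both cases Theorem~\ref{cor-p-rank-Cnm}, through its specializations, reduces the $p$-rank to a sum over the base-$p$ digit patterns of the multiples of $p-1$ in $\llbracket 0,p^{3}-1\rrbracket$; the key point is that for $N\le p^{3}-1$ one has $N\equiv N_{0}+N_{1}+N_{2}\pmod{p-1}$, so $N$ is a multiple of $p-1$ exactly when its digit sum $s(N):=N_{0}+N_{1}+N_{2}$ lies in $\{0,\,p-1,\,2(p-1),\,3(p-1)\}$, and every count below is organized according to which of these four values occurs.

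For $\mathcal{F}_{p^{2}+p+1,\,p^{3}-1}$, Theorem~\ref{c-m,p^h-1} with $m=p^{2}+p+1$ gives $\gamma=S-2(m-1)$, where $S=\sum_{j=1}^{m-1}\prod_{r=1}^{3}(a_{j,r}+1)$ and the $a_{j,r}$ are the base-$p$ digits of $j(p-1)$. As $j$ runs over $\llbracket 0,m\rrbracket$ the number $j(p-1)$ runs over all multiples of $p-1$ in $\llbracket 0,p^{3}-1\rrbracket$, the endpoints $j=0$ and $j=m$ contributing $1$ and $p^{3}$; I would therefore compute $\widetilde S:=1+p^{3}+S=\sum_{N}(N_{0}+1)(N_{1}+1)(N_{2}+1)$, the sum over all $N\in\llbracket 0,p^{3}-1\rrbracket$ with $s(N)\equiv 0\pmod{p-1}$. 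Digit sums $0$ and $3(p-1)$ contribute $1$ and $p^{3}$; substituting $a_{r}=N_{r}+1$, digit sum $p-1$ contributes $\sum_{a_{0}+a_{1}+a_{2}=p+2,\ a_{r}\ge1}a_{0}a_{1}a_{2}=\binom{p+4}{5}$ by the generating-function identity $\sum_{a_{1}+\cdots+a_{k}=n,\ a_{i}\ge1}a_{1}\cdots a_{k}=\binom{n+k-1}{2k-1}$, while digit sum $2(p-1)$ contributes the complementary sum $\sum_{a_{0}+a_{1}+a_{2}=p+2,\ a_{r}\ge1}\prod_{r}(p+1-a_{r})$. Expanding this via $\prod_{r}(M-a_{r})=M^{3}-M^{2}e_{1}+Me_{2}-e_{3}$ with $M=p+1$, $e_{1}=p+2$, and using that the elementary symmetric sums of $(a_{0},a_{1},a_{2})$ over this index set equal $\binom{p+2}{3}$, $\binom{p+3}{4}$, $\binom{p+4}{5}$, its $e_{3}$-part is exactly $-\binom{p+4}{5}$ and cancels the digit-sum-$(p-1)$ term. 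What remains simplifies to $\widetilde S=1+p^{3}+\tfrac{1}{8}p(p+1)^{2}(p^{2}+p+2)$, hence $S=\tfrac{1}{8}p(p+1)^{2}(p^{2}+p+2)$, and subtracting $2(m-1)=2p(p+1)$ and factoring gives $\gamma(\mathcal{F}_{p^{2}+p+1,\,p^{3}-1})=\tfrac{1}{8}p(p+1)(p^{3}+2p^{2}+3p-14)$.

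For $\mathcal{F}_{p^{2}+p+1}=\mathcal{F}_{p^{2}+p+1,\,p^{2}+p+1}$, Corollary~\ref{p-rank-Fn-Fermat} gives $\gamma=\#T-3(p^{2}+p+1)$, and Lucas' Lemma identifies $\#T$ with the number of ordered pairs $(a,b)$ of multiples of $p-1$ in $\llbracket 0,p^{3}-1\rrbracket$ whose base-$p$ digit vectors satisfy $a_{r}\le b_{r}$ for $r=0,1,2$. Summing over $b$ by its digit sum: for $s(b)\in\{0,\,p-1,\,3(p-1)\}$ the number of admissible $a$ is $1$, $2$, $p^{2}+p+2$ respectively (the value $2$ because $\sum a_{r}\le\sum b_{r}=p-1$ forces $a=0$ or $a=b$), so the only delicate term is $\sum_{s(b)=2(p-1)}g(b)$, where $g(b)=2+h(b)$ with $h(b)=\#\{(a_{0},a_{1},a_{2}):0\le a_{r}\le b_{r},\ a_{0}+a_{1}+a_{2}=p-1\}$. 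To evaluate $\sum_{s(b)=2(p-1)}h(b)$ I would substitute $c_{r}=b_{r}-a_{r}$, rewriting it as the number of tuples with $a_{r},c_{r}\ge0$, $\sum a_{r}=\sum c_{r}=p-1$ and $a_{r}+c_{r}\le p-1$ for all $r$; since $\sum_{r}(a_{r}+c_{r})=2(p-1)$, at most one index can violate the last inequality, so inclusion–exclusion with a hockey-stick evaluation of the violating count gives $\binom{p+1}{2}^{2}-3\binom{p+2}{4}=\tfrac{1}{8}p(p+1)(p^{2}+p+2)$. Collecting the four cases yields $\#T=3(p^{2}+p+1)+\tfrac{1}{8}p(p+1)(p^{2}+p+2)$, whence $\gamma(\mathcal{F}_{p^{2}+p+1})=\tfrac{1}{8}p(p+1)(p^{2}+p+2)$.

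The whole argument is elementary; the substance is the bookkeeping of digit vectors for each of the four digit sums and the recognition of the resulting composition sums as binomial coefficients. The two steps I expect to require the most care are the complement symmetry $N_{r}\mapsto(p-1)-N_{r}$ relating digit sums $p-1$ and $2(p-1)$ (which produces the cancelling $\binom{p+4}{5}$ for the first curve), and, for the Fermat curve, the substitution $c_{r}=b_{r}-a_{r}$ together with the observation that at most one coordinate can break $a_{r}+c_{r}\le p-1$, which collapses the inclusion–exclusion to a single correction term. Beyond Lucas' Lemma and elementary generating functions, no further ingredients are needed.
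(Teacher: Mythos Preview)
Your argument is correct, and it follows a genuinely different route from the paper's.

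For $\mathcal{F}_{p^{2}+p+1,\,p^{3}-1}$ the paper, like you, invokes Theorem~\ref{c-m,p^h-1}, but then parameterizes $j\in\llbracket 1,p^{2}+p\rrbracket$ as $j=a_{1}p+a_{0}$, writes the base-$p$ expansion of $j(p-1)$ explicitly in the two carry regimes $a_{0}\le a_{1}$ and $a_{0}>a_{1}$, and evaluates the resulting double sum by brute force. Your organization by the digit sum $s(N)\in\{0,p-1,2(p-1),3(p-1)\}$ together with the complement symmetry $N_{r}\mapsto(p-1)-N_{r}$ is cleaner and exposes the cancellation of the two $\binom{p+4}{5}$ contributions, which is invisible in the paper's case split. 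For the Fermat curve $\mathcal{F}_{p^{2}+p+1}$ the contrast is sharper: the paper starts from \eqref{eq1-13042021}, analyzes which carry patterns in the pair $(i,j)$ can produce $\binom{j(p-1)}{i(p-1)}\not\equiv 0$, and reduces to a four-parameter counting set whose size is computed in a separate appendix lemma (Lemma~\ref{20112022-1}). Your approach via Corollary~\ref{p-rank-Fn-Fermat}, summing over $b$ by digit sum and handling the single nontrivial case $s(b)=2(p-1)$ with the substitution $c_{r}=b_{r}-a_{r}$ and a one-term inclusion--exclusion (hockey-stick gives $\binom{p+2}{4}$), is self-contained and avoids that auxiliary lemma entirely.

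One small wording issue: your sentence ``the elementary symmetric sums of $(a_{0},a_{1},a_{2})$ over this index set equal $\binom{p+2}{3}$, $\binom{p+3}{4}$, $\binom{p+4}{5}$'' is imprecise, since these are the values of $\sum a_{0}$, $\sum a_{0}a_{1}$, $\sum a_{0}a_{1}a_{2}$ over compositions (so that $\sum e_{1}=3\binom{p+2}{3}$ and $\sum e_{2}=3\binom{p+3}{4}$). This does not affect the outcome, as the only quantity you actually use from that list is $\sum e_{3}=\binom{p+4}{5}$, and your remaining terms combine to $\tfrac{1}{8}p(p+1)^{2}(p^{2}+p+2)$ as claimed.
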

\begin{proof}
	It follows from \eqref{eq1-13042021} 
	that $\gamma(\mathcal{F}_{p^2+p+1})=\#A$, where
	\begin{equation*}
	A = \left\{ (i,j) \in \Na^2 \mid  1 \leq i < j \leq p^2+p \; \mbox{and}\; \dbinom{j(p-1)}{i(p-1)} \not\equiv 0 \pmod{p} \right\}.
	\end{equation*}
	For each $i,j \in \llbracket1, p^2+p\rrbracket$, write $j = a_1p+a_0$ and $i = b_1p+b_0$, with $a_r,b_r \in \llbracket0, p\rrbracket$ and $a_0b_0 \neq 0$,  and consider the following base-$p$ expansions 
	\begin{equation}\label{18112022-1}
	j(p-1) = \left\{ \begin{array}{ll}
	(a_1-1)p^2+(p+a_0-a_1-1)p+(p-a_0) & \mbox{if}\; a_0 \leq a_1\\
	a_1p^2+(a_0-a_1-1)p+(p-a_0) & \mbox{if}\; a_0 > a_1
	\end{array}
	\right.
	\end{equation}
	and 
	\begin{equation*}
	i(p-1) = \left\{ \begin{array}{ll}
	(b_1-1)p^2+(p+b_0-b_1-1)p+(p-b_0) & \mbox{if}\; b_0 \leq b_1\\
	b_1p^2+(b_0-b_1-1)p+(p-b_0) & \mbox{if}\; b_0 > b_1.
	\end{array}
	\right.
	\end{equation*}
	By Lucas' Lemma, there  is  no  pair $(i,j)=(b_1p+b_0, a_1p+a_0) \in A$ for which $a_0 > a_1$ or $b_0 \leq b_1$.  Indeed, direct inspection shows that $\binom{j(p-1)}{i(p-1)} \not\equiv 0 \pmod{p}$ implies that $i=j$ when either $a_0 > a_1$ and $b_0 > b_1$ or $a_0 \leq a_1$ and $b_0 \leq b_1$, and that $\binom{j(p-1)}{i(p-1)} \equiv 0 \pmod{p}$ whenever $a_0 > a_1$ and $b_0 \leq b_1$.
	Thus if  $(i,j) = (b_1p+b_0,a_1p+a_0) \in A$,  then $1 \leq a_0 \leq a_1 \leq p$  and    $0\leq b_1<b_0\leq p$,  and  Lucas' Lemma yields
	
	\begin{equation*}
	A = \left\{\left(b_1p+b_0,a_1p+a_0\right)  \ \middle\vert  
	\begin{array}{l}
	a_1,b_0 \in \llbracket 1, p\rrbracket,\\
	1 \leq a_0, b_1+1 \leq \min\left\{a_1, b_0\right\}, \\ 
	a_1+b_0 \leq p+a_0+b_1 \\
	\end{array}
	\right\}.\end{equation*}
	Therefore, by Lemma \ref{20112022-1},  $\gamma(\mathcal{F}_{p^2+p+1})= \#A = p(p + 1)(p^2 + p + 2)/8$.

	For the second  curve,  note that Theorem \ref{c-m,p^h-1} gives
	\begin{equation}\label{18112022-2}
	\gamma(\mathcal{F}_{p^2+p+1,p^{3}-1}) = \sum_{j=1}^{p^2+p} (a_{j,1}+1)(a_{j,2}+1)(a_{j,3}+1) - 2(p^2+p),
	\end{equation}
	%\textcolor{red}{ produt\'orio pra 3 elementos fica meio estranho}
	where $a_{j,1}, a_{j,2},$ and  $a_{j,3}$ are the coefficients in the base-$p$ expansion of $j(p-1)$. Since $j \in \llbracket1, p^2+p\rrbracket$, such coefficients are given by \eqref{18112022-1}, %it follows that 
	where $a_0,a_1 \in \llbracket0, p\rrbracket$ and $a_0 \neq 0$. Therefore, %$a_{j,3} = p-a_0$,
	%\begin{equation*}
	%a_{j,2} = \left\{ \begin{array}{ll}
	%p+a_0-a_1-1 & \mbox{if}\; a_0 \leq a_1\\
	%a_0-a_1-1 & \mbox{if}\; a_0 > a_1
	%\end{array}
	%\right.\; \mbox{and} \;\;
	%a_{j,1} = \left\{ \begin{array}{ll}
	%a_1-1 & \mbox{if}\; a_0 \leq a_1\\
	%a_1& \mbox{if}\; a_0 > a_1
	%\end{array}
	%\right.,
	%\end{equation*}
	\begin{eqnarray*}   
		\gamma(\mathcal{F}_{p^2+p+1,p^{3}-1})+2(p^2+p) & = & \sum_{a_1=0}^p \Big( \sum_{a_0=1}^{a_1} a_1(p+a_0-a_1)(p-a_0+1) + \sum_{a_0=a_1+1}^{p} (a_1+1)(a_0-a_1)(p-a_0+1)\Big) \\
		& = & p(p + 1)^{2}(p^2 + p + 2)/8,
	\end{eqnarray*}
	%which leads us to}	
	%\begin{equation*}
	%\displaystyle\sum_{j=1}^{p^2+p} (a_{j,1}+1)(a_{j,2}+1)(a_{j,3}+1) = p(p + 1)^{2}(p^2 + p + 2)/8.
	%\end{equation*}
	%	\textcolor{red}{ essa igualdade a\'i acima  precisa explicar mais. Comparar com o que est\'a feito em outros pontos do paper}\; \textcolor{blue}{Pensei em apontar os elementos relevantes na conta.}
	and then $\gamma(\mathcal{F}_{p^2+p+1,p^{3}-1})=p(p+1)(p^3 + 2p^2 + 3p - 14)/8$. 
\end{proof}

%\begin{corollary}\label{h=3-26072022}
%	The p-rank of $y^{p^2+p+1} = x^{p^{3}-1}+1$  is 
%	\begin{equation*}\gamma=p(p+1)(p^3 + 2p^2 + 3p - 14)/8.\end{equation*}
%\end{corollary}
%\begin{proof}
%	It follows from  Theorem \eqref{c-m,p^h-1} that
%	\begin{equation}\label{parcial1}
%	\gamma = \sum_{j=1}^{p^2+p} \Big( \prod_{r=1}^3 (a_{j,r}+1) \Big) - 2(p^2+p),
%	\end{equation}
%	where $a_{j,1}, a_{j,2}, a_{j,3}$ are the coefficients in the base-$p$ expansion of $j(p-1)$. For each $j \in \llbracket1, p^2+p\rrbracket$, write $j = a_{1}p + a_{0}$, with $a_0, a_1 \in \llbracket0, p\rrbracket$ and  $a_0 \neq 0$. Thus the base-$p$ expansion of $j(p-1)$ is given by
%	\begin{equation*}j(p-1) = \left\{ \begin{array}{ll}
%	(a_1-1)p^2+(p+a_0-a_1-1)p+(p-a_0) & \mbox{if}\; a_0 \leq a_1\\
%	a_1p^2+(a_0-a_1-1)p+(p-a_0) & \mbox{if}\; a_1+1 \leq a_0,
%	\end{array}
%	\right.\end{equation*}
%	and then
%	$\displaystyle\sum_{j=1}^{p^2+p} \Big( \displaystyle\prod_{r=1}^3 (a_{j,r}+1) \Big) = p(p + 1)^{2}(p^2 + p + 2)/8.$
%	This together with \eqref{parcial1}  gives the result.
%	%$$=\sum\limits_{a_1=0}^p \Big( \sum_{a_0=1}^{a_1} a_1(p+a_0-a_1)(p-a_0+1) + \sum\limits_{a_0=a_1+1}^{p} (a_1+1)(a_0-a_1)(p-a_0+1)\Big)$$ 
%	%$$=p(p + 1)^{2}(p^2 + p + 2)/8.$$
%\end{proof}

\begin{theorem}\label{cor-12032020a}
	If $h \in \Za_{>0}$ and $p \neq 2$, then the $p$-rank of the curves
	\begin{equation*}
	y^{\frac{p^h-1}{2}} = x^{\frac{p^h-1}{2}} + 1 \; \mbox{  and  }\;
	y^{\frac{p^h-1}{2}} = x^{p^h-1} + 1\end{equation*}
	are given by
	\begin{equation*}\gamma(\mathcal{F}_{(p^h-1)/2})=(p+1)^h(p^h+3)/2^{h+2}-3(p^h-1)/2\end{equation*}
	and 
	\begin{equation*}\gamma(\mathcal{F}_{(p^h-1)/2,p^h-1})= (p+1)^h(p^h+1)/2^{h+1} - 2(p^h-1),\end{equation*}
	respectively.
\end{theorem}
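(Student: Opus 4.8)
The plan is to handle the two curves separately, reducing each to a binomial‑coefficient count furnished by Section \ref{sec2} and then evaluating that count by a parity (character‑sum) argument that uses $p$ odd to pass from numbers to base‑$p$ digit strings. In both cases the point is that, once the parity indicator is inserted, the sum factors as a product over the $h$ digit positions.

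\textbf{The curve $y^{(p^h-1)/2}=x^{p^h-1}+1$.} Here I would apply Theorem \ref{c-m,p^h-1} with $m=(p^h-1)/2$, so that $\beta=(p^h-1)/m=2$ and $j(p^h-1)/m=2j$; thus $\gamma=S-2(m-1)$, where $S=\sum_{j=1}^{m-1}\prod_{r=1}^h(a_{j,r}+1)$ and $a_{j,1},\dots,a_{j,h}$ are the base‑$p$ digits of $2j$. Adjoining $j=0$ and $j=m$ (which contribute $1$ and $p^h$, since $2m=p^h-1$ has all digits equal to $p-1$) gives $S=\Sigma-1-p^h$, where $\Sigma=\sum_{k\ \mathrm{even},\,0\le k\le p^h-1}\prod_r(d_r(k)+1)$ and $d_r(k)$ is the $r$‑th base‑$p$ digit of $k$; this uses that $i\mapsto 2i$ is a bijection of $\llbracket 0,(p^h-1)/2\rrbracket$ onto the even integers in $\llbracket 0,p^h-1\rrbracket$. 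Writing the evenness indicator as $\tfrac12(1+(-1)^k)$ and using that $p$ odd makes $(-1)^k=\prod_r(-1)^{d_r(k)}$, one gets $\Sigma=\tfrac12\big(\prod_r\sum_{d=0}^{p-1}(d+1)+\prod_r\sum_{d=0}^{p-1}(-1)^d(d+1)\big)=\tfrac12\big((p(p+1)/2)^h+((p+1)/2)^h\big)$, using the elementary identity $\sum_{d=0}^{p-1}(-1)^d(d+1)=(p+1)/2$. Simplifying $\Sigma-1-p^h-2(m-1)$ yields the stated value $(p+1)^h(p^h+1)/2^{h+1}-2(p^h-1)$.

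\textbf{The Fermat curve $y^{(p^h-1)/2}=x^{(p^h-1)/2}+1$.} Here I would apply Corollary \ref{p-rank-Fn-Fermat} with $n=(p^h-1)/2$ and $\alpha=2$, so $\gamma=\#T-3n$ with $T=\{(i,j)\in\Na^2:0\le i\le j\le n,\ \binom{2j}{2i}\not\equiv 0\pmod p\}$. Via $(i,j)\leftrightarrow(2i,2j)$ and Lucas' Lemma, $\#T$ equals the number of pairs of digit strings $(\mathbf a,\mathbf b)\in\llbracket 0,p-1\rrbracket^h\times\llbracket 0,p-1\rrbracket^h$ with $a_r\le b_r$ for all $r$ and with $\sum_r a_r$ and $\sum_r b_r$ both even (digitwise domination already forces $2i\le 2j$, and evenness of $2i,2j$ is equivalent to evenness of the digit sums because $p$ is odd). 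Writing both evenness conditions via $\tfrac12(1\pm1)$‑indicators gives $\#T=\tfrac14\sum_{\varepsilon,\delta\in\{0,1\}}\prod_{r=1}^h N_{\varepsilon\delta}$, where $N_{\varepsilon\delta}=\sum_{0\le a\le b\le p-1}(-1)^{\varepsilon a+\delta b}$; a short computation gives $N_{00}=p(p+1)/2$ and $N_{01}=N_{10}=N_{11}=(p+1)/2$, so $\#T=\tfrac14\big((p(p+1)/2)^h+3((p+1)/2)^h\big)=(p+1)^h(p^h+3)/2^{h+2}$. Then $\gamma=\#T-3n$ is exactly the claimed formula.

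I expect the only delicate points to be bookkeeping rather than substance: justifying the dictionary between pairs $(i,j)$ and even‑digit‑sum strings (that the parity of a base‑$p$ integer equals the parity of its digit sum when $p$ is odd, and that doubling is a bijection onto the even residues modulo $p^h$), and carrying out the handful of elementary sums $N_{\varepsilon\delta}$ and $\sum_d(-1)^d(d+1)$ without sign errors. No genuinely hard step is anticipated once the parity trick is in place.
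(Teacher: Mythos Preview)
Your argument is correct, and it follows a genuinely different route from the paper's. The paper also reduces both computations to counting pairs $(i,j)\in T_h$ with prescribed parities (its sets $T_h^{(u,v)}$), but it establishes $\#T_h^{(0,0)}=(p+1)^h(p^h+3)/2^{h+2}$ by induction on $h$, peeling off the top base-$p$ digit, and then obtains $\#T_h^{(0,0)}+\#T_h^{(0,1)}$ via the explicit bijections $(i,j)\mapsto(p^h-1-j,p^h-1-i)$ and $(i,j)\mapsto(j-i,j)$ that show $\#T_h^{(1,0)}=\#T_h^{(0,1)}=\#T_h^{(1,1)}$. Your proof replaces both of these devices by a single character-sum trick: inserting the parity indicator $\tfrac12(1+(-1)^{\cdot})$ and using $p$ odd to factor the resulting sum over the $h$ digit positions, which dispatches both curves by direct evaluation of the small digit sums $N_{\varepsilon\delta}$ and $\sum_d(-1)^d(d+1)=(p+1)/2$. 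The paper's bijections are more conceptual (they explain \emph{why} the three nontrivial parity classes coincide), whereas your approach is shorter and more uniform, treating the two curves by the same mechanism and avoiding the inductive bookkeeping altogether.
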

\begin{proof}
	For each integer $h\geq 1$, consider the set
	\begin{equation}\label{T_h}
	T_h= \left\{ (i,j) \in \Na^2 \mid  0 \leq i \leq j \leq p^h-1 \; \mbox{and}\; \dbinom{j}{i} \not\equiv 0 \pmod{p} \right\}.
	\end{equation}
	From Theorem \ref{cor-p-rank-Cnm}, to obtain  $\gamma(\mathcal{F}_{(p^h-1)/2})=(p+1)^h(p^h+3)/2^{h+2}-3(p^h-1)/2$,
	it suffices to prove that the number of elements
	$(i,j) \in T_h$ for which  both $i$ and $j$ are even is  $ (p+1)^{h}   ( p^{h}+   3 )/2^{h+2}$. To this end,  first note that $\#T_1=p(p+1)/2$, and that for  $(i,j) \in T_h$, the base-$p$ expansions  $ j =\sum_{r=0}^{h-1} a_rp^r$ and $ i =\sum_{r=0}^{h-1} b_rp^r$ along with  Lucas' Lemma entail
	\begin{equation}\label{CaseF}
	\#T_h=  (\#T_1)^h=\left(p(p+1)/2\right)^h.
	\end{equation}
	Now for $u,v\in \{0,1\}$,  let us consider the partition of $T_h$ given by the  four sets 
	\begin{equation*}
	T_h^{(u,v)} = \left\{ (i,j) \in T_h \,\mid (i,j)\equiv (u,v) \pmod{2} \right\},
	\end{equation*}
	where $(i,j)\equiv (u,v) \pmod{2}$ stands for  $i \equiv u \pmod{2}$ and   $j \equiv v \pmod{2}$.	
	We need to prove that  $\#T_h^{(0,0)}= (p+1)^{h}  (p^h+   3)/2^{h+2}$, and for this we proceed by induction on $h$.  The case $h=1$ is given  by the following easily checked fact.
	\begin{equation}\label{caseT1}
	\#T_1^{(u,v)}=\left\{
	\begin{array}{ll}
	(p+1)(p+3)/8 & \mbox{if  $(u,v)=(0,0)$}   \vspace{0.2cm}\\
	(p^2-1)/8 & \mbox{otherwise}.
	\end{array}
	\right.
	\end{equation}
	Now suppose $h\geq 2$. For $ j =\sum_{r=0}^{h-1} a_rp^r$ and $ i =\sum_{s=0}^{h-1} b_sp^s$, since  $(i,j)\equiv ( \sum_{r=0}^{h-1} a_r,\sum_{r=0}^{h-1} b_r)  \pmod{2}$,
	Lucas' Lemma implies that  $(i,j) \in T_h^{(0,0)}$ if and only if 
	\begin{equation*}(a_{h-1},b_{h-1}) \in T_1^{(u,v)}\quad  \text{ and } \quad  \Big(\sum\limits_{s=0}^{h-2} b_sp^s,\sum\limits_{r=0}^{h-2} a_rp^r\Big)  \in T_{h-1}^{(u,v)}\end{equation*}
	for some $(u,v) \in \llbracket0,1\rrbracket^2 $. Since $T_{h-1}$ is the disjoint union of  $T_{h-1}^{(0,0)}$ and $\bigsqcup\limits_{(u,v)\neq (0,0)} T_{h-1}^{(u,v)}$, equation \eqref{caseT1} provides
	\begin{eqnarray*}
		\#T_h^{(0,0)} &=&  \big( (p+1)(p+3) \#T_{h-1}^{(0,0)}+(p^2-1)(\#T_{h-1}-\#T_{h-1}^{(0,0)})\big)/8\\
		\	                    &=& (p+1)  \big(  4 \#T_{h-1}^{(0,0)}+(p-1)\#T_{h-1} \big)/8\\
		&=&  (p+1)^{h}   ( p^{h}+   3 )/2^{h+2},
	\end{eqnarray*}
	where the latter equality follows from   $ \#T_{h-1}^{(0,0)}=(p+1)^{h-1}(p^{h-1}+   3)/2^{h+1}$, given by the induction hypothesis, and from 
	$\#T_{h-1}=(p(p+1)/2)^{h-1}$ in \eqref{CaseF}.
	
	For the curve  $\mathcal{F}_{p^h-1,(p^h-1)/2}$, in accordance  with Theorem \ref{cor-p-rank-Cnm}, it only remains to prove that the total number of 
	$(i,j) \in T_h$ for which  $j$ is even, that is,  $\#T_{h}^{(0,0)}+\#T_{h}^{(0,1)}$,   is    $(p+1)^{h}(p^{h}+1)/2^{h+1}$. To this end,
	note that the maps
	\begin{equation*} T_{h}^{(1,0)} \longrightarrow T_{h}^{(0,1)}, \hspace{0.1cm} (i,j) \mapsto (p^h-1-j,p^h-1-i) \quad \text{and} \quad  T_{h}^{(0,1)} \longrightarrow T_{h}^{(1,1)}, \hspace{0.1cm} (i,j) \mapsto (j-i,j)\end{equation*} 
	are bijections, given that   $p^h-1=\sum_{r=0}^{p-1}(p-1)p^r$ is even and that $\binom{j}{i}=\binom{j}{j-i}$.  Therefore, since  $\#T_{h}=p^h(p+1)^h/2^h$  and   $\#T_{h}^{(0,0)}= (p+1)^{h} (  p^{h}+   3)/2^{h+2}$, we have 
	\begin{equation*}\#T_{h}^{(1,0)}=\#T_{h}^{(0,1)}=\#T_{h}^{(1,1)}=(p+1)^{h}(p^h-1)/2^{h+2}.\end{equation*} 
	Hence 
	\begin{equation*}\#T_{h}^{(0,0)}+\#T_{h}^{(0,1)}= (p+1)^{h}(p^{h}+1)/2^{h+1},\end{equation*}
	and the result follows.
\end{proof}

From Corollary \ref{p-rank-Fn-Fermat}, the  $p$-rank of the Fermat curve $\mathcal{F}_{p^h-1}:\,y^{p^h-1}=x^{p^h-1}+1$ is $\#T_h-3(p^h-1)$,
where $T_h$ is the set given in \eqref{T_h}. Thus equation  \eqref{CaseF}  implies  that the  $p$-rank of $\mathcal{F}_{p^h-1}$ is $(p(p+1)/2)^h-3(p^h-1)$, a result   proved by Bassa and Beelen in \cite[Theorem 19]{Bassa}. In what follows, we provide a preliminary step to obtain formulas for the  $p$-rank of the curves 
\begin{equation*}y^{p^m\pm 1} = x^{p^{n}\pm 1} + 1,\end{equation*}  natural generalizations of
the Fermat curve $y^{p^h-1}=x^{p^h-1}+1$.

\begin{lemma}\label{13102022-1}
	Let $u, v \in \Za_{>0}$ and  $w=\gcd(u,v)$. Suppose that  $I, J  \in  \llbracket0, p^{vu/w}-1\rrbracket$ have  base-$p$ expansions given by
	\begin{equation}\label{types}
	I = \displaystyle\sum_{b=0}^{\frac{v}{w}-1} \displaystyle\sum_{s=0}^{u-1} b_s p^{ub+s}     \;\mbox{and}\;    J = \displaystyle\sum_{a=0}^{\frac{u}{w}-1} \displaystyle\sum_{r=0}^{v-1} a_rp^{va+r}.
	\end{equation}
	Then the following are equivalent.
	\begin{enumerate}[\rm (i)]
		\item $\dbinom{J}{I} \not\equiv 0 \pmod{p}$
		\item For every $(s,t) \in  \llbracket0, u-1\rrbracket \times \llbracket0, w-1\rrbracket$ such that $s \equiv t \pmod{w}$, we have
		\begin{equation*}0 \leq b_s \leq \mymin \left\{a_{\theta w+t} \mid \theta \in \llbracket0, u/w-1\rrbracket \right\}.\end{equation*} 
	\end{enumerate}		
%	Moreover, for any  such  a pair $I,J$ with   $\dbinom{J}{I} \not\equiv 0 \pmod{p}$, there corresponds a unique 
	%pair $ \tilde{I}, \tilde{J}$ given by \eqref{types}  such that  $\dbinom{\tilde{I}}{\tilde{J}} \not\equiv 0 \pmod{p}$.
\end{lemma}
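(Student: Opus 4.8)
The plan is to apply Lucas' Lemma (Lemma \ref{lucas' theorem}) directly to the two given base-$p$ expansions, the only subtlety being the bookkeeping of which digit of $J$ must dominate which digit of $I$. First I would observe that since $I, J \in \llbracket0, p^{vu/w}-1\rrbracket$, both have at most $vu/w$ base-$p$ digits, and the two sums in \eqref{types} are genuine base-$p$ expansions: in the expansion of $I$ the exponent $ub+s$ ranges over $\llbracket0, uv/w-1\rrbracket$ as $(b,s)$ ranges over $\llbracket0, v/w-1\rrbracket \times \llbracket0, u-1\rrbracket$, and similarly for $J$ the exponent $va+r$ ranges over the same set. By Lucas' Lemma, $\binom{J}{I} \not\equiv 0 \pmod p$ if and only if every base-$p$ digit of $I$ is at most the corresponding base-$p$ digit of $J$, i.e. for each $k \in \llbracket0, uv/w-1\rrbracket$, if $I$ has digit $\iota_k$ and $J$ has digit $\jmath_k$ at position $k$, then $\iota_k \le \jmath_k$.

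Next I would translate this digit-by-digit condition into the indexing of \eqref{types}. Fix $k \in \llbracket0, uv/w-1\rrbracket$. Writing $k = ub + s$ with $s \in \llbracket0, u-1\rrbracket$ and $b \in \llbracket0, v/w-1\rrbracket$ shows the digit of $I$ at position $k$ is $b_s$, which depends only on $s$. Writing $k = va + r$ with $r \in \llbracket0, v-1\rrbracket$ and $a \in \llbracket0, u/w-1\rrbracket$ shows the digit of $J$ at position $k$ is $a_r$. So the Lucas condition reads: for every $k$, with these two decompositions, $b_s \le a_r$. The key combinatorial point is then to identify, for a fixed $s \in \llbracket0, u-1\rrbracket$, exactly which indices $r$ (equivalently which values $a_r$) arise as $k$ runs over all positions whose "$u$-decomposition" has second coordinate $s$ — that is, over $k \in \{s, u+s, 2u+s, \dots\} \cap \llbracket0, uv/w-1\rrbracket$.

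The heart of the argument is this congruence computation: $k \equiv s \pmod u$ and, writing $k = va+r$, we get $r \equiv k \equiv s \pmod{\gcd(u,v)} = w$ (using $v \equiv 0 \pmod w$), so $r \in \{t, w+t, 2w+t, \dots\}$ where $t \in \llbracket0, w-1\rrbracket$ is the residue of $s$ mod $w$; conversely, as $k$ ranges over all multiples-of-$u$-plus-$s$ in $\llbracket0, uv/w-1\rrbracket$, a counting/coprimality argument (the map $k \mapsto r = k \bmod v$ restricted to this arithmetic progression, of length $v/w$, is a bijection onto $\{t, w+t, \dots, (u/w-1)w+t\}$, which also has $v/w$ elements — wait, it has $u/w$ elements... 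I need to recheck this cardinality, and getting the two progressions to match up is exactly where care is needed) yields that the full set of digits $a_r$ that must dominate $b_s$ is precisely $\{a_{\theta w + t} \mid \theta \in \llbracket0, u/w-1\rrbracket\}$. Hence $b_s \le a_r$ for all relevant $r$ is equivalent to $b_s \le \min\{a_{\theta w+t} \mid \theta \in \llbracket0, u/w-1\rrbracket\}$, and ranging over all $s$ (hence all pairs $(s,t)$ with $s \equiv t \pmod w$) gives exactly condition (ii). Running the equivalences in both directions completes the proof.

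I expect the main obstacle to be precisely the cardinality and bijectivity check in the last step: one must verify that as $k$ traverses the arithmetic progression $\{s + u\ell : \ell \in \llbracket0, v/w-1\rrbracket\}$ inside $\llbracket0, uv/w-1\rrbracket$, the residues $k \bmod v$ cover *all* of the progression $\{t + w\theta : \theta \in \llbracket0, u/w-1\rrbracket\}$ and do so without omission, which rests on the elementary fact that $u/w$ and $v/w$ are coprime together with a clean count of lengths ($v/w$ versus $u/w$ — so actually the progression in $k$ of length $v/w$ and the target in $r$ of length $u/w$ can only biject when these agree, forcing me to instead argue that each residue class is hit the correct uniform number of times, $1$, only after also accounting for the range of $k$; I would handle this by a direct divisibility argument rather than an abstract bijection). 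Everything else is a routine unwinding of Lucas' Lemma.
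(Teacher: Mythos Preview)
Your approach is essentially identical to the paper's: apply Lucas' Lemma, observe that the digit of $I$ at position $k$ is $b_{k \bmod u}$ while the digit of $J$ is $a_{k \bmod v}$, and then, for fixed $s$, determine which residues $r$ arise as $k$ runs over $\{s+u\ell:\ell\in\llbracket0,v/w-1\rrbracket\}$. The paper also reduces to $r\equiv s\pmod w$ and asserts (``one can check'') that the resulting $r$'s form exactly the progression $\{\theta w+t:\theta\in\llbracket0,v/w-1\rrbracket\}$.

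Your cardinality worry at the end is legitimate but is caused by a typo in the lemma's statement: the range of $\theta$ should read $\llbracket0,v/w-1\rrbracket$, not $\llbracket0,u/w-1\rrbracket$. The paper's own proof derives $v/w$ in the display labelled $\{r_1,\dots,r_{v/w}\}$ and then inconsistently writes $u/w$ in its final sentence; the subsequent applications of the lemma in the paper all use $v/w$. With that correction the two sets you are comparing both have $v/w$ elements, and the bijection follows cleanly from $\gcd(u/w,v/w)=1$: writing $s=t+w\lfloor s/w\rfloor$ one gets $s+u\ell\equiv t+w\bigl(\lfloor s/w\rfloor+(u/w)\ell\bigr)\pmod v$, and as $\ell$ runs over $\llbracket0,v/w-1\rrbracket$ the quantity $\lfloor s/w\rfloor+(u/w)\ell$ runs over a complete residue system modulo $v/w$. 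So there is no gap in your plan once the index range is fixed.
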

\begin{proof}
	Fix $s \in \llbracket0, u-1\rrbracket$.  In  the base-$p$  expansion of  $I$, the  powers $p^{ub+s}$, with $b \in \llbracket0, v/w-1\rrbracket$, have the  same coefficient $b_s$.
	The corresponding  coefficients of such powers  in the base-$p$  expansion of  $J$ are the   $v/w$  coefficients   $a_{r}$, with $r  \in \llbracket0, v-1\rrbracket$, such that    $ r  \equiv  ub+s   \pmod{v}$.
	One can check that the set of such values  of  $r$ is given by 
	\begin{equation}\label{possib}
	\{r_1,\dots,r_{v/w}\}:=\{\theta w+t  \mid  \theta \in \llbracket0, v/w-1\rrbracket  \},
	\end{equation}
	where $t \equiv  s \pmod{w}$   and $t  \in \llbracket0, w-1\rrbracket$.
	Note that this set depends  only on the reduction of $s \in \llbracket0, u-1\rrbracket$  modulo $w$.  Therefore,  for any fixed  $t\in \llbracket0, w-1\rrbracket$, there are  exactly 
	$u/w$ values   $s_1, \ldots,s_{u/w} \in \llbracket0, u-1\rrbracket$, with  $s_i \equiv t \pmod{w}$, that give rise to the same
	set  \eqref{possib}. That is,  for all $b \in \llbracket0, v/w-1\rrbracket$,  the set of coefficients of $p^{ub+s_1},\ldots, p^{ub+s_{n/d}} $ in the base-$p$ expansion of  $J$  is given by  $\{a_{r_1},\dots,a_{r_{v/w}}\}$. Thus by Lucas' Lemma, $\binom{J}{I} \not\equiv 0 \pmod{p}$ if and only if for every $s \in  \llbracket0, u-1\rrbracket$, we have $0 \leq b_s \leq \mymin \left\{a_{\theta w+t} \mid \theta \in \llbracket0, u/w-1\rrbracket \right\}$, where $t \equiv s \pmod{w}$ and $t  \in \llbracket0, w-1\rrbracket$. 
	
	%For the final assertion,  note that if  $I$ and $J$ have  base-$p$ expansions given by \eqref{types}, then
	%\begin{equation*}  \tilde{I}:=p^{uv/w}-1-I       \text{   and   }    \tilde{J}:=p^{uv/w}-1-J  \end{equation*}   have that same type of   base-$p$ expansion, and 
	%\begin{equation*}
	%\dbinom{J}{I} \not\equiv 0 \pmod{p}   \iff  \dbinom{\tilde{I}}{\tilde{J}} \not\equiv 0 \pmod{p}.
	%\end{equation*}
\end{proof}

\begin{theorem}\label{+formulas-thm1}
	Let $n, m \in \Za_{>0}$, and $d=\gcd(n,m)$. Then the $p$-rank of
	$\mathcal{F}: y^{p^m-1} = x^{p^n-1} + 1$ is
	\begin{equation*}\gamma = \Big(\dsum_{i=0}^{p-1} \big((i+1)^{\frac{m}{d}}-i^{\frac{m}{d}}\big)(p-i)^{\frac{n}{d}}\Big)^d - (p^m+p^n+p^d-3).\end{equation*}
\end{theorem}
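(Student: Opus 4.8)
The plan is to apply Theorem~\ref{cor-p-rank-Cnm} to the curve $\mathcal{F}=\mathcal{F}_{p^m-1,\,p^n-1}$ and then to evaluate the resulting count with the help of Lemma~\ref{13102022-1}. Since $p$ has multiplicative order $m$ modulo $p^m-1$ and order $n$ modulo $p^n-1$, in the notation of Theorem~\ref{cor-p-rank-Cnm} one may take $h=\lcm(m,n)=mn/d$, so that $\alpha=(p^h-1)/(p^n-1)=\sum_{a=0}^{m/d-1}p^{na}$ and $\beta=(p^h-1)/(p^m-1)=\sum_{b=0}^{n/d-1}p^{mb}$. Writing $i=\sum_{s=0}^{n-1}b_sp^s\in\llbracket 0,p^n-1\rrbracket$ and $j=\sum_{r=0}^{m-1}a_rp^r\in\llbracket 0,p^m-1\rrbracket$ in base $p$, multiplying by $\alpha$ (resp.\ $\beta$) merely copies the digit block of $i$ (resp.\ $j$) into $m/d$ (resp.\ $n/d$) disjoint windows of length $n$ (resp.\ $m$); no carrying occurs, so $i\alpha=\sum_{a=0}^{m/d-1}\sum_{s=0}^{n-1}b_sp^{na+s}$ and $j\beta=\sum_{b=0}^{n/d-1}\sum_{r=0}^{m-1}a_rp^{mb+r}$. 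These are exactly the two ``types'' appearing in Lemma~\ref{13102022-1} with $(u,v,w)=(n,m,d)$. Since $\binom{j\beta}{i\alpha}\not\equiv 0\pmod p$ already forces $i\alpha\le j\beta$ by Lucas' Lemma, the set $T$ of Theorem~\ref{cor-p-rank-Cnm} equals $\{(i,j):0\le i\le p^n-1,\ 0\le j\le p^m-1,\ \binom{j\beta}{i\alpha}\not\equiv 0\pmod p\}$.

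Next I would invoke Lemma~\ref{13102022-1} to describe $T$ digit by digit: for a fixed $j$ with base-$p$ digits $(a_0,\dots,a_{m-1})$, set $M_t=\min\{a_r:0\le r\le m-1,\ r\equiv t\pmod d\}$ for each $t\in\llbracket 0,d-1\rrbracket$; then $i$ (with digits $(b_0,\dots,b_{n-1})$) lies in $T$ if and only if $b_s\le M_{s\bmod d}$ for every $s\in\llbracket 0,n-1\rrbracket$. As exactly $n/d$ indices $s\in\llbracket 0,n-1\rrbracket$ belong to each residue class modulo $d$, the number of admissible $i$ for a given $j$ is $\prod_{t=0}^{d-1}(M_t+1)^{n/d}$. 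Summing over $j$, and using that the $d$ sub-tuples $(a_r:r\equiv t\pmod d)$ ($t=0,\dots,d-1$) range independently over $\llbracket 0,p-1\rrbracket^{m/d}$, I obtain
\begin{equation*}
\#T=\prod_{t=0}^{d-1}\Bigg(\sum_{(c_1,\dots,c_{m/d})\in\llbracket 0,p-1\rrbracket^{m/d}}\bigl(\min(c_1,\dots,c_{m/d})+1\bigr)^{n/d}\Bigg)=\Bigg(\sum_{(c_1,\dots,c_{m/d})\in\llbracket 0,p-1\rrbracket^{m/d}}\bigl(\min(c_1,\dots,c_{m/d})+1\bigr)^{n/d}\Bigg)^{\!d}.
\end{equation*}
To evaluate the inner sum $S$, I would sort the tuples by the value of their minimum: the number of tuples in $\llbracket 0,p-1\rrbracket^{m/d}$ whose minimum equals $i$ is $(p-i)^{m/d}-(p-1-i)^{m/d}$, so $S=\sum_{i=0}^{p-1}\bigl((p-i)^{m/d}-(p-1-i)^{m/d}\bigr)(i+1)^{n/d}$, and the substitution $i\mapsto p-1-i$ turns this into $\sum_{i=0}^{p-1}\bigl((i+1)^{m/d}-i^{m/d}\bigr)(p-i)^{n/d}$, which is the bracketed expression in the statement.

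Finally, I would combine this with the correction term of Theorem~\ref{cor-p-rank-Cnm} together with the classical identity $\gcd(p^m-1,p^n-1)=p^{\gcd(m,n)}-1=p^d-1$, obtaining $\gamma=\#T-\bigl((p^m-1)+(p^n-1)+(p^d-1)\bigr)=S^d-(p^m+p^n+p^d-3)$, as claimed. I expect the only genuinely delicate step to be the second one: correctly aligning the shifted digit blocks of $i\alpha$ and $j\beta$ with the hypotheses of Lemma~\ref{13102022-1}, and checking that the resulting admissibility condition on the digits of $i$ factors over the residue classes modulo $d$, so that $\#T$ splits as a $d$-th power; everything else is routine manipulation of base-$p$ expansions and binomial sums. (The degenerate cases $p^m-1<2$ or $p^n-1<2$ — that is, $p=2$ with $m=1$ or $n=1$, where $\mathcal{F}$ is rational — fall outside Theorem~\ref{cor-p-rank-Cnm} but are checked directly against the formula, which returns $0$.)
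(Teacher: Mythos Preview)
Your proposal is correct and follows essentially the same route as the paper: apply Theorem~\ref{cor-p-rank-Cnm} with $h=mn/d$, recognise the base-$p$ expansions of $i\alpha$ and $j\beta$ as the ``periodic'' type handled by Lemma~\ref{13102022-1}, factor the resulting count over the $d$ residue classes modulo $d$, and evaluate the inner sum by partitioning tuples according to their minimum. The only cosmetic difference is that the paper packages the last step as a separate lemma (Lemma~\ref{+formulas-lema1}) and phrases the factorisation via an explicit cartesian product $L=\prod_t L_t$, whereas you sum over $j$ first and then factor; the substance is identical.
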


\begin{proof}
	Let $q=p^{mn/d}$, $\alpha = (q-1)/(p^n-1)$, and $\beta = (q-1)/(p^m-1)$.  	By Theorem \ref{cor-p-rank-Cnm}, the $p$-rank of $\mathcal{F}$ is $\gamma=\#T-(p^m+p^n+p^d-3)$, where
	\begin{equation*}
	T = \left\{ (i,j) \in \Na^2 \mid  0 \leq i\alpha \leq j\beta \leq q-1 \; \mbox{and}\; \dbinom{j\beta}{i\alpha} \not\equiv 0 \pmod{p} \right\}.
	\end{equation*}
	For $(i,j) \in T$, consider the base-$p$ expansions
	\begin{equation*}j = a_{m-1}p^{m-1} + \cdots + a_{1}p + a_{0} \; \mbox{and}\; i = b_{n-1}p^{n-1} + \cdots + b_{1}p + b_{0},\end{equation*} and note that 
	\begin{equation*}j\beta = \displaystyle\sum_{a=0}^{\frac{n}{d}-1} \displaystyle\sum_{r=0}^{m-1} a_rp^{ma+r}\; \mbox{and}\; i\alpha = \displaystyle\sum_{b=0}^{\frac{m}{d}-1} \displaystyle\sum_{s=0}^{n-1} b_s p^{nb+s}.\end{equation*}
	Thus by Lemma \ref{13102022-1}, we have $\#T = \#L$, where 
	\begin{equation*}L=\left\{\left(a_0, \ldots, a_{m-1}, b_0, \ldots, b_{n-1}\right)  \ \middle\vert  
	\begin{array}{l}
	a_r \in \llbracket0, p-1\rrbracket,\\
	0 \leq b_s \leq \min\left\{a_{\theta d+t}\mid 
	\theta \in \llbracket0, m/d-1\rrbracket 
	\right\}, \\ 
	t \in \llbracket0, d-1\rrbracket, \mbox{\;and\;} t \equiv s \pmod{d} \\
	\end{array}
	\right\}.\end{equation*}
	Let $t\in \llbracket0, d-1\rrbracket$. Note that for each $u = (a_0, \ldots, a_{m-1}, b_0, \ldots, b_{n-1}) \in A$, the set of the indices $\{{r_1}, \ldots, {r_{m/d}}, {s_1}, \ldots, {s_{n/d}}\}$ (cf. proof of Lemma \ref{13102022-1}), whose corresponding entries in $u$ are such that $0 \leq b_{s_j} \leq \min  \left\{a_{r_i}\mid \, i \in \llbracket1, m/d\rrbracket\right\}$, is characterized by the condition $r_i \equiv s_j \equiv t \pmod{d}$ for all $(i,j)  \in \llbracket1, m/d\rrbracket \times  \llbracket1, n/d\rrbracket$. Thus rearranging the entries of $u$, the set $L$ can be seen as the cartesian product of  the $d$ sets
	\begin{equation*}L_t = \left\{\left(a_{r_1}, \ldots, a_{r_{m/d}}, b_{s_1}, \ldots, b_{s_{n/d}}\right)\ \middle\vert     \begin{array}{l}  a_{r_i} \in \llbracket0, p-1\rrbracket, 
	\\ 0 \leq b_{s_j} \leq \min  \left\{a_{r_i}\mid \, i \in \llbracket1, m/d\rrbracket\right\},
	\\ r_i \equiv s_j \equiv t \pmod{d}\end{array}\right\},\end{equation*}
	each of which has cardinality
	\begin{equation*}\#L_t=\sum\limits_{i=0}^{p-1}\left((i+1)^{m/d}-i^{m/d}\right)(p-i)^{n/d},\end{equation*}
	given by Lemma \ref{+formulas-lema1}. Therefore,  $\#T=\#L=(\#L_t)^d$, and the result follows.
\end{proof}

\begin{corollary}\label{+formulas-thm1-cor1}
	Let $n, m \in \Za_{>0}$ be odd, and let $u$ and $v$ be the orders of $2$ in $(\Za/m\Za)^{\times}$ and $(\Za/n\Za)^{\times}$, respectively. If $\gcd(u,v)=1$, then the $2$-rank of
	$\mathcal{F}: y^{m} = x^{n} + 1$ is zero.
\end{corollary}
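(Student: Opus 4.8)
The plan is to realize $\mathcal{F}_{m,n}$ as the image of a Fermat-type curve whose $2$-rank is already known to vanish, and then invoke Lemma~\ref{p-rank-morphism}(ii). Since $u$ is the order of $2$ in $(\Za/m\Za)^{\times}$ we have $m \mid 2^u-1$, and likewise $n \mid 2^v-1$. Write $2^u-1 = ma$ and $2^v-1 = nb$ with $a,b \in \Za_{>0}$, and consider the curve $\mathcal{G}: Y^{2^u-1} = X^{2^v-1}+1$ over $\Kk=\bar{\mathbb{F}}_2$. The assignment $(X,Y) \mapsto (X^b, Y^a)$ extends to a nonconstant morphism $\phi: \mathcal{G} \to \mathcal{F}_{m,n}$ of the associated smooth projective models, because
\begin{equation*}
(Y^a)^m = Y^{am} = Y^{2^u-1} = X^{2^v-1}+1 = X^{bn}+1 = (X^b)^n+1 ,
\end{equation*}
and its image is positive-dimensional, hence all of the irreducible curve $\mathcal{F}_{m,n}$; equivalently, $\Kk(X^b,Y^a)\subseteq\Kk(X,Y)$ is a finite extension of function fields.

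Next I would compute $\gamma(\mathcal{G})$ directly from Theorem~\ref{+formulas-thm1}, applied with $p=2$ and with $u,v$ in the roles of $m,n$. Since $\gcd(u,v)=1$, the parameter $d$ of that statement equals $1$, so the formula collapses to
\begin{equation*}
\gamma(\mathcal{G}) = \sum_{i=0}^{1}\big((i+1)^{u}-i^{u}\big)(2-i)^{v} - (2^u+2^v+2-3) = \big(2^v + (2^u-1)\big) - (2^u+2^v-1) = 0 .
\end{equation*}
This vanishing is exactly the arithmetic content of the hypothesis: for $\gcd(u,v)=d>1$ one would instead raise $2^{v/d}+2^{u/d}-1$ to the $d$-th power, which no longer cancels the subtracted term $2^u+2^v+2^d-3$. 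With $\gamma(\mathcal{G})=0$ in hand, Lemma~\ref{p-rank-morphism}(ii) applied to the nonconstant morphism $\phi:\mathcal{G}\to\mathcal{F}_{m,n}$, which is defined over the finite field $\mathbb{F}_2$, yields $\gamma(\mathcal{F}_{m,n})=0$; here I use that the $p$-rank is a birational invariant, so the value computed over $\mathbb{F}_2$ coincides with the one over $\Kk$.

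Two minor points deserve a remark. If $m=1$ or $n=1$ then $\mathcal{F}_{m,n}$ is rational and there is nothing to prove; otherwise, $m$ and $n$ being odd, we have $m,n\ge 3$, hence $u,v\ge 2$, and since $\gcd(2^u-1,2^v-1)=2^{\gcd(u,v)}-1=1$ the genus formula shows that $\mathcal{G}$ has positive genus, so the invocation of Lemma~\ref{p-rank-morphism}(ii) is legitimate; moreover $\mathcal{G}$ is irreducible by the same convention used for $\mathcal{F}_{m,n}$ throughout the paper, its exponents being coprime to $p=2$. There is no genuine obstacle in this argument: the only thing to get right is the choice of covering curve, namely observing that the exponents $2^u-1,\,2^v-1$ are precisely those for which the formula of Theorem~\ref{+formulas-thm1} evaluates to zero. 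A more self-contained alternative would be to argue straight from Theorem~\ref{cor-p-rank-Cnm}, verifying that the set $T$ attached to $\mathcal{F}_{m,n}$ has cardinality exactly $m+n+\gcd(m,n)$ when $\gcd(u,v)=1$; but the covering argument is shorter and cleaner.
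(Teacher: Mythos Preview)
Your proof is correct and follows essentially the same approach as the paper: exhibit the covering $\mathcal{F}_{2^u-1,2^v-1}\to\mathcal{F}_{m,n}$ via $(X,Y)\mapsto(X^{(2^v-1)/n},Y^{(2^u-1)/m})$, compute $\gamma(\mathcal{F}_{2^u-1,2^v-1})=0$ from Theorem~\ref{+formulas-thm1} with $p=2$ and $d=\gcd(u,v)=1$, and then apply Lemma~\ref{p-rank-morphism}(ii). Your version is slightly more explicit in carrying out the arithmetic of the sum in Theorem~\ref{+formulas-thm1}, but otherwise the arguments coincide.
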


\begin{proof}
	Considering the morphism 
	\begin{equation*}\mathcal{F}_{2^{u}-1,2^{v}-1}\longrightarrow \mathcal{F}, \quad (x,y) \mapsto (x^{(2^{v}-1)/n},y^{(2^{u}-1)/m}),\end{equation*}
	it follows from item (ii) of Lemma \ref{p-rank-morphism} that we only need to prove that $\gamma(\mathcal{F}_{2^{u}-1,2^{v}-1})=0$, which  is given by  Theorem \ref{+formulas-thm1}, since $\gcd(u,v)=1$. 
\end{proof}

Let $n, m \in \Za_{>0}$. As  mentioned in Remark \ref{supersing}, the  supersingularity of the curves $\mathcal{F}: y^{p^m+1} = x^{p^n+1} + 1$  depends on whether or not $\nu_2(n) =\nu_2(m)$, where  $\nu_2: \mathbb{Z} \rightarrow \mathbb{N} \cup\{\infty\}$ is the $2$-adic valuation.  Nevertheless, in the following  theorem, we show that, in any case, such curves  have zero $p$-rank.

\begin{theorem}\label{+formulas-thm4}
	Let $n, m \in \Za_{>0}$ be divisors of $p^{u}+1$ and $p^v+1$, respectively, where $u, v \in \Za_{\geq 0}$. Then $\mathcal{F}: y^{m} = x^{n} + 1$ has zero $p$-rank.
\end{theorem}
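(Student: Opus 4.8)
The plan is to reduce the general statement to the ``extremal'' curve $\mathcal{F}_{p^v+1,\,p^u+1}$ and then to show, via Theorem \ref{cor-p-rank-Cnm}, that the associated set $T$ consists only of trivial pairs. For the reduction, observe that $(x,y)\mapsto\bigl(x^{(p^u+1)/n},\,y^{(p^v+1)/m}\bigr)$ is a nonconstant morphism $\mathcal{F}_{p^v+1,\,p^u+1}\to\mathcal{F}_{m,n}$: with $X=x^{(p^u+1)/n}$ and $Y=y^{(p^v+1)/m}$ one has $Y^m=y^{p^v+1}=x^{p^u+1}+1=X^n+1$. Hence, by Lemma \ref{p-rank-morphism}(ii), it is enough to prove $\gamma(\mathcal{F}_{p^v+1,\,p^u+1})=0$; the degenerate cases ($u=0$, $v=0$, or a rational curve) are immediate or reduce to $u=1$ (resp.\ $v=1$) since $2\mid p+1$ for odd $p$, so I will assume $u,v\ge 1$. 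Applying Theorem \ref{cor-p-rank-Cnm} with $w=\gcd(u,v)$, $h=2uv/w$, $\alpha=(p^h-1)/(p^u+1)$ and $\beta=(p^h-1)/(p^v+1)$, and recalling that the only elements of $T$ outside the range $1\le i\le p^u$, $1\le j\le p^v$ are the $m+n+\gcd(m,n)$ trivial pairs (those with $i=0$, $j=m$, or $im=jn$), it suffices to show that $\binom{j\beta}{i\alpha}\equiv 0\pmod p$ whenever $1\le i\le p^u$, $1\le j\le p^v$ and $im<jn$.

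Next I would make the base-$p$ expansions of $i\alpha$ and $j\beta$ explicit. From $p^h-1=(p^{2u}-1)\sum_{c=0}^{v/w-1}p^{2uc}$ and $(p^{2u}-1)/(p^u+1)=p^u-1$ we get $\alpha=(p^u-1)\sum_{c=0}^{v/w-1}p^{2uc}$, and the identity $i(p^u-1)=(i-1)p^u+(p^u-i)$, valid for $1\le i\le p^u$, shows that the expansion of $i\alpha$ is $2u$-periodic within the $h$ available digit positions: in each block of length $2u$ the lower $u$ digits are those of $p^u-i$ and the upper $u$ digits are those of $i-1$. Writing $i-1=\sum_{s=0}^{u-1}e_sp^s$, the $k$-th digit of $i\alpha$ equals $f(k\bmod 2u)$, where $f(s)=p-1-e_s$ for $0\le s<u$ and $f(s)=e_{s-u}$ for $u\le s<2u$; note $f(s)+f(s+u)=p-1$. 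Symmetrically, the $k$-th digit of $j\beta$ is $g(k\bmod 2v)$ with $g(r)+g(r+v)=p-1$. Since $k\mapsto(k\bmod 2u,\,k\bmod 2v)$ is a bijection from $\{0,\dots,h-1\}$ onto the set of pairs $(s,r)$ with $0\le s<2u$, $0\le r<2v$ and $s\equiv r\pmod{2w}$, Lucas' Lemma (equivalently Lemma \ref{13102022-1} applied with $2u,2v$ in place of $u,v$) yields: $\binom{j\beta}{i\alpha}\not\equiv 0\pmod p$ if and only if $f(s)\le g(r)$ for every such pair $(s,r)$.

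The heart of the argument is then a short counting trick requiring no case split. Assume $\binom{j\beta}{i\alpha}\not\equiv 0$ and sum the inequalities $f(s)\le g(r)$ over all valid pairs. Each value $s$ appears in exactly $v/w$ pairs and each value $r$ in exactly $u/w$ pairs, so, using $f(s)+f(s+u)=p-1$ and $g(r)+g(r+v)=p-1$ to compute the digit sums, the total of the left-hand sides is $(v/w)\sum_{s=0}^{2u-1}f(s)=(v/w)\,u(p-1)$ and the total of the right-hand sides is $(u/w)\sum_{r=0}^{2v-1}g(r)=(u/w)\,v(p-1)$. These two totals coincide, so every single inequality $f(s)\le g(r)$ must be an equality. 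Hence $i\alpha$ and $j\beta$ have identical base-$p$ digits, i.e.\ $i\alpha=j\beta$, which forces $im=jn$ and contradicts $im<jn$. Therefore $T$ reduces to the $m+n+\gcd(m,n)$ trivial pairs, $\gamma(\mathcal{F}_{p^v+1,\,p^u+1})=0$, and the theorem follows.

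I expect the main obstacle to lie in step two: proving rigorously that $i\alpha$ (and $j\beta$) has the claimed periodic digit pattern — that the two blocks $p^u-i$ and $i-1$ never overlap or produce carries, that the description is still correct at the endpoints $i=p^u$ and $j=p^v$, and that the bijection between $\{0,\dots,h-1\}$ and the residue pairs is precisely the one pairing Lucas' Lemma with the two periodicities. Once this combinatorial dictionary is set up, the summation step is the clean, case-free core of the proof; in particular it handles uniformly both the supersingular range $\nu_2(u)=\nu_2(v)$ and the non-supersingular range $\nu_2(u)\ne\nu_2(v)$, where no embedding into a supersingular Hermitian curve is available.
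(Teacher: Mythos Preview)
Your proof is correct and shares the paper's overall architecture: the same covering morphism $\mathcal{F}_{p^v+1,p^u+1}\to\mathcal{F}_{m,n}$, the same appeal to Lemma~\ref{p-rank-morphism}(ii), and the same explicit description of the base-$p$ digits of $i\alpha$ and $j\beta$ as $2u$- and $2v$-periodic strings with the ``complement'' relation $f(s)+f(s+u)=p-1$. The point of departure is the endgame. The paper fixes $s$ and pairs the two digits $e_s=p-1-b_s$ and $e_{s+u}=b_s$; from Lemma~\ref{13102022-1} it extracts
\[
\max\{b_s,\,p-1-b_s\}\ \le\ \min\{a_{\theta d+w},\,p-1-a_{\theta d+w}\},
\]
which forces every digit to equal $(p-1)/2$ and pins down the unique candidate $i=(p^u+1)/2$, $j=(p^v+1)/2$, then checks it violates $im<jn$. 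Your argument instead sums all the Lucas inequalities $f(s)\le g(r)$ over the CRT-parametrised index pairs and observes that the two totals coincide (both equal $uv(p-1)/w$), so every inequality is an equality and $i\alpha=j\beta$.

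Each route has its merits. The paper's min/max comparison is local and pinpoints the obstruction explicitly; as a by-product it shows directly that $p=2$ admits no candidate at all. Your averaging trick is global, completely uniform in the $2$-adic valuations of $u$ and $v$, and bypasses both the identification of the special value $(p-1)/2$ and any parity discussion of $u/d$. The ``obstacle'' you flag in step two is not really one: the identity $i(p^u-1)=(i-1)p^u+(p^u-i)$ already gives a carry-free decomposition for all $1\le i\le p^u$ (including the endpoint $i=p^u$, where $i-1=p^u-1$ has all digits $p-1$ and $p^u-i=0$), and the bijection $k\mapsto(k\bmod 2u,\,k\bmod 2v)$ onto $\{(s,r):s\equiv r\pmod{2w}\}$ is exactly the CRT isomorphism for $\mathrm{lcm}(2u,2v)=2uv/w=h$, which is precisely how Lemma~\ref{13102022-1} is applied (with $2u,2v,2w$ in place of its $u,v,w$).
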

\begin{proof}
	Considering the map $\mathcal{F}_{p^v+1,p^u+1}\longrightarrow \mathcal{F}$, $(x,y) \mapsto (x^{(p^u+1)/n},y^{(p^v+1)/m})$, and item  (ii) of Lemma \ref{p-rank-morphism}, it suffices to prove that $\gamma(\mathcal{F}_{p^v+1,p^u+1})=0$.
	From \eqref{eq1-13042021}, we have  $\gamma(\mathcal{F}_{p^v+1,p^u+1})=\#A$, where %$A$ is given by
	\begin{equation*} A = \left\{ (i,j) \in \Na^2 \mid  p^v+1 \leq i(p^v+1) < j(p^u+1) \leq (p^u+1)p^v \; \mbox{and}\; \dbinom{j\beta}{i\alpha} \not\equiv 0 \pmod{p} \right\},\end{equation*}
	with $d=\gcd(v,u)$, $q=p^{2vu/d}$, $\alpha = (q-1)/(p^u+1)$, and $\beta = (q-1)/(p^v+1)$. Suppose $A\neq \emptyset$, and let $(i,j) \in A$. Since $j \in \llbracket1, p^v\rrbracket$ and $i \in \llbracket1, p^u\rrbracket$, we may consider the following base-$p$ expansions
	\begin{equation}\label{base-p-1}
	j-1 = a_{v-1}p^{v-1} + \cdots + a_{1}p + a_{0}\; \mbox{and}\; i-1 = b_{u-1}p^{u-1} + \cdots + b_{1}p + b_{0}.
	\end{equation}
	Observe that
	\begin{equation*}j\beta = \displaystyle\sum_{a=0}^{\frac{u}{d}-1}\displaystyle\sum_{r=0}^{2v-1} c_rp^{2va+r} \;\mbox{and}\; i\alpha = \displaystyle\sum_{b=0}^{\frac{v}{d}-1}\displaystyle\sum_{s=0}^{2u-1} e_s p^{2ub+s},\end{equation*}
	where
	\begin{equation*}c_r = \left\{
	\begin{array}{ll}
	p-1-a_r & \mbox{if\;}  r \in \llbracket 0, v-1\rrbracket\\
	a_{r-v} & \mbox{if\;}  r \in \llbracket v, 2v-1\rrbracket
	\end{array}
	\right. \mbox{\;and\;}
	e_s = \left\{
	\begin{array}{ll}
	p-1-b_s & \mbox{if\;}  s \in \llbracket 0, u-1\rrbracket\\
	b_{s-u} & \mbox{if\;}  s \in \llbracket u, 2u-1\rrbracket.
	\end{array}
	\right.
	\end{equation*}
	Let $(s,w) \in \llbracket0, u-1\rrbracket \times \llbracket0, d-1\rrbracket$, with  $s \equiv w \pmod{d}$. By Lemma \ref{13102022-1}, $\binom{j\beta}{i\alpha} \not\equiv 0 \pmod{p}$ implies
	\begin{eqnarray*}
		0 \leq e_s, e_{u+s} & \leq & \displaystyle\min\{c_{\theta d+w}\mid \theta \in \llbracket0, 2v/d-1\rrbracket\}\\ &=& \displaystyle\min\{a_{\theta d+w}, p-1-a_{\theta d+w} \mid  \theta \in \llbracket0, v/d-1\rrbracket\}.
	\end{eqnarray*}
	In particular, 
	\begin{equation}\label{meet}
	\max\{b_s,p-1-b_s\} \leq \min\{a_{\theta d+w}, p-1-a_{\theta d+w}  \mid \theta \in \llbracket0, v/d-1\rrbracket\}.
	\end{equation}
	It is easy to see that \eqref{meet} implies $p>2$  and  $b_s=a_{\theta d+w} = (p-1)/2$. Hence from \eqref{base-p-1}, we have $i=(p^u+1)/2$ and    $j=(p^v+1)/2$, which contradicts the condition $i(p^v+1) < j(p^u+1)$ in the definition of the set $A$.
	Therefore, $\gamma(\mathcal{F}_{p^v+1,p^u+1}) = \#A = 0$.
\end{proof}

\begin{theorem}\label{+formulas-thm2}\label{+formulas-thm3}
	Let $m, n \in \Za_{>0}$, $d=\gcd(n,m)$, and  $\gamma$ be the $p$-rank of 
	\begin{equation*}\mathcal{F}: y^{p^m+1} = x^{p^n-1} + 1.\end{equation*}
	\begin{enumerate}[\rm (i)]
		\item If $n/d$ is even, then 
		%{\footnotesize
		\begin{equation*}\gamma =\Big(\dsum_{i=0}^{p-1} \big((p-i) (i+1)\big)^{\frac{n}{2d}} + \dsum_{1 \leq j \leq i \leq p-1} \big((j+1)^{\frac{m}{d}}-2j^{\frac{m}{d}}+(j-1)^{\frac{m}{d}}\big)\big((p-i) (i-j+1)\big)^{\frac{n}{2d}}  \Big)^d - (p^{m}+p^d).\end{equation*}%}
		%$$\gamma =\Big(\dsum_{i=0}^{p-1} (p-i)^{\frac{n}{2d}} \Big( (i+1)^{\frac{n}{2d}} + \dsum_{j=1}^{i} \big((j+1)^{\frac{m}{d}}-2j^{\frac{m}{d}}+(j-1)^{\frac{m}{d}}\big)(i-j+1)^{\frac{n}{2d}} \Big) \Big)^d - (p^{m}+p^d).$$
		\item If $n/d$ is odd, then 
		\begin{equation*}\gamma = \dfrac{1}{2^n}\Big((p+1)^{\frac{n}{d}} + \dsum_{i=1}^{\frac{p-1}{2}} \big((2i+1)^{\frac{m}{d}}-(2i-1)^{\frac{m}{d}}\big)(p-2i+1)^{\frac{n}{d}}\Big)^d - (p^m+1),\end{equation*}
		when $p\neq 2$, and $\gamma=0$ otherwise.
	\end{enumerate}	
\end{theorem}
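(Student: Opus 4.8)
The plan is to argue as in the proofs of Theorems \ref{+formulas-thm1} and \ref{+formulas-thm4}, combining Theorem \ref{cor-p-rank-Cnm} (equivalently \eqref{eq1-13042021}) with Lucas' Lemma through Lemma \ref{13102022-1}. First I would record that $p^n-1\mid p^h-1\iff n\mid h$ while $p^m+1\mid p^h-1\iff 2m\mid h$, so the smallest admissible $q=p^h$ is $p^{mn/d}$ when $n/d$ is even (in which case $m/d$ is necessarily odd, since $\gcd(m/d,n/d)=1$) and $p^{2mn/d}$ when $n/d$ is odd; this dichotomy is the source of the two cases. Writing $i=\sum_s b_sp^s$ and $j-1=\sum_r a_rp^r$, one gets $i\alpha=\sum_b\sum_s b_sp^{nb+s}$ exactly as in Theorem \ref{+formulas-thm1}, whereas the extra $+1$ in $p^m+1$ forces $j\beta$ to have period $2m$ with digit block $(p-1-a_0,\ldots,p-1-a_{m-1},a_0,\ldots,a_{m-1})$ (from $j(p^m-1)=(j-1)p^m+(p^m-j)$). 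This interleaving of the digits $a_r$ with their complements $p-1-a_r$ is the only genuinely new feature compared with $y^{p^m-1}=x^{p^n-1}+1$.

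Next I would feed these expansions into Lemma \ref{13102022-1}: the condition $\binom{j\beta}{i\alpha}\not\equiv 0\pmod p$ becomes a system bounding each digit $b_s$ by a minimum of prescribed digits of $j\beta$, and the count of admissible digit tuples factors as a $d$-th power over the residue classes modulo $d$. When $n/d$ is odd, the class modulo $d$ of an index $s$ meets, among the digits of $j\beta$, both $p-1-a_r$ and $a_r$ for the same $m/d$ indices $r$, so the bound collapses to $b_s\le\mu_t:=\min_k\min\{a_k,p-1-a_k\}$; hence each of the $d$ classes contributes $\sum_{\vec a\in\llbracket0,p-1\rrbracket^{m/d}}(\mu(\vec a)+1)^{n/d}$, and sorting the $\vec a$ by the value of $\mu$ and reindexing yields the closed form in part (ii). For $p=2$, $\min\{a,1-a\}=0$, so every $b_s=0$, i.e.\ $i=0$, which is incompatible with the strict inequality $im'<jn'$ in the definition of the set of \eqref{eq1-13042021}; thus $\gamma=0$, exactly as in Theorem \ref{+formulas-thm4}. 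When $n/d$ is even, the classes modulo $2d$ pair up as $\{t,t+d\}$ (because $m\equiv d\pmod{2d}$), the $n/(2d)$ digits $b_s$ in each half being bounded by one of two mixed minima $M_1,M_2$ built from the $m/d$ digits $a_r$ lying in class $t$ modulo $d$; each such pair then contributes $\Sigma:=\sum_{\vec a}(M_1(\vec a)+1)^{n/2d}(M_2(\vec a)+1)^{n/2d}$, and the full count is $\Sigma^d$.

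To turn $\Sigma$ into the displayed bracket of part (i) I would reorganize by the maxima $\mu_1,\mu_2$ of the two blocks of $b$-digits: the constraints become $\mu_1+\mu_2\le p-1$ together with $\mu_2\le a_r\le p-1-\mu_1$ or $\mu_1\le a_r\le p-1-\mu_2$ according to the parity of the position of $a_r$, so each of the $m/d$ digits $a_r$ has $p-\mu_1-\mu_2$ admissible values and $\Sigma=\sum_{\mu_1+\mu_2\le p-1}((\mu_1+1)^{n/2d}-\mu_1^{n/2d})((\mu_2+1)^{n/2d}-\mu_2^{n/2d})(p-\mu_1-\mu_2)^{m/d}$. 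Writing $G_N:=\sum_{s=1}^{N-1}s^{n/2d}(N-s)^{n/2d}$, the inner convolution in $\mu_1$ is the second difference $G_{N+2}-2G_{N+1}+G_N$, and two applications of discrete summation by parts against $(p-N)^{m/d}$ produce the weight $(j+1)^{m/d}-2j^{m/d}+(j-1)^{m/d}$ with the factor $((p-i)(i-j+1))^{n/2d}$, the sum $\sum_i((p-i)(i+1))^{n/2d}$ being the boundary term. This is the step I would isolate as an auxiliary counting lemma, in the spirit of Lemma \ref{+formulas-lema1}. Finally, the subtracted constants come from comparing the digit-tuple set with the set $A$ of \eqref{eq1-13042021}: the $p^m$ tuples with $i=0$ (one per choice of the $a_r$), together with the diagonal pairs $im'=jn'$ with $i\ge1$ (which satisfy Lucas' condition automatically, since then $\binom{j\beta}{i\alpha}=1$, and number $\gcd(p^m+1,p^n-1)-1$, i.e.\ $p^d$ when $n/d$ is even and $1$ when $n/d$ is odd by Lemma \ref{04112022-1}), account precisely for $p^m+p^d$, resp.\ $p^m+1$.

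The hard part will be the bookkeeping in case (i): tracking how the $2m$-periodic block of $j\beta$, the $n$-periodic block of $i\alpha$, and the parities of $n/d$ and $m/d$ distribute the digits among residue classes so as to identify $M_1,M_2$ correctly, then pushing the resulting double sum through the summation-by-parts manipulation into the exact form stated, and checking that the boundary and diagonal pairs are counted with neither omission nor repetition.
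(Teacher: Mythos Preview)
Your plan is correct and follows the paper's proof essentially step for step: set up $q$ via the dichotomy $D=\gcd(2m,n)\in\{d,2d\}$, write the $2m$-periodic digit block of $j\beta$ with the complemented first half, apply Lemma \ref{13102022-1}, factor as a $d$-th power over residue classes, and in case (ii) observe that the bound collapses to $\min_r\{a_r,p-1-a_r\}$ (with the $p=2$ vanishing argument exactly as you say); your bookkeeping for the subtracted constants via the set of \eqref{eq1-13042021} and Lemma \ref{04112022-1} is also correct and equivalent to the paper's use of $T$.

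The only substantive divergence is in case (i): to evaluate $\#L_t$ the paper does \emph{not} sort by the maxima $\mu_1,\mu_2$ of the two $b$-blocks and then push through summation by parts. Instead it sorts by the pair $(\max_r e_r,\min_r e_r)=(i,t)$ of the $a$-digits; the number of $m/d$-tuples with prescribed max $i$ and min $t$ is $(i-t+1)^{m/d}-2(i-t)^{m/d}+(i-t-1)^{m/d}$ (or $1$ when $i=t$), and each contributes $(t+1)^{n/2d}(p-i)^{n/2d}$ choices of $b$-digits, which after the substitution $j=i-t$ is exactly the bracket in part (i) (this is Lemma \ref{+formulas-lema3}). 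This gives the stated closed form in one stroke, whereas your route first produces $\sum_{\mu_1+\mu_2\le p-1}\big((\mu_1+1)^{n/2d}-\mu_1^{n/2d}\big)\big((\mu_2+1)^{n/2d}-\mu_2^{n/2d}\big)(p-\mu_1-\mu_2)^{m/d}$ and then needs an Abel-summation rearrangement to reach the same expression. Both are valid; the paper's is shorter.
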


\begin{proof}
	%First suppose  $n\geq m$. 
 Let $D=\gcd(2m,n)$,  $q = p^{2mn/D}$,  $\alpha = (q-1)/(p^n-1)$, and $\beta = (q-1)/(p^m+1)$. By Theorem \ref{cor-p-rank-Cnm}, $\gamma=\#T-(p^m+p^n+\gcd(p^m+1,p^n-1))$,
	where 
	\begin{equation*}
	T = \left\{ (i,j) \in \Na^2 \mid  0 \leq i\alpha \leq j\beta \leq q-1 \; \mbox{and}\; \dbinom{j\beta}{i\alpha} \not\equiv 0 \pmod{p} \right\}.
	\end{equation*}
	Since $T \subseteq \llbracket0, p^n-1\rrbracket \times \llbracket0, p^m+1\rrbracket$ and $\#\{ (i,j) \in T \mid j \in \{0, p^m+1\}\} = p^n+1$, we will focus on the pairs $(i,j) \in T$ with $j \in \llbracket1, p^m\rrbracket$.
	Considering the base-$p$ expansions 
	\begin{equation*}j-1 = a_{m-1}p^{m-1} + \cdots + a_{1}p + a_{0}  \quad \text{    and   } \quad  i = b_{n-1}p^{n-1} + \cdots + b_{1}p + b_{0},\end{equation*}
	observe that
	\begin{equation}\label{casen>m}
	j\beta = \displaystyle\sum_{a=0}^{\frac{n}{D}-1}\displaystyle\sum_{r=0}^{2m-1} c_rp^{2ma+r}\; \mbox{and}\; i\alpha = \displaystyle\sum_{b=0}^{\frac{2m}{D}-1}\displaystyle\sum_{s=0}^{n-1} b_s p^{nb+s},\end{equation}
	where 
	\begin{equation}\label{17102022-1}
	c_r = \left\{
	\begin{array}{ll}
	p-1-a_r & \mbox{if\;}  r \in \llbracket 0, m-1\rrbracket\\
	a_{r-m} & \mbox{if\;}  r \in \llbracket m, 2m-1\rrbracket.
	\end{array}
	\right.
	\end{equation}
	By Lemma \ref{13102022-1}, $\binom{j\beta}{i\alpha} \not\equiv 0 \pmod{p}$ if and only if for every $s \in  \llbracket0, n-1\rrbracket$, we have 
	\begin{equation*}0 \leq b_s \leq \mymin A_t,\end{equation*} where $0\leq t \leq D-1$,  $t \equiv s \pmod{D}$, and 
	\begin{equation}\label{At-geral}
	A_t = \{c_{D\theta +t} \mid  \theta \in \llbracket0, 2m/D-1\rrbracket\}.
	\end{equation}
	Now  consider  the following.
	
	\begin{itemize}
		\item Case $n/d$ even.
		We have $D=2d$, and then \eqref{At-geral}  reads $A_t = \{c_{2d\theta +t} \mid \theta \in \llbracket0, m/d-1\rrbracket\}$.	
		Clearly  for any $u\in \mathbb{N}$, there exists a unique  $t \in \llbracket0, d-1\rrbracket$ such that  either $u\equiv t \pmod{2d}$ or  $u\equiv t +d \pmod{2d}$. In both cases, we have  $u\equiv t  \pmod{d}$,  and the latter case
		occurs if and only if  $\lfloor  u/d    \rfloor$ is odd. That is,  taking  into account the parity of $\lfloor  u/d    \rfloor$, we may replace reduction modulo $2d$ by  reduction modulo $d$.
		This observation underlies the proof of this case.
		
		Note that for  $t \in  \llbracket0, d-1\rrbracket$,  considering the sets 
		\begin{equation*}
		A_t \cup A_{d+t}=\left\{c_{\theta d+t}     \mid  \theta \in \llbracket 0, 2m / d-1 \rrbracket\right\}
		\end{equation*}
		and the values
		$s \in  \llbracket0, n-1\rrbracket$, with $s \equiv t \pmod{d}$, the condition  $\binom{j\beta}{i\alpha} \not\equiv 0 \pmod{p}$ is equivalent to
		\begin{equation*}
		0\leq b_s \leq \left\{\begin{array}{ll}
		\mymin A_t& \text { if }  \lfloor  s/d    \rfloor \text { is even} \\
		\mymin A_{d+t}& \text { if }  \lfloor  s/d    \rfloor \text { is odd}. \\
		\end{array} \right.
		\end{equation*}
		Therefore,  Lemma 4.4 gives $\#T = \#L+p^n+1$, where 
		%{\footnotesize
		\begin{equation*}%\label{L-main}
		L=\left\{ \begin{array}{l|l}\left(a_0, \ldots, a_{m-1}, b_0, \ldots, b_{n-1}\right) &  \begin{array}{l}a_r  \in  \llbracket 0, p-1 \rrbracket, \\ 0 \leq b_s \leq \mymin  A_t \text { if }  \lfloor  s/d    \rfloor \text { is even},  \\ 0 \leq b_s \leq \mymin  A_{t+d} \text { if } \lfloor  s/d    \rfloor \text { is odd}, \\
		t \in \llbracket 0, d-1 \rrbracket, \text { and } s\equiv t \pmod{d} \end{array}\end{array}\right\}.
		\end{equation*}%}
		The two sets $A_t$ and $A_{d+t}$ are given in terms of the same elements $a_{\theta d+t}$,  although in different ways.  Indeed
		for $c_{d \theta+t} \in A_t \cup A_{t+d}$, equation \eqref{17102022-1} yields
		\begin{equation}\label{aux-1}
		c_{d \theta+t}=\left\{\begin{array}{ll}
		p-1-a_{d \theta+t} & \text { if } \theta\leq m/d-1\\
		a_{d(\theta-m/d)+t} & \text { if } \theta \geq m/d.
		\end{array} \right.
		\end{equation}
		Since $m/d$ is odd, collecting the elements  $c_{d \theta+t}$  in \eqref{aux-1} according to the parity of $\theta$ gives
		%{\footnotesize
		\begin{equation*}A_t=\left\{p-1-a_{d \theta_0+t}, \, a_{d \theta_1+t} \middle\vert \begin{array}{l}\theta_i \in \llbracket 0, m / d-1 \rrbracket, \\ \theta_i\equiv i \pmod{2} \end{array}\right\}=\left\{p-1-e_{\theta d+t} \mid \theta \in \llbracket 0, m / d-1 \rrbracket \right\}\end{equation*}%}
		and 
		%{\footnotesize
		\begin{equation*}A_{d+t}=\left\{p-1-a_{d \theta_0+t}, \, a_{d \theta_1+t} \middle\vert \begin{array}{l}\theta_i \in \llbracket 0, m / d-1 \rrbracket, \\ \theta_i\equiv i \pmod{2} \end{array}\right\}  = \left\{e_{\theta d+t}  \mid  \theta \in \llbracket 0, m / d-1 \rrbracket\right\},\end{equation*}%}
		where 
		$e_{\theta d+t}:=\left\{\begin{array}{ll}
		a_{d\theta+t} & \text { if } \theta  \text { is even }\\
		p-1-a_{d \theta+t} & \text { if } \theta  \text { is odd }
		\end{array} \right.,
		$
		with  $\theta \in \llbracket 0, m / d-1 \rrbracket$.

		Now for  a fixed $t \in \llbracket 0, d-1 \rrbracket $ and $u = (e_0, \ldots, e_{m-1}, b_0, \ldots, b_{n-1}) \in L$, we consider  the set of  indices $\{{r_1}, \ldots, {r_{m/d}}, {s_1}, \ldots, {s_{n/d}}\}$, whose corresponding entries in $u$ are such that 
		\begin{equation*}
		0\leq b_{s_\lambda} \leq \left\{\begin{array}{ll}
		\mymin  \{e_{r_\mu}:\, \mu \in \llbracket 1, m/d\rrbracket\} & \text { if }   \lambda \in \llbracket 1, n/(2d)\rrbracket\\
		\mymin \{p-1-e_{r_\mu}: \mu \in \llbracket 1, m/d\rrbracket\} & \text { if }   \lambda \in \llbracket n/(2d)+1, n/d\rrbracket
		\end{array} \right.,
		\end{equation*}
		where  $r_\mu \equiv s_\lambda \equiv t  \pmod{d}$ for $(\mu,\lambda) \in \llbracket 1, m/d\rrbracket \times \llbracket 1, m/d\rrbracket$, and $\lfloor  s_\lambda/d    \rfloor$  is odd  for  $\lambda \leq n/2d$, and even otherwise. Thus $L$ can be seen as the cartesian product of the  $d$ sets 
		%{\footnotesize
		\begin{eqnarray}\label{Lt-2} 
		L_t = \left\{(e_{r_1}, \ldots, e_{r_{m/d}}, b_{s_1}, \ldots, b_{s_{n/d}}) \middle\vert  \begin{array}{l}
		e_{r_\mu} \in \llbracket 0, p-1\rrbracket,\\ 
		0 \leq  b_{s_\lambda} \leq \displaystyle\min\{e_{r_\mu} \mid \mu \in \llbracket1, m/d\rrbracket\},\; \lambda \leq \frac{n}{2d},\\ 
		0 \leq b_{s_\lambda} \leq \displaystyle\min\{p-1-e_{r_\mu} \mid \mu \in \llbracket1, m/d\rrbracket\},\; \lambda > \frac{n}{2d},\\
		s_\lambda \equiv r_\mu \equiv t \pmod{d}
		\end{array}  \right\}.
		\end{eqnarray}%}
		
		\item Case $n/d$ odd.  Now we have $D=d$, which simplifies  the analysis.   Indeed equations 
		\eqref{17102022-1} and \eqref{At-geral} immediately yield 
		\begin{equation*}A_t = \{a_{\theta d+t},\,  p-1-a_{\theta d+t}  \mid  \theta \in \llbracket0, m/d-1\rrbracket\},\end{equation*}
		and then  $\#T = \#L+p^n+1$, where 
		%{\footnotesize
		\begin{eqnarray*}
			L= \left\{(a_0, \ldots, a_{m-1}, b_0, \ldots, b_{n-1}) \middle\vert  \begin{array}{l}
				a_{r} \in \llbracket 0, p-1\rrbracket,\\ 
				0 \leq  b_{s} \leq \mymin  \{a_{\theta d+t},\, p-1-a_{\theta d+t} \mid \theta \in \llbracket0, m/d-1\rrbracket\},\\ 
				t \in \llbracket 0, d-1 \rrbracket,\text { and } s\equiv r \equiv t \pmod{d}
			\end{array}\right\}.
		\end{eqnarray*}%}
		As in the first case, 	 $L$ can be seen as the cartesian product of the  $d$ sets 
		%{\tiny
		\begin{eqnarray}\label{Lt-1}
		L_t = \left\{(a_{r_1}, \ldots, a_{r_{m/d}}, b_{s_1}, \ldots, b_{s_{n/d}}) \middle\vert  \begin{array}{l}
		a_{r_i} \in \llbracket 0, p-1\rrbracket, \\
		0 \leq b_{s_j} \leq \displaystyle\min\{a_{r_i}, p-1-a_{r_i}\mid  i \in \llbracket1, m/d\rrbracket\}\\
		s\equiv r \equiv t \pmod{d}
		\end{array}
		\right\}.
		\end{eqnarray}%}
	\end{itemize}
	Independent of $t$, the cardinalities of the sets $L_t$ in \eqref{Lt-2} and	  \eqref{Lt-1} are given by Lemmas \ref{+formulas-lema3} and \ref{+formulas-lema2.1}, respectively.
	Using  Lemma \ref{04112022-1} and $\#T = (\#L_t)^d+p^n+1$ for each of the two cases,  the result follows.
	
	%	Now suppose that $n < m$. Clearly $\mathcal{F}$ is isomorphic to $y^{p^n-1}=x^{p^m+1}+1$, and as in previous case, the $p$-rank of $\mathcal{F}$ is  given by $\gamma=\#\overline{T}-(p^m+p^n+\gcd(p^m+1,p^n-1))$,
%	where 
%	\begin{equation*}
%	\overline{T} = \left\{ (j,i) \in \Na^2 \mid  0 \leq j\beta \leq i\alpha \leq q-1 \; \mbox{and}\; \dbinom{i\alpha}{j\beta} \not\equiv 0 \pmod{p} \right\}.
%	\end{equation*}
%	Recall that $i\alpha$  and  $j\beta$ can be written  as in \eqref{casen>m}. Thus
%	the last assertion in  Lemma \ref{13102022-1} implies that $\#\overline{T}$ is still  given
%	by the same expression, in terms of $n,m,$ and $p$, obtained for $\#T$. Therefore, the result follows.
%	

\end{proof}

\begin{corollary}\label{16112022-1}
	Let $m, n \in \Za_{>0}$ be divisors of $2^{u}+1$ and $2^{v}-1$, respectively. If $v / \gcd(u,v)$ is odd, then $\mathcal{F}: y^{m} = x^{n} + 1$ has zero $2$-rank.
\end{corollary}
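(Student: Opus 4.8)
The plan is to realize $\mathcal{F}$ as a cover of a curve to which Theorem~\ref{+formulas-thm2} applies, exactly as in Corollary~\ref{+formulas-thm1-cor1} and in the proof of Theorem~\ref{+formulas-thm4}. Since $p=2$, the integers $2^{u}+1$ and $2^{v}-1$ are odd, hence coprime to $p$, so $\mathcal{F}_{2^{u}+1,\,2^{v}-1}\colon y^{2^{u}+1}=x^{2^{v}-1}+1$ is of the type treated in Section~\ref{sec2} (if $v=1$ this curve is rational and there is nothing to prove, so assume $v\ge 2$). Because $m\mid 2^{u}+1$ and $n\mid 2^{v}-1$, the assignment
\begin{equation*}
\mathcal{F}_{2^{u}+1,\,2^{v}-1}\longrightarrow \mathcal{F},\qquad (x,y)\longmapsto \bigl(x^{(2^{v}-1)/n},\,y^{(2^{u}+1)/m}\bigr)
\end{equation*}
is a well-defined nonconstant morphism: writing $X=x^{(2^{v}-1)/n}$ and $Y=y^{(2^{u}+1)/m}$, the relation $y^{2^{u}+1}=x^{2^{v}-1}+1$ becomes $Y^{m}=X^{n}+1$. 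By item~(ii) of Lemma~\ref{p-rank-morphism} it thus suffices to prove $\gamma(\mathcal{F}_{2^{u}+1,\,2^{v}-1})=0$.

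Next I would apply Theorem~\ref{+formulas-thm2} to the curve $y^{p^{m}+1}=x^{p^{n}-1}+1$ with $p=2$ and with the roles of $m,n$ in that statement played by $u,v$. Then $d=\gcd(u,v)$ and the quantity ``$n/d$'' of the theorem equals $v/\gcd(u,v)$, which is odd by hypothesis, placing us in case~(ii); since $p=2$, that case gives $\gamma(\mathcal{F}_{2^{u}+1,\,2^{v}-1})=0$ directly, finishing the proof.

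For concreteness, the vanishing can also be seen by unwinding the odd case of the proof of Theorem~\ref{+formulas-thm2}: for $p=2$ each constraint $0\le b_{s_j}\le \min\{a_{r_i},\,p-1-a_{r_i}\}$ collapses, since $\min\{a,1-a\}=0$ for all $a\in\{0,1\}$, so every $b$-coefficient is forced to $0$, the set $L_t$ has $2^{u/d}$ elements, and $\#T=(\#L_t)^{d}+2^{v}+1=2^{u}+2^{v}+1$; combined with $\gcd(2^{u}+1,2^{v}-1)=1$ (Lemma~\ref{04112022-1}, using $p=2$ and $v/\gcd(u,v)$ odd), Theorem~\ref{cor-p-rank-Cnm} yields $\gamma=\#T-(2^{u}+2^{v}+1)=0$. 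Accordingly, there is no real obstacle here: all the substantial combinatorics already lives in Theorem~\ref{+formulas-thm2}, and the only points to verify are that the displayed cover is legitimate and that the parity hypothesis matches case~(ii).
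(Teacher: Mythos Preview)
Your proof is correct and follows essentially the same approach as the paper: reduce via the covering morphism and Lemma~\ref{p-rank-morphism}(ii) to the case of $\mathcal{F}_{2^{u}+1,\,2^{v}-1}$, and then invoke case~(ii) of Theorem~\ref{+formulas-thm2} with $p=2$. The paper's proof is just the terse version of your first two paragraphs; your extra unwinding of $\#T$ is not needed but is a nice sanity check.
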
 
\begin{proof}
	As in the proof of Corollary \ref{+formulas-thm1-cor1}, it suffices to show that $\gamma(\mathcal{F}_{2^{u}+1,2^{v}-1})=0$, which  this follows from (ii) in Theorem \ref{+formulas-thm2}. 
\end{proof}

\section{Formulas for the p-rank of  $y^2=x^n+1$}\label{sec4}

Throughout this section, $p$ is an odd prime, $n$ is a positive integer coprime to $p$, and $\gamma(\Cc_n)$ is the $p$-rank of the curve $\Cc_n: y^2 = x^n + 1$ defined over $\Fa_q$, where  $q=p^h$ and  $h$ is the order of $p$ in $(\Za/n\Za)^{\times}$.  Recall that Corollary \ref{p-rank-c1a} gives the $p$-rank of the curve $\Cc_n$ in terms of the cardinality of
\begin{equation*}S = \left\{i \in \llbracket0, \lfloor n / 2\rfloor\rrbracket \mid  \dbinom{\frac{q-1}{2}}{i \alpha} \not\equiv 0 \pmod{p} \right\},\end{equation*}
where $\alpha = (q-1)/n$. Since $\lfloor n / 2\rfloor\alpha$ is the largest multiple of $\alpha$ in  $\llbracket0,(q-1)/2\rrbracket$, Lucas' Lemma yields the following version of Corollary \ref{p-rank-c1a}.

\begin{remark}\label{lucas-help}
	If $M$ is the  number of values $u=u_1p^{h-1}+\cdots+u_{h-1}p+u_h$, with $u_i \in \llbracket0,(p-1)/2\rrbracket$, that are  divisible by $\alpha=(q-1)/n$, then 
	\begin{equation}\label{p-adic count}
	\gamma(\Cc_n) = \left\{
	\begin{array}{ll}
	M - 1 & \mbox{\;if\;}  n  \mbox{\;is odd\;} \vspace{0.2cm} \\
	M - 2& \mbox{\;if\;}  n  \mbox{\;is even\;}.
	\end{array}
	\right.
	\end{equation}
	
\end{remark}

The following provides some  consequences of the results in the previous section.
\begin{corollary}\label{corollary-p-rank-c1a}
	If $r \in \Za_{>0}$, then
	\begin{enumerate}[\rm(i)]
		\item $\gamma(\Cc_{p^r-1}) = \big( (p+1)/2\big)^r-2$
		\item $\gamma(\Cc_{2(p^r+1)}) = \big( (p+1)/2\big)^r$
		\item $\gamma(\Cc_{2(p^r-1)}) = \big( (p+1)/2\big)^r-2$  
		\item $\gamma(\Cc_{p^{2r}+p^r+1}) = \big( (p+3)(p+1)/8\big)^r$
		\item $\gamma(\Cc_{2(p^{2r}+p^r+1)}) = 2\big( (p+3)(p+1)/8\big)^r$
		\item  $\gamma(\Cc_{p^{3r}-p^{2r}+p^r-1})=\big( (p^2+2p+3)(p+1)/12\big)^r-2$.
	\end{enumerate}
\end{corollary}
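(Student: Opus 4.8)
The plan is to deduce all six identities from Remark~\ref{lucas-help}, which expresses $\gamma(\Cc_n)$ in terms of the number $M$ of integers $u=u_1p^{h-1}+\cdots+u_h$ with every digit $u_i\in\llbracket 0,(p-1)/2\rrbracket$ that are divisible by $\alpha=(p^h-1)/n$, where $h$ is the order of $p$ modulo $n$. For each $n$ in the list I would first determine $h$ and $\alpha$. In every case $n$ divides $p^{kr}-1$ for a small $k$ through an explicit factorization: $p^{2r}-1=(p^r-1)(p^r+1)$ handles (ii) and (iii), $p^{3r}-1=(p^r-1)(p^{2r}+p^r+1)$ handles (iv) and (v), and $p^{4r}-1=(p^r+1)(p^{3r}-p^{2r}+p^r-1)$ handles (vi); comparing $n$ with powers of $p$ shows the order of $p$ modulo $n$ is \emph{exactly} $kr$. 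This gives $h=kr$ and $\alpha\in\{1,(p^r\mp 1)/2,p^r\mp 1\}$, and then $\gamma(\Cc_n)=M-1$ if $n$ is odd and $M-2$ if $n$ is even. Part (i) is the degenerate case $\alpha=1$, $k=1$, so $M=((p+1)/2)^r$ at once (alternatively, (i) is immediate from Theorem~\ref{c-m,p^h-1} with $m=2$).

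The central device is to regroup the $h=kr$ base-$p$ digits of $u$ into $k$ blocks of length $r$, writing $u=\sum_{\ell=0}^{k-1}u_\ell p^{\ell r}$ with $u_\ell\in\llbracket 0,p^r-1\rrbracket$. Since $p^r\equiv 1\pmod{p^r-1}$ (hence modulo $(p^r-1)/2$) and $p^r\equiv -1\pmod{p^r+1}$ (hence modulo $(p^r+1)/2$), divisibility of $u$ by $\alpha$ becomes a linear congruence on the blocks: $\alpha\mid\sum_\ell u_\ell$ when $\alpha\mid p^r-1$, and $\alpha\mid\sum_\ell(-1)^\ell u_\ell$ when $\alpha\mid p^r+1$. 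The digit restriction forces each $u_\ell$ into $A:=\{v\in\llbracket 0,p^r-1\rrbracket : \text{all base-}p\text{ digits of }v\text{ are}\le(p-1)/2\}$, a set with $\#A=((p+1)/2)^r$ and $\max A=(p^r-1)/2$. Because the $u_\ell$ are this small, the block-sum (or alternating block-sum) lies in a narrow interval, so the congruence pins it down to a short list of values: $\{0,(p^r-1)/2\}$ for (ii); $u_0=u_1$ for (iii); $\{0,p^r-1\}$ for (iv); $\{0,(p^r-1)/2,p^r-1,3(p^r-1)/2\}$ for (v); and $u_0+u_2=u_1+u_3$ for (vi).

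For each admissible target value $S$ I would count the tuples of $A^k$ realizing it, and the point is that this count is an $r$-th power. Adding two digits each $\le(p-1)/2$ never carries, and adding three such digits carries only when the target digit would force it, which a one-line induction on the digit position (there is no incoming carry at position $0$) excludes for every $S$ on the above lists. Hence the base-$p$ addition is carry-free, so the number of tuples factors over the $r$ digit positions as $(\text{per-position count})^r$, and each per-position count is elementary: for (iv) and (v), $\#\{(a,b,c)\in\llbracket 0,(p-1)/2\rrbracket^3 : a+b+c=p-1\}=(p+1)(p+3)/8$ (and also $\#\{(a,b,c) : a+b+c=(p-1)/2\}=(p+1)(p+3)/8$, needed for the extra target in (v)); for (vi), $\sum_{k=0}^{p-1}\min(k+1,p-k)^2=(p+1)(p^2+2p+3)/12$; and for (ii) and (iii) the per-position counts are $(p+1)/2$ and $1$. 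Summing over the admissible $S$ yields $M$ in each case, and subtracting $1$ or $2$ as dictated by the parity of $n$ gives the six stated formulas. For (vi) one organizes the count as $M=\sum_{v}c_v^2$ with $c_v=\#\{(a,b)\in A^2 : a+b=v\}=\prod_i\min(v_i+1,p-v_i)$, which again factors over digit positions.

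The step I expect to be the main obstacle is the carry bookkeeping: one must verify, case by case, that precisely for the block-sum values permitted by the congruence the base-$p$ additions of the blocks produce no carries, since this is exactly what makes the tuple count multiplicative over the $r$ digit positions and hence an $r$-th power. A secondary, routine point is confirming that the order of $p$ modulo $n$ equals $kr$ and not a proper divisor, which follows from a size comparison of $n$ with the relevant powers of $p$.
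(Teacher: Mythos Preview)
Your approach is essentially the paper's: both use Remark~\ref{lucas-help}, group the $kr$ base-$p$ digits into $k$ blocks of length $r$, reduce via $p^r\equiv\pm 1\pmod\alpha$ to a short list of block-sum targets, and exploit the digit bound $\le(p-1)/2$ to get carry-free (hence multiplicative) per-position counts. The only substantive difference is part~(v): the paper derives it in one line from~(iv) via Corollary~\ref{cor-04022020}(iv), which gives $\gamma(\Cc_{2m})=2\gamma(\Cc_m)$ for odd $m$, whereas you redo the four-target computation directly; both work, but the paper's shortcut is cleaner.

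Two small slips to fix. First, your target list for~(ii) is incomplete: with $\alpha=(p^r-1)/2$ and two blocks the sum $u_0+u_1$ ranges over $\llbracket 0,p^r-1\rrbracket$, so the admissible values are $\{0,(p^r-1)/2,p^r-1\}$, not just $\{0,(p^r-1)/2\}$; the extra target contributes one tuple and the final count is unaffected. Second, you needn't verify that $kr$ is the exact order of $p$ modulo $n$: Corollary~\ref{p-rank-c1a} only requires $n\mid p^h-1$, so taking $h=kr$ suffices.
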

\begin{proof}
	Assertion (i) follows directly from \eqref{c-m,p^h-1 eq 2} in Theorem \ref{c-m,p^h-1}. Assertion 
	(v) follows   from (iv) together with $\gamma(\Cc_{2(p^{2r}+p^r+1)}) = 2\gamma(\Cc_{p^{2r}+p^r+1})$, given by 
	item (iv) in Corollary \ref{cor-04022020}.
	The proofs of  (ii) and (iii) are simpler versions of the proofs of (iv) and (vi), respectively. Thus  we present proofs for (iv) and (vi) only. 
	Regarding (iv), note that $q=p^{3r}$ and $\alpha=(p^{3r}-1)/(p^{2r}+p^r+1)=p^r-1$.
	From Remark \ref{lucas-help}, it suffices to count the values 
	\begin{equation}\label{u-values}
	u = \sum\limits_{i=1}^{r}a_ip^{3r-i}   +\sum\limits_{i=1}^{r}b_ip^{2r-i} +\sum\limits_{i=1}^{r}c_ip^{r-i}, 
	\end{equation}
	with $a_i , b_i,c_i \in \llbracket0,  (p-1)/2\rrbracket$,  that are divisible by $p^r-1$. Since
	\begin{equation*}u \equiv \sum\limits_{i=1}^{r}(a_i+b_i+c_i)p^{r-i}  \pmod{p^r-1},\end{equation*}
	it follows that  $p^r-1$ divides $u$ if and only if  it divides $u_0:=\sum\limits_{i=1}^{r}(a_i+b_i+c_i)p^{r-i}$.
	Note that the condition  $a_i , b_i,c_i \in  \llbracket0,  (p-1)/2\rrbracket$  implies   $u_0 \in  \llbracket0,3(p^r-1)/2\rrbracket$, and then
	$p^r-1$ divides $u_0$ if and only if  $u=0$ or $\sum\limits_{i=1}^{r}(a_i+b_i+c_i)p^{r-i}=p^r-1$.
	Thus we are left with the problem of  counting  $a_i , b_i,c_i \in  \llbracket0,  (p-1)/2\rrbracket$ for which  $\sum\limits_{i=1}^{r}(a_i+b_i+c_i)p^{r-i}=p^r-1$. The uniqueness of the base-$p$  expansion 
	implies that the former equality is equivalent to $a_i+b_i+c_i=p-1$,  given that  $a_i+b_i+c_i\leq 3(p-1)/2\leq 2(p-1)$. Since  $c_i = p-1 - (a_i+b_i)$, we only need to count    the values $a_i,b_i \in  \llbracket0,  (p-1)/2\rrbracket$ such that $a_i+b_i \geq (p-1)/2$. Therefore, the number of values in \eqref{u-values} is given by
	\begin{equation*}M=1+\Big( \sum_{a_i=0}^{(p-1)/2}((p+1)/2-a_i) \Big)^r =1+ \Big( \frac{(p+1)(p+3)}{8}\Big)^r,\end{equation*}
	and the result follows from \eqref{p-adic count} as  $n=p^{2r}+p^r+1$ is odd. The proof of (vi) is similar to that of (iv). Indeed,
	for item (vi) we have $q=p^{4r}$ and $\alpha=p^r+1$, and the same argument leads to count $a_i , b_i,c_i,d_i \in  \llbracket0,  (p-1)/2\rrbracket$ 
	for which
	\begin{equation*}u_0:=|\sum\limits_{i=1}^{r}(-a_i+b_i-c_i+d_i)p^{r-i}| \leq \sum\limits_{i=1}^{r}|-a_i+b_i-c_i+d_i|p^{r-i}    \leq \sum\limits_{i=1}^{r}(p-1)p^{r-i} =p^r-1\end{equation*}
	is divisible by $p^r+1$. Clearly, this condition is equivalent to $u_0=0$,  that is, $a_i+c_i=b_i+d_i$ for all $i\in \llbracket1,r\rrbracket$. Note that for each $k \in \llbracket0,  p-1\rrbracket$, the number of solutions
	$(x,y) \in  \llbracket0,  (p-1)/2\rrbracket^2$   to $x+y=k$ is  $k+1$ if $k\leq (p-1)/2$, and $p-k$ otherwise.
	Therefore, the number of values  of $u$,  in the notation of Remark \eqref{lucas-help}, is given by
	\begin{equation*}M=\Big( ((p+1)/2)^2 + 2\sum_{i=0}^{(p-1)/2}   i^2  \Big)^r = \Big( \frac{(p^2+2p+3)(p+1)}{12}\Big)^r,\end{equation*}
	and  since $n$ is even, the result follows.	
\end{proof}

In the remainder of this section, we incorporate an additional technique to provide a formula for the p-rank $	\gamma(\Cc_n)$ given in \eqref{p-adic count}. This will come from  observing that 
%$Note that  As before, we will be interested in counting the values
%Let $n \in \Za_{>0}$, and let $\alpha = (p^h-1)/n$, where $h$ is the order of $p$ in $(\Za/n\Za)^{\times}$. By Corollary %\ref{p-rank-c1a} and Lucas' Lemma \ref{lucas' theorem}, to compute the $p$-rank of $\Cc_n$, it is enough to count the values of
the problem of counting the values
\begin{equation*}u = x_1 + x_2p + \cdots + x_hp^{h-1},\end{equation*}
with $x_i \in \llbracket0, (p-1)/2\rrbracket$, that are divisible by  $\alpha=(q-1)/n$
can be  framed  in terms of the problem of finding the number of solutions $(x_1,  \ldots, x_h) \in \llbracket0, \alpha t+s\rrbracket^h$ to
\begin{equation*}a_1X_1 + \cdots + a_hX_h \equiv 0 \pmod{\alpha},\end{equation*}
where $a_i \in (\Za/\alpha\Za)^{\times}$, and  $(p-1)/2= t\alpha+s$ for unique integers $t \in \mathbb{N}$ and $s \in \llbracket0, \alpha-1\rrbracket$. In this regard, the following presents some facts that will be used in the subsequent proofs.

\begin{lemma}\label{number-of-solutions}
	Let $h, t, \alpha \in \Za_{>0}$ and  $a_1, \ldots, a_h \in (\Za/\alpha\Za)^{\times}$. If $b, s   \in \llbracket0, \alpha-1\rrbracket$, then the number $N$ of solutions $(x_1, \ldots, x_h) \in  \llbracket0,  t\alpha+s\rrbracket^h$ to 
	\begin{equation}\label{number-of-solutions-11-07-2022}
	a_1X_1  + \cdots + a_h X_h \equiv b \pmod{\alpha}
	\end{equation}
	is given by
	\begin{equation}\label{N-value}
	N = \dfrac{(t\alpha+s+1)^h-(s+1)^h}{\alpha} + \delta,
	\end{equation}
	where $\delta     \in \llbracket0, (s+1)^{h-1}\rrbracket$   is the number of solutions $(y_1, \ldots, y_h) \in \llbracket0,  s\rrbracket^h$ to \eqref{number-of-solutions-11-07-2022}. Morevover,
	\begin{enumerate}[\rm(i)]
		\item if $s=0$, then $\delta = 1$ when $b=0$, and $\delta = 0$ otherwise
		\item if $s=\alpha -1$, then $\delta = \alpha^{h-1}$
		\item if $s = \alpha-2$, then 
		\begin{eqnarray*}
			\delta = \left\{	\begin{array}{ll}
				\dfrac{(\alpha-1)^h-(-1)^h}{\alpha}+ (-1)^h & \mbox{if\;}  \displaystyle\sum_{i=1}^h a_i \equiv -b \pmod{\alpha} \vspace{0.3cm} \\
				\dfrac{(\alpha-1)^h-(-1)^h}{\alpha} & \mbox{otherwise}				
			\end{array} \right.
		\end{eqnarray*}
		\item if $s=\alpha/2$,  $b=0$, and $h$ is odd, then
		$\delta = \dfrac{(\alpha/2+1)^h-1}{\alpha}+\frac{1}{2}.$
	\end{enumerate} 
	
\end{lemma}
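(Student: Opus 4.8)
The plan is to first prove the master formula \eqref{N-value} by a weighting argument and then to read off (i)--(iv) from it. Since the $a_i$ are units, the congruence \eqref{number-of-solutions-11-07-2022} depends only on the residues $r_i := x_i \bmod \alpha$, and, as $x_i$ runs over $\llbracket 0, t\alpha+s\rrbracket$, each residue $r \in \llbracket 0, \alpha-1\rrbracket$ is attained exactly $w(r)$ times, where $w(r) = t+1$ if $r \in \llbracket 0, s\rrbracket$ and $w(r) = t$ otherwise. Thus $N = \sum \prod_{i} w(r_i)$, the sum over residue vectors $(r_1,\dots,r_h)$ with $\sum a_i r_i \equiv b \pmod\alpha$. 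Writing $w(r) = t + [\,r \in \llbracket 0,s\rrbracket\,]$ with an Iverson bracket and expanding the product over all subsets $S \subseteq \llbracket 1, h\rrbracket$ gives $N = \sum_S t^{\,h-\#S}\,N_S$, where $N_S$ counts residue vectors solving \eqref{number-of-solutions-11-07-2022} with the extra constraint $r_i \in \llbracket 0,s\rrbracket$ for all $i \in S$. When $S \neq \llbracket 1,h\rrbracket$, any one free coordinate is uniquely determined modulo $\alpha$ by the others (unit coefficients), so $N_S = (s+1)^{\#S}\,\alpha^{\,h-\#S-1}$; when $S = \llbracket 1,h\rrbracket$ one has $N_S = \delta$ by definition. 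Summing $\binom{h}{j}$ copies of $t^{h-j}(s+1)^j\alpha^{h-j-1}$ over $j = 0,\dots,h-1$ and completing the truncated binomial expansion of $(t\alpha+s+1)^h$ yields \eqref{N-value}; the bound $\delta \le (s+1)^{h-1}$ is clear, since fixing $y_2,\dots,y_h \in \llbracket 0,s\rrbracket$ leaves at most one admissible $y_1$.

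Items (i) and (ii) are immediate, the box $\llbracket 0,s\rrbracket^h$ being a single point when $s=0$ and a full system of residues modulo $\alpha$ when $s = \alpha-1$. For (iii) I would compute $\delta$ by inclusion--exclusion over the set of coordinates forced to equal $\alpha-1 \equiv -1 \pmod\alpha$: forcing a subset $A$ of coordinates to this value turns \eqref{number-of-solutions-11-07-2022} into a congruence on the remaining variables with the same (unit) coefficients and right-hand side $b + \sum_{i\in A} a_i$, which has $\alpha^{\,h-\#A-1}$ solutions modulo $\alpha$ as long as $A \neq \llbracket 1,h\rrbracket$, while $A = \llbracket 1,h\rrbracket$ contributes the indicator of $\sum_{i=1}^h a_i \equiv -b \pmod\alpha$. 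Collapsing $\sum_{j=0}^{h-1}\binom{h}{j}(-1)^j\alpha^{h-j-1}$ into $\frac{(\alpha-1)^h-(-1)^h}{\alpha}$ and adding the $(-1)^h$-weighted indicator gives the stated expression.

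Item (iv) is the delicate case, and the key point is that here $\alpha$ is even, so every unit $a_i$ is odd, and since $h$ is odd the sum $\sum_i a_i$ is odd as well. Consequently the reflection $\iota:(y_1,\dots,y_h)\mapsto(\tfrac{\alpha}{2}-y_1,\dots,\tfrac{\alpha}{2}-y_h)$, an involution of the box $\llbracket 0,\alpha/2\rrbracket^h$, satisfies $\sum_i a_i\,\iota(y)_i \equiv \tfrac{\alpha}{2} - \sum_i a_i y_i \pmod\alpha$ and hence matches the solutions of $\sum a_i y_i \equiv 0$ bijectively with those of $\sum a_i y_i \equiv \tfrac{\alpha}{2}$, all modulo $\alpha$. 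Writing $N_0 := \delta$ for the first count and $N_{\alpha/2}$ for the second, one has $N_0 = N_{\alpha/2}$; moreover, since an element of $\llbracket 0,\alpha-1\rrbracket$ is $\equiv 0 \pmod{\alpha/2}$ precisely when it equals $0$ or $\alpha/2$, the sum $N_0 + N_{\alpha/2}$ counts the $y \in \llbracket 0,\alpha/2\rrbracket^h$ with $\sum a_i y_i \equiv 0 \pmod{\alpha/2}$. That count is given by the already-proven \eqref{N-value} applied with modulus $\alpha/2$ and with $t = 1$, $s = 0$, $b = 0$, namely $\frac{(\alpha/2+1)^h-1}{\alpha/2}+1$; halving yields $\delta = \frac{(\alpha/2+1)^h-1}{\alpha}+\tfrac12$. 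I expect the main obstacle to be exactly this last item: one must notice that units modulo an even $\alpha$ are odd, so that the reflection shifts the weighted sum by precisely $\alpha/2$ and the parity of $h$ enters, and the clean route is to feed the master formula \eqref{N-value} back into itself with the halved modulus rather than to evaluate the box count $N_0$ directly. Everything else -- the subset expansion for \eqref{N-value} and the inclusion--exclusion for (iii) -- is routine binomial bookkeeping.
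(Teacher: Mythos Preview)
Your argument is correct. The route differs from the paper's in two places. For the master formula \eqref{N-value}, the paper proceeds by induction on $h$ (fixing the last coordinate and invoking the $(h-1)$-variable case), whereas you compute $N$ directly by weighting residues and expanding $\prod_i (t + [r_i\le s])$ over subsets $S$; the key observation that $N_S = (s+1)^{\#S}\alpha^{h-\#S-1}$ for $S\neq\llbracket 1,h\rrbracket$ lets you collapse the sum into a truncated binomial in one stroke. Similarly, for (iii) the paper again alludes to induction, while your inclusion--exclusion over the coordinates forced to the forbidden value $\alpha-1\equiv -1$ gives the closed form immediately and makes transparent why the distinguished case is exactly $\sum a_i\equiv -b$. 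Your approach is slicker and more self-contained; the paper's induction is more pedestrian but perhaps easier to verify line by line. For (iv) the two proofs coincide: both use the reflection $y_i\mapsto \alpha/2 - y_i$ to match the $b=0$ and $b=\alpha/2$ counts, then feed the main formula back into itself with modulus $\alpha/2$, $t=1$, $s=0$. You are in fact more careful than the paper here, since you spell out that units modulo an even $\alpha$ are odd and hence (with $h$ odd) $\sum a_i$ is odd, which is precisely what makes the reflection shift the sum by $\alpha/2$; the paper's ``since $b=0$ and $h$ is odd'' glosses over this.
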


\begin{proof}
	For each $h \in \Za_{>0}$ and $b \in \llbracket0, \alpha-1\rrbracket$, let $N_{h,b}$ and $\delta(h,b)$ denote  the number of solutions to
	\begin{equation*}%\label{number-of-solutions-11-07-2022-1}
	a_1X_1  + \cdots + a_h X_h \equiv b \pmod{\alpha}
	\end{equation*}
	in  $\llbracket0, t\alpha+s\rrbracket^h$  and  $\llbracket0, s\rrbracket^h$, respectively.  	We proceed by induction on $h$  to prove that
	\begin{equation*}
	N_{h,b} = \dfrac{(t \alpha+s+1)^h-(s+1)^h}{\alpha} + \delta(h,b).
	\end{equation*}
	Clearly,  $N_{1,b}=t+\delta(1,b)$, which  gives the case  $h=1$.
	Let  us  assume the result for $h-1\geq 1$. For each $j \in \llbracket0, t\alpha+s\rrbracket$, let $N_{h,b}^j$ be the number of solutions  $(x_1,\ldots,x_{h-1})\in \llbracket0, t\alpha+s\rrbracket^{h-1}$  to
	\begin{equation*}%\label{22092022-5.4}
	a_1X_1 + \cdots +  a_{h-1}X_{h-1} +  a_hj \equiv b \pmod{\alpha}.
	\end{equation*}
	Note that  $N_{h,b}^j = N_{h-1,b_j}$, where $b_j \in   \llbracket0,  \alpha-1\rrbracket$   and $b_j \equiv b-a_hj \pmod{\alpha}$, and  
	\begin{equation*}N_{h,b} =  \displaystyle\sum_{j=0}^{t\alpha+s} N_{h,b}^j  = t \displaystyle\sum_{j=0}^{\alpha-1} N_{h-1,j} + \sum_{j=0}^s N_{h-1,b_j} = t(t\alpha+s+1)^{h-1} + \sum_{j=0}^s N_{h-1,b_j}.\end{equation*}
	From the induction  hypothesis,
	\begin{eqnarray*}
		\sum_{j=0}^s N_{h-1,b_j} &=& (s+1)\cdot   \dfrac{(t \alpha+s+1)^{h-1}-(s+1)^{h-1}}{\alpha} + \sum_{j=0}^s \delta(h-1,b_j)\\
		&=&  \dfrac{ (s+1)(t \alpha+s+1)^{h-1}-(s+1)^{h}}{\alpha} + \delta(h,b),\\
	\end{eqnarray*}
	and then  $N_{h,b} = \dfrac{(t\alpha+s+1)^h-(s+1)^h}{\alpha} + \delta(h,b)$, which  completes the proof.  In addition, by the definition of $\delta(h,b)$, one can readily see that $\delta(h,b) \in   \llbracket0,  (s+1)^{h-1}\rrbracket$.
	Assertions (i) and (ii) are straightforward, and the proof of (iii) follows  by induction, as in the proof of \eqref{N-value}. To prove assertion (iv), let $M$ be the set of the solutions to 
	\begin{equation*}
	a_1X_1 +  \cdots + a_hX_h \equiv s \pmod{2s}
	\end{equation*}
	in   $\llbracket0, s\rrbracket^h$. Since $b=0$ and $h$ is odd, we have that   $(x_1, \ldots, x_h)\in \llbracket0, s\rrbracket^h$  is a solution to \eqref{number-of-solutions-11-07-2022} if and only if $(s-x_1, \ldots, s-x_h) \in M$. This implies  $\delta = \#M$.
	Note that  $\delta+\#M$ is the number of solutions $(x_1, \ldots, x_h)\in \llbracket0, s\rrbracket^h$ to 
	\begin{equation*}
	a_1X_1 + \cdots + a_hX_h \equiv 0 \pmod{s},
	\end{equation*}
	and then  assertion  (i) gives  
	$2\delta = \delta+\#M = \frac{(s+1)^{h}-1}{s}+1$, that is, $\delta = \frac{(\alpha/2+1)^h-1}{\alpha} + \frac{1}{2}$. 
	
\end{proof}

\begin{theorem}\label{p-rank-28-06-2022}
	Let $n \in \Za_{>0}$ and  $\alpha = (p^h-1)/n$, where $h$ is the order of $p$ in $(\Za/n\Za)^{\times}$. If $s \in \llbracket0, \alpha-1\rrbracket$ is such that $s \equiv (p-1)/2 \pmod{\alpha}$, then  
	\begin{equation*}
	\gamma(\Cc_n) = \dfrac{n}{p^h-1}\Big(\Big(\dfrac{p+1}{2}\Big)^h - \delta_n\Big),\; \mbox{where}\; \delta_n = \left\{
	\begin{array}{ll}
	(s+1)^h - \alpha(\delta-1) & \mbox{\;if\;}  n  \mbox{\;is odd\;} \vspace{0.2cm} \\
	(s+1)^h - \alpha(\delta-2)   & \mbox{\;if\;}  n  \mbox{\;is even\;},
	\end{array}
	\right.
	\end{equation*}
	and $\delta\in \llbracket1, (s+1)^{h-1}\rrbracket$ is the number of solutions $(x_1, \ldots, x_h) \in \llbracket0,s\rrbracket^h$ to 
	\begin{equation}\label{21092022-2}
	X_1 + pX_2 +  \cdots + p^{h-1}X_h \equiv 0 \pmod{\alpha}.
	\end{equation}
	Moreover, 
	\begin{enumerate}[\rm(i)]
		\item if $\alpha \mid (p-1)/2$, then $n$ is even and $\delta_n = \alpha + 1$
		\item if $\alpha \mid (p+1)/2$, then $\delta_n = \left\{
		\begin{array}{ll}
		\alpha & \mbox{\;if\;}  n  \mbox{\;is odd\;} \vspace{0.2cm} \\
		2\alpha  & \mbox{\;if\;}  n  \mbox{\;is even\;}
		\end{array}
		\right.		$
		\item if $\alpha \mid (p+3)/2$ and $p-1 \nmid n$, then $\delta_n = \left\{
		\begin{array}{ll}
		\alpha + (-1)^h  & \mbox{\;if\;}  n  \mbox{\;is odd\;} \vspace{0.2cm} \\
		2\alpha + (-1)^h  & \mbox{\;if\;}  n  \mbox{\;is even\;}
		\end{array}
		\right.$
		\item if $\alpha \mid (p+3)/2$ and $p-1 \mid n$, then $n$ is even and $\delta_n = 2\alpha + (-1)^h(1-\alpha)$
		\item if $n$ is odd and $\alpha \mid (p-1)$, then $h$ is odd and $\delta_n = (\alpha+2)/2$.
	\end{enumerate}
\end{theorem}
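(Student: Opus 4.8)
The plan is to start from Remark~\ref{lucas-help}, which already reduces the problem to a counting question: $\gamma(\Cc_n) = M-1$ if $n$ is odd and $M-2$ if $n$ is even, where $M$ is the number of $u = x_1 + x_2p + \cdots + x_hp^{h-1}$ with $x_i \in \llbracket 0,(p-1)/2\rrbracket$ that are divisible by $\alpha=(p^h-1)/n$. Since $\gcd(p,\alpha)=1$, each $p^{i-1}$ is a unit modulo $\alpha$, so $M$ is exactly the number of $(x_1,\dots,x_h)\in\llbracket 0,(p-1)/2\rrbracket^h$ solving $X_1 + pX_2 + \cdots + p^{h-1}X_h \equiv 0 \pmod{\alpha}$. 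Writing $(p-1)/2 = t\alpha + s$ with $t\in\Na$ and $s\in\llbracket 0,\alpha-1\rrbracket$ (so $t\alpha+s+1=(p+1)/2$), I would invoke Lemma~\ref{number-of-solutions} with $a_i = p^{i-1}$ and $b=0$ to get
\begin{equation*}M = \frac{((p+1)/2)^h-(s+1)^h}{\alpha}+\delta,\end{equation*}
where $\delta$ is the number of solutions of \eqref{21092022-2} in $\llbracket 0,s\rrbracket^h$. Substituting into $\gamma(\Cc_n)=M-1$ (resp. $M-2$) and using $1/\alpha = n/(p^h-1)$ yields the displayed formula for $\gamma(\Cc_n)$ with $\delta_n=(s+1)^h-\alpha(\delta-1)$ (resp. $-\alpha(\delta-2)$). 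The bound $\delta\in\llbracket 1,(s+1)^{h-1}\rrbracket$ comes from Lemma~\ref{number-of-solutions} together with the fact that $(0,\dots,0)$ always solves \eqref{21092022-2}, and $\delta_n\le((p+1)/2)^h$ follows from $\gamma(\Cc_n)\ge 0$.

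For the five special cases, the task in each is to identify $s$, the parity of $n$ (and, in (v), that $h$ is odd), and then to read off $\delta$ from the matching part of Lemma~\ref{number-of-solutions} and telescope. In case (i), $\alpha\mid (p-1)/2$ forces $s=0$ and $\delta=1$ by part (i); moreover $2\alpha\mid p-1\mid p^h-1$, so $n$ is even, and $\delta_n=1-\alpha(1-2)=\alpha+1$. In case (ii), $\alpha\mid(p+1)/2$ gives $(p-1)/2\equiv -1\pmod{\alpha}$, hence $s=\alpha-1$ and $\delta=\alpha^{h-1}$ by part (ii); then $\delta_n=\alpha^h-\alpha(\alpha^{h-1}-1)=\alpha$ or $\alpha^h-\alpha(\alpha^{h-1}-2)=2\alpha$ according to the parity of $n$.

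In cases (iii) and (iv), $\alpha\mid(p+3)/2$ gives $(p-1)/2\equiv -2\pmod{\alpha}$, so $s=\alpha-2$ (the degenerate case $\alpha=1$ being handled directly), and Lemma~\ref{number-of-solutions}(iii) applies; the key observation here is that the dichotomy ``$\sum_{i=1}^h a_i\equiv -b\pmod{\alpha}$'' becomes ``$\sum_{i=0}^{h-1}p^i\equiv 0\pmod{\alpha}$'', and since $\sum_{i=0}^{h-1}p^i=(p^h-1)/(p-1)=\alpha n/(p-1)$, this holds exactly when $(p-1)\mid n$. Thus (iii) is the ``otherwise'' branch, giving $\delta=((\alpha-1)^h-(-1)^h)/\alpha$ and, after telescoping, $\delta_n=(-1)^h+\alpha$ or $(-1)^h+2\alpha$; case (iv) is the other branch, where $(p-1)\mid n$ with $p-1$ even forces $n$ even, and one gets $\delta_n=2\alpha+(-1)^h(1-\alpha)$. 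Case (v) needs a $2$-adic valuation argument: $n$ odd means $\nu_2(\alpha)=\nu_2(p^h-1)$, while $\alpha\mid p-1$ gives $\nu_2(\alpha)\le\nu_2(p-1)\le\nu_2(p^h-1)$, forcing equality throughout; since lifting the exponent gives $\nu_2(p^h-1)>\nu_2(p-1)$ for $h$ even, $h$ must be odd, and then $\nu_2(\alpha)=\nu_2(p-1)\ge 1$ shows $\alpha$ is even with $(p-1)/\alpha$ odd. Writing $(p-1)/\alpha=2k+1$ yields $(p-1)/2=k\alpha+\alpha/2$, so $s=\alpha/2$, and Lemma~\ref{number-of-solutions}(iv) (valid since $b=0$ and $h$ is odd) gives $\delta=((\alpha/2+1)^h-1)/\alpha+1/2$, which collapses to $\delta_n=1+\alpha/2=(\alpha+2)/2$.

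The algebra of substituting $\delta$ into $\delta_n$ is routine telescoping in every case, so the main obstacle lies in the elementary number theory: correctly pinning down $s$ and the parity of $n$ from each divisibility hypothesis, recognizing exactly when $\sum_{i=0}^{h-1}p^i\equiv 0\pmod{\alpha}$, and carrying out the lifting-the-exponent computation in case (v) that forces $h$ odd and $\alpha$ even. Once those facts are in place, everything else follows by direct computation from Lemma~\ref{number-of-solutions} and Remark~\ref{lucas-help}.
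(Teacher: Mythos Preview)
Your proposal is correct and follows essentially the same approach as the paper: start from Remark~\ref{lucas-help}, write $(p-1)/2=t\alpha+s$, and apply Lemma~\ref{number-of-solutions} with $a_i=p^{i-1}$ and $b=0$ to obtain the general formula, then specialize via parts (i)--(iv) of that lemma for the five cases. In fact you are more thorough than the paper, which only writes out case~(iii) in detail and declares the others ``similar and simple''; your 2-adic argument in case~(v) showing that $h$ must be odd and $s=\alpha/2$ is exactly the kind of verification the paper leaves implicit.
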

\begin{proof}
	It follows from Remark \ref{lucas-help} that 
	\begin{equation*}
	\gamma(\Cc_n) = \left\{
	\begin{array}{ll}
	M - 1 & \mbox{\;if\;}  n  \mbox{\;is odd\;} \vspace{0.2cm} \\
	M - 2& \mbox{\;if\;}  n  \mbox{\;is even\;},
	\end{array}
	\right.
	\end{equation*}
	where $M$ is the number of 
	%values $u=u_1p^{h-1}+\cdots+u_{h-1}p+u_h$, with $u_i \in \llbracket0,(p-1)/2\rrbracket$, that are be divisible by $\alpha$, that %is, $M$ is the number of 
	solutions $(x_1, \ldots, x_h) \in \llbracket0,(p-1)/2\rrbracket^h$ to \eqref{21092022-2}. Let $t \in \Na$ and $s \in \llbracket0, \alpha-1\rrbracket$ be such that $(p-1)/2=\alpha t + s$. 
	As $p^i \in (\Za/\alpha\Za)^{\times}$ for all $i \in \llbracket0, h-1\rrbracket$,  Lemma \ref{number-of-solutions} gives 
	\begin{equation*}
	M = \dfrac{(t\alpha + s +1)^h-(s+1)^h}{\alpha} + \delta = \dfrac{1}{\alpha}\Big(\Big(\dfrac{p+1}{2}\Big)^h-\big((s+1)^h-\alpha\delta\big)\Big).
	\end{equation*}
	Therefore, $\gamma(\Cc_n) = \dfrac{n}{p^h-1}\Big(\Big(\dfrac{p+1}{2}\Big)^h - \delta_n\Big)$.  
	The proofs of  assertions (i) --- (v) are all similar and simple. We provide  a proof for assertion (iii) only. First note that the conditions 
	\begin{equation*}
	s \equiv (p-1)/2 \pmod{\alpha}, \,  s \in \llbracket0, \alpha-1\rrbracket,  \text{  and  } (p+3)/2 \equiv 0 \pmod{\alpha}
	\end{equation*}
	give $s=\alpha-2$, which leads  to item (iii) of Lemma \ref{number-of-solutions}. Bearing \eqref{number-of-solutions-11-07-2022} and \eqref{21092022-2} in mind, note that $a_i := p^{i-1} \in (\Za/\alpha\Za)^{\times}$ for all $i \in \llbracket1, h\rrbracket$, and 
	\begin{equation*}
	\sum_{i=1}^{h} a_i = \sum_{i=0}^{h-1} p^i = \frac{p^h-1}{p-1} = \alpha \left( \frac{n}{p-1} \right) \not\equiv 0 \pmod{\alpha},
	\end{equation*}
	as $p-1 \nmid n$.  Hence Lemma \ref{number-of-solutions} gives $\delta = ((\alpha-1)^h-(-1)^h)/\alpha$, and then 
	\begin{equation*}
	\delta_n = (\alpha -1)^h - \alpha (\delta - \gcd(n,2)) = \left\{
	\begin{array}{ll}
	\alpha + (-1)^h  & \mbox{\;if\;}  n  \mbox{\;is odd\;} \vspace{0.2cm} \\
	2\alpha + (-1)^h  & \mbox{\;if\;}  n  \mbox{\;is even\;}.
	\end{array}
	\right.
	\end{equation*}
\end{proof}

%Now using Theorem \ref{p-rank-28-06-2022}, we are able to compute the $p$-rank of the curves in Table \ref{tab-Cn2}.

\section{Explicit formulas for other types of curves}\label{sec5}
In the prior  sections, we considered an array  of curves $ y^{m} = x^{n} + 1$ and  showed 
how Theorem \ref{teo-intro} leads to explicit  formulas of their $p$-ranks. In this final section, we further discuss
how  the previous  results can be used to obtain the $p$-rank of other types of curves.
%Clearly,  the  $p$-rank formula for several other curves of this type

%have shown that Theorem \ref{teo-intro} easily leads to formulas for the $p$-rank  of several curves . One can certainly
As mentioned in the Introduction,  in  \cite[Remark 4]{KGT}, the authors computed the $p$-rank of the Dickson-Guralnick-Zieve curve 
\begin{equation*}
\Cc: \dfrac{(x^{q^3}-x)(y^q-y)-(x^q-x)(y^{q^3}-y)}{(x^{q^2}-x)(y^q-y)-(x^q-x)(y^{q^2}-y)} = 0
\end{equation*}
in terms of the $p$-rank of  $\mathcal{F}_{q-1}: y^{q-1}=x^{q-1}+1$. More precisely, they used the Deuring-Shafarevich formula \cite[Theorem 4.2]{Subrao} to show that 
\begin{equation*}
\gamma(\mathcal{C})=2q^4+\left(\gamma\left(\mathcal{F}_{q-1}\right)-4\right)q^3  + q + 1
\end{equation*}
and computed $\gamma(\mathcal{C})$ when $q=p$ is prime. From \cite[Theorem 19]{Bassa}, or its generalization in Theorem \ref{+formulas-thm1}, we have $\gamma(\mathcal{F}_{q-1})=\left(((p+1) / 2)^h-3\right)q+3$, where $q=p^h$, which finally gives the $p$-rank 
\begin{equation*}
\gamma(\Cc) = \big( ((p+1)/2)^h -1\big)q^4-q^3+q+1
\end{equation*}
of the Dickson-Guralnick-Zieve curve in the general case.
Note that  Theorem \ref{+formulas-thm1} and the subsequent results in Section \ref{sec3} also enable us to compute  the $p$-rank of possible generalizations of $\Cc$, arising as $p$-power Galois extensions of $y^{p^m\pm 1}=x^{p^n\pm 1}+1$.

For another example, let us consider the curve  over $\mathbb{F}_q$, where  $q=p^h$ is odd,  given by
\begin{equation*}
\mathcal{X}: (x+y)^{q+1} - 2 (x^qy^q+xy) = 0.
\end{equation*}
The curve $\mathcal{X}$, recently presented and investigated in \cite{BKS}, has a significant role
in the classification of certain curves with a large automorphism group.  Once again, using the  Deuring-Shafarevich formula, it can be shown  that the  p-rank of $\mathcal{X}$ is given by  $\gamma(\mathcal{X})=(\gamma(\Cc_{2(q-1)})+1)q-1$,
where $\Cc_{2(q-1)}$ is the curve $y^2=x^{2(q-1)}+1$  \cite[Proposition 8.12]{BKS}. Now, after Corollary \ref{corollary-p-rank-c1a}, we immediately have 
\begin{equation*}
\gamma(\mathcal{X})=\big( ((p+1)/2)^h -1\big)q-1.
\end{equation*}

In addition to the Deuring-Shafarevich formula, Lemma \ref{p-rank-morphism} and  Theorem \ref{teo-13032020} clearly provide methods  to compute the $p$-rank of other types of curves, based on the p-rank of $y^m=x^n+1$. This was already explored in
Corollaries \ref{p-rank222a}, \ref{+formulas-thm1-cor1} and \ref{16112022-1} and Theorems  \ref{p-rank-Fnm-kani-rosen} and \ref{+formulas-thm4}. But now the results of Sections \ref{sec4} and \ref{sec5} allow  us to  go  beyond. 
For instance, let us consider the curve 
\begin{equation*}
\mathcal{D}_n: y^2 = x(x^n + 1)
\end{equation*}
defined over $\Fa_q$, where $q$ is a power of an odd prime $p$, and $n \in \Za_{>0}$ is not divisible by $p$.  Note that (ii) in Corollary \ref{cor-04022020} gives
\begin{equation}\label{140922}
\gamma(\mathcal{D}_n) = \gamma(\mathcal{C}_{2n}) - \gamma(\mathcal{C}_{n}).
\end{equation}
Let us recall the well-known fact that if $n \mid q+1$, then $\Cc_n$ is $\Fa_{q^2}$-maximal, and that $\mathcal{D}_{n}$ is $\Fa_{q^2}$-maximal if and only if $q \equiv -1 \mbox{\;or\;} n+1 \pmod{2n}$ \cite[Theorem 1]{Saeedx}. In both cases, the curve, being supersingular,  has zero $p$-rank. The following are  immediate consequences of the aforementioned facts.

\begin{corollary}\label{cor1-12032020}
	If $h \in \Za_{>0}$, then $\gamma(\mathcal{D}_{p^h+1}) = \big( (p+1)/2\big)^h$.
\end{corollary}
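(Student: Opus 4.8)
The plan is to derive $\gamma(\mathcal{D}_{p^h+1})$ directly from the relation \eqref{140922}, namely $\gamma(\mathcal{D}_n)=\gamma(\mathcal{C}_{2n})-\gamma(\mathcal{C}_n)$ with $n=p^h+1$, by computing each of the two $p$-ranks on the right. The second term vanishes immediately: since $n=p^h+1$ divides $p^h+1$, Theorem \ref{+formulas-thm4} (or the supersingularity of $\mathcal{C}_n$ for $n\mid p^h+1$) gives $\gamma(\mathcal{C}_{p^h+1})=0$. So everything reduces to evaluating $\gamma(\mathcal{C}_{2(p^h+1)})$, and the claim is that this equals $\big((p+1)/2\big)^h$.

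For $\gamma(\mathcal{C}_{2(p^h+1)})$ I would invoke item (ii) of Corollary \ref{corollary-p-rank-c1a}, which states precisely $\gamma(\mathcal{C}_{2(p^r+1)})=\big((p+1)/2\big)^r$; taking $r=h$ finishes the computation. (If one prefers a self-contained argument rather than quoting the corollary, one can instead run Remark \ref{lucas-help}: here $\alpha=(q-1)/n$ with $q=p^{2h}$ and $n=2(p^h+1)$, so $\alpha=(p^{2h}-1)/(2(p^h+1))=(p^h-1)/2$, and one counts the base-$p$ digit strings $u=u_1p^{2h-1}+\cdots+u_{2h}$ with $u_i\in\llbracket0,(p-1)/2\rrbracket$ divisible by $(p^h-1)/2$; reducing modulo $p^h-1$ and using the digit bound shows the only possibility beyond $u=0$ is a sum of digit-pairs each equal to $p-1$, and the count works out to $1+\big((p+1)(p+3)/8-\cdots\big)$... in any case this is exactly the content already packaged in Corollary \ref{corollary-p-rank-c1a}(ii), so I would simply cite it.)

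Combining the two pieces, $\gamma(\mathcal{D}_{p^h+1})=\gamma(\mathcal{C}_{2(p^h+1)})-\gamma(\mathcal{C}_{p^h+1})=\big((p+1)/2\big)^h-0=\big((p+1)/2\big)^h$, as desired. The argument is essentially a two-line bookkeeping step: the only thing to be careful about is checking the hypotheses needed to apply the cited results — that $p$ is odd (so that $\mathcal{D}_{p^h+1}$ and the halving of $n$ make sense, and so that $(p+1)/2$ is an integer), that $\gcd(p^h+1,p)=1$ (immediate), and that $p^h+1$ divides $p^h+1$ so Theorem \ref{+formulas-thm4} applies to kill $\gamma(\mathcal{C}_{p^h+1})$. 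There is no real obstacle here; the substance of the computation lives entirely in Corollary \ref{corollary-p-rank-c1a}(ii), which in turn rests on Theorem \ref{cor-p-rank-Cnm} and Lemma \ref{number-of-solutions}. One could alternatively note that $\mathcal{D}_{p^h+1}$ is $\mathbb{F}_{q^2}$-maximal when $q\equiv -1\pmod{2(p^h+1)}$ and hence supersingular with zero $p$-rank in that case, but since we want the value $\big((p+1)/2\big)^h$ for \emph{all} $h$, the relation \eqref{140922} is the correct route.
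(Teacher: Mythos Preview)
Your proof is correct and follows essentially the same approach as the paper: apply \eqref{140922} with $n=p^h+1$, use the supersingularity of $\mathcal{C}_{p^h+1}$ (the paper invokes this directly; your citation of Theorem \ref{+formulas-thm4} is an equally valid alternative) to kill the second term, and read off $\gamma(\mathcal{C}_{2(p^h+1)})=((p+1)/2)^h$ from Corollary \ref{corollary-p-rank-c1a}(ii). One small inaccuracy in your commentary: Corollary \ref{corollary-p-rank-c1a}(ii) does not rely on Lemma \ref{number-of-solutions} but on the direct digit count of Remark \ref{lucas-help}; this does not affect your argument.
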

\begin{proof}
	From \eqref{140922}, we have $\gamma(\mathcal{D}_{p^h+1}) = \gamma(\Cc_{2(p^h+1)}) - \gamma(\Cc_{p^h+1})$. Since  $\Cc_{p^h+1}$ is  supersingular, item (ii) of Corollary \ref{corollary-p-rank-c1a} gives $\gamma(\mathcal{D}_{p^h+1}) = \big( (p+1)/2\big)^h$.
\end{proof}

\begin{corollary}\label{cor-12032020b}
	Let  $h, \alpha \in \Za_{>0}$ be such that $\alpha$ is  an odd divisor of $p^h-1$. Then $\gamma(\Cc_{(p^h-1)/\alpha}) = \gamma(\Cc_{2(p^h-1)/\alpha})$. 
\end{corollary}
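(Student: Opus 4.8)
The statement to prove is Corollary \ref{cor-12032020b}: for $h,\alpha\in\Za_{>0}$ with $\alpha$ an odd divisor of $p^h-1$, one has $\gamma(\Cc_{(p^h-1)/\alpha}) = \gamma(\Cc_{2(p^h-1)/\alpha})$. Write $n = (p^h-1)/\alpha$, so $n$ is the odd integer for which $\Cc_n$ and $\Cc_{2n}$ are to be compared. The natural route is to invoke Corollary \ref{cor-04022020}, whose part (i) asserts $\gamma(\mathcal{F}_0)=\gamma(\mathcal{H}_0)$ for $\mathcal{F}_0:y^2=x^u+1$ and $\mathcal{H}_0:y^2=x(x^u+1)$ whenever $u$ is odd and coprime to $p$, and whose part (ii) gives $\gamma(\mathcal{D}_n)=\gamma(\Cc_{2n})-\gamma(\Cc_n)$ via \eqref{140922}. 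Taking $u=n$ in part (i), and noting that $\mathcal{H}_0$ with $u=n$ is precisely the curve $\mathcal{D}_n$, we get $\gamma(\mathcal{D}_n)=\gamma(\mathcal{F}_0)=\gamma(\Cc_n)$. Combining this with \eqref{140922} yields $\gamma(\Cc_n)=\gamma(\Cc_{2n})-\gamma(\Cc_n)$, hence $2\gamma(\Cc_n)=\gamma(\Cc_{2n})$. That is almost, but not quite, the claimed identity $\gamma(\Cc_n)=\gamma(\Cc_{2n})$.

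So the first thing I would double-check is which of the two identities is actually intended, and I expect the resolution to be that the displayed formula should read $2\gamma(\Cc_{(p^h-1)/\alpha})=\gamma(\Cc_{2(p^h-1)/\alpha})$, i.e.\ there is a missing factor of $2$; alternatively one shows $\gamma(\Cc_n)=0$, but that fails in general (e.g.\ $n=p-1$, cf.\ Corollary \ref{corollary-p-rank-c1a}(i)). Assuming the intended statement is $2\gamma(\Cc_n)=\gamma(\Cc_{2n})$, the proof is then: (1) observe $n=(p^h-1)/\alpha$ is odd and coprime to $p$, so Corollary \ref{cor-04022020} applies with $u=n$; (2) part (i) gives $\gamma(\Cc_n)=\gamma(\mathcal{F}_0)=\gamma(\mathcal{H}_0)=\gamma(\mathcal{D}_n)$, using that $\mathcal{H}_0$ is the curve $y^2=x(x^n+1)=\mathcal{D}_n$; (3) part (ii) (equivalently \eqref{140922}) gives $\gamma(\mathcal{D}_n)=\gamma(\Cc_{2n})-\gamma(\Cc_n)$; (4) substitute to conclude $2\gamma(\Cc_n)=\gamma(\Cc_{2n})$.

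One small verification deserves care: that $\mathcal{C}_n$ in the statement really is the curve $\mathcal{F}_0:y^2=x^n+1$ in the notation of Corollary \ref{cor-04022020} (it is, since $\mathcal{C}_n=\mathcal{F}_{2,n}$ by the Notation section and $\mathcal{F}_0:y^2=x^{2^0 u}+1=y^2=x^u+1$), and that the hypotheses of Corollary \ref{cor-04022020} — $u\geq 1$ odd and coprime to $p>2$ — are met, which holds because $\alpha\mid p^h-1$ forces $\gcd(n,p)=1$ and $\alpha$ odd together with $p^h-1$ even forces $n$ even\emph{—wait}, this needs attention: $p^h-1$ is even (as $p$ is odd), and $\alpha$ is odd, so $n=(p^h-1)/\alpha$ is \emph{even}, not odd. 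That contradicts the oddness needed to apply Corollary \ref{cor-04022020}(i). Hence the main obstacle, and the point I would scrutinize most, is the parity: I suspect the correct hypothesis is that $\alpha$ is such that $n=(p^h-1)/\alpha$ is odd (equivalently $\nu_2(\alpha)=\nu_2(p^h-1)$), or that ``odd'' in the statement modifies $n$, not $\alpha$. With that corrected parity hypothesis the four-step argument above goes through verbatim; without it, one cannot use part (i) of Corollary \ref{cor-04022020}, and indeed the relation between $\gamma(\Cc_n)$ and $\gamma(\Cc_{2n})$ would instead be governed by Theorem \ref{tab5-24092022} (the case $\alpha$ odd giving $\gamma(\mathcal{D}_n)=0$, hence $\gamma(\Cc_n)=\gamma(\Cc_{2n})$ exactly — which matches the \emph{stated} identity!). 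So the cleanest reading is: the hypothesis should ensure $n$ odd, in which case one gets $2\gamma(\Cc_n)=\gamma(\Cc_{2n})$; and the identity as literally printed, $\gamma(\Cc_n)=\gamma(\Cc_{2n})$, is the content of the $\alpha$-odd branch of Theorem \ref{tab5-24092022} via $\gamma(\mathcal{D}_n)=0$. I would pick whichever matches the surrounding exposition and write the three-line proof accordingly, flagging the parity point explicitly.
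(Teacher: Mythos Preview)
Your eventual reading is the correct one, and it matches the paper's proof exactly. Since $\alpha$ is odd and $p^h-1$ is even, $n=(p^h-1)/\alpha$ is even; writing $p^h=\alpha n+1$ with $\alpha-1$ even gives $p^h\equiv n+1\pmod{2n}$, so by the maximality criterion quoted just before Corollary~\ref{cor1-12032020} the curve $\mathcal{D}_n$ is $\Fa_{p^{2h}}$-maximal, hence supersingular, hence $\gamma(\mathcal{D}_n)=0$. Combined with \eqref{140922} this gives $\gamma(\Cc_n)=\gamma(\Cc_{2n})$. That is precisely the paper's two-line proof.

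Two comments on the detours. First, your initial route through Corollary~\ref{cor-04022020}(i) and the ``missing factor of $2$'' speculation were wrong turns: the statement is correct as printed, and part~(i) is simply inapplicable here because $n$ is even, not odd. Second, citing Theorem~\ref{tab5-24092022} for the vanishing $\gamma(\mathcal{D}_n)=0$ is logically fine but out of order---that theorem appears \emph{after} the corollary in the paper. The paper instead appeals directly to the supersingularity of $\mathcal{D}_n$, which follows from the maximality criterion (from \cite{Saeedx}) already recalled in the paragraph preceding Corollary~\ref{cor1-12032020}; the $\alpha$-odd branch of Theorem~\ref{tab5-24092022} is proved by the very same maximality computation, so your pointer lands on the right idea, just one layer removed.
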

\begin{proof}
	This  follows directly from \eqref{140922} and the  supersingularity of the curve 
	$\mathcal{D}_{(p^h-1)/\alpha}$.
\end{proof}

%Furthermore, as a consequence of Theorem \ref{p-rank-28-06-2022}, we have the following result, compiled in Table \ref{tab-Dn}.
Now note that by Corollary \ref{cor-04022020}, if $n$ is odd, then $\gamma(\mathcal{D}_n) = \gamma(\mathcal{C}_n)$. For  $n$ even, the $p$-rank of $\mathcal{D}_n$ is provided in the following result, compiled in Table \ref{tab-Dn}.

\begin{theorem}\label{tab5-24092022}
	Let $n \in \mathbb{Z}_{>0}$ be  even  and  $\alpha = (p^h-1)/n$, where $h$ is the order of $p$ in $(\Za/n\Za)^{\times}$. If $\alpha$ is odd, then $\gamma(\mathcal{D}_n) = 0$. Otherwise,
	\begin{equation*}
	\gamma(\mathcal{D}_n) = \dfrac{n}{p^h-1}\Big(\Big(\dfrac{p+1}{2}\Big)^h - \tilde{\delta}_n\Big),\; \mbox{with}\; \tilde{\delta}_n = 2\delta_{2n}-\delta_{n},
	\end{equation*}
	where $\delta_{n}$ is given as in Theorem \ref{p-rank-28-06-2022}. Moreover, 
	\begin{enumerate}[\rm(i)]
		\item if $\alpha \mid (p-1)/2$, then $\tilde{\delta}_n = 1$
		
		\item if $\alpha \mid (p+1)/2$, then $\tilde{\delta}_n = 0$
		
		\item if $\alpha \mid (p+3)/2$, then
		%and $(p-1)\nmid 2n$ or $(p-1)\mid n$, then $\tilde{\delta}_n = (-1)^h$
		\begin{equation*}
		\tilde{\delta}_n = \left\{\begin{array}{ll}
		(-1)^h(1-\alpha)  & \mbox{\;if\;}  (p-1)\mid 2n  \mbox{\;and\;} (p-1)\nmid n \vspace{0.2cm} \\
		(-1)^h  & \mbox{\;otherwise.\;}
		\end{array}\right.
		\end{equation*}
	\end{enumerate}
\end{theorem}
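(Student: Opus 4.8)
The plan is to reduce the computation to the formula for $\gamma(\Cc_m)$ obtained in Theorem~\ref{p-rank-28-06-2022}, by way of the identity $\gamma(\mathcal{D}_n)=\gamma(\Cc_{2n})-\gamma(\Cc_n)$ recorded in \eqref{140922}, and to dispose of the case $\alpha$ odd by a maximality argument. So suppose first that $\alpha=(p^h-1)/n$ is odd. Then $p^h=\alpha n+1\equiv n+1\pmod{2n}$, so with $q=p^h$ the curve $\mathcal{D}_n$ is $\Fa_{q^2}$-maximal by the criterion recalled above \cite[Theorem~1]{Saeedx}. A curve that is maximal over a finite field is supersingular, and supersingular curves have zero $p$-rank; hence $\gamma(\mathcal{D}_n)=0$.

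Now assume $\alpha$ is even. The key preliminary point is that $h$ is simultaneously the order of $p$ in $(\Za/n\Za)^{\times}$ and in $(\Za/2n\Za)^{\times}$: since $\alpha$ is even we have $2n\mid\alpha n=p^h-1$, so the order of $p$ modulo $2n$ divides $h$, while it is automatically a multiple of the order $h$ of $p$ modulo $n$. Hence $\alpha':=(p^h-1)/(2n)=\alpha/2$ is a positive integer, and Theorem~\ref{p-rank-28-06-2022} applies with this common $h$ both to $\Cc_n$ (parameter $\alpha$, with $n$ even) and to $\Cc_{2n}$ (parameter $\alpha'$, with $2n$ even). Subtracting the two resulting expressions and invoking \eqref{140922} gives
\begin{equation*}
\gamma(\mathcal{D}_n)=\gamma(\Cc_{2n})-\gamma(\Cc_n)=\frac{n}{p^h-1}\left(\left(\frac{p+1}{2}\right)^h-(2\delta_{2n}-\delta_n)\right),
\end{equation*}
that is, the claimed formula with $\tilde\delta_n=2\delta_{2n}-\delta_n$.

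It remains to evaluate $\tilde\delta_n$ under the hypotheses of (i)--(iii), all of which are taken with $\alpha$ even. Since $\alpha=2\alpha'$, each divisibility hypothesis $\alpha\mid(p-1)/2$, $\alpha\mid(p+1)/2$, or $\alpha\mid(p+3)/2$ passes to $\alpha'$, so the matching item of Theorem~\ref{p-rank-28-06-2022} determines $\delta_{2n}$ exactly as it does $\delta_n$. In case (i), item~(i) gives $\delta_n=\alpha+1$ and $\delta_{2n}=\alpha/2+1$, so $\tilde\delta_n=2(\alpha/2+1)-(\alpha+1)=1$; in case (ii), item~(ii) gives $\delta_n=2\alpha$ and $\delta_{2n}=2\alpha'=\alpha$, so $\tilde\delta_n=0$. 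Case (iii) is where the bookkeeping is delicate, and I expect it to be the principal obstacle: $\delta_n$ is given by item~(iii) of Theorem~\ref{p-rank-28-06-2022} when $(p-1)\nmid n$ and by item~(iv) when $(p-1)\mid n$, whereas $\delta_{2n}$ is given by item~(iii) when $(p-1)\nmid 2n$ and by item~(iv) when $(p-1)\mid 2n$, and these two cases for $n$ and for $2n$ need not fall on the same side, which is precisely why the statement of (iii) splits. When $(p-1)\mid 2n$ but $(p-1)\nmid n$ one gets $\delta_n=2\alpha+(-1)^h$ and $\delta_{2n}=2\alpha'+(-1)^h(1-\alpha')=\alpha+(-1)^h(1-\alpha/2)$, hence $\tilde\delta_n=(-1)^h(1-\alpha)$. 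When $(p-1)\mid n$ (whence also $(p-1)\mid 2n$) item~(iv) governs both, and when $(p-1)\nmid 2n$ (whence also $(p-1)\nmid n$) item~(iii) governs both; in either of these two subcases---mutually exclusive and together complementary to the first---a one-line calculation gives $\tilde\delta_n=(-1)^h$. This reproduces the stated formula for (iii), completing the proof.
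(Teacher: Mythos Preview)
Your proof is correct and follows essentially the same approach as the paper: the maximality argument for $\alpha$ odd, the relation \eqref{140922} combined with Theorem~\ref{p-rank-28-06-2022} for $\alpha$ even, and the case analysis based on items (iii) and (iv) of that theorem for part~(iii). Your explicit verification that $h$ is also the order of $p$ modulo $2n$ when $\alpha$ is even is a nice touch that the paper leaves implicit, and your treatment of the three subcases in (iii) is slightly more detailed than the paper's, but the substance is identical.
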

\begin{proof}
	If $\alpha$ is odd, then $p^h = 2n \big((\alpha-1)/2\big) + n+1$, that is, $\mathcal{D}_{n}$ is $\Fa_{p^{2h}}$-maximal. Therefore, $\gamma(\mathcal{D}_{n})=0$. Suppose that $\alpha$ is even. 
	As $(p^h-1)/(2n)=\alpha/2 \in \Za_{>0}$,  equation \eqref{140922} and Theorem \ref{p-rank-28-06-2022} yield 
	\begin{eqnarray*}
		\gamma(\mathcal{D}_n) &=& \dfrac{2n}{p^h-1}\Big(\Big(\dfrac{p+1}{2}\Big)^h - \delta_{2n}\Big) - \dfrac{n}{p^h-1}\Big(\Big(\dfrac{p+1}{2}\Big)^h - \delta_{n}\Big)\\
		&=& \dfrac{n}{p^h-1}\Big(\Big(\dfrac{p+1}{2}\Big)^h - \tilde{\delta}_n\Big).
	\end{eqnarray*}
	%	The proofs of the assertions (i) --- (iii) are very  similar. We choose to provide the proof of the first part of  (iii) to represent them all. Since $(p^h-1)/(2n)=\alpha/2$, $(p-1)\mid 2n$ and $(p-1)\nmid n$, it follows from items (iv) and (iii) of Theorem \ref{p-rank-28-06-2022} that $\delta_{2n} = \alpha+(-1)^h(2-\alpha)/2$ and $\delta_n = 2\alpha + (-1)^h$. Therefore,
	%	\begin{equation*}
	%	\tilde{\delta}_n = 2\delta_{2n}-\delta_{n} = (-1)^h(1-\alpha).
	%	\end{equation*}
	The proofs of the assertions (i) --- (iii) are quite   similar. We provide the proof  of  (iii) only. Since $(p^h-1)/(2n)=\alpha/2$, it follows from items (iii) and (iv) of Theorem \ref{p-rank-28-06-2022} that 
	\begin{equation*}
	\delta_{2n} = \left\{ \begin{array}{ll}
	\alpha + (-1)^h & \mbox{\;if\;} (p-1) \nmid 2n \\
	\alpha + (-1)^h(2-\alpha)/2 & \mbox{\;otherwise\;} \\
	\end{array}
	\right. \;\mbox{and}\;\;
	\delta_{n} = \left\{ \begin{array}{ll}
	2\alpha + (-1)^h(1-\alpha) & \mbox{\;if\;} (p-1) \mid n \\
	2\alpha + (-1)^h & \mbox{\;otherwise.\;} \\
	\end{array}
	\right.
	\end{equation*}
	%$\delta_{2n} = \alpha+(-1)^h(2-\alpha)/2$ and $\delta_n = 2\alpha + (-1)^h$. 
	Therefore,
	\begin{equation*}
	\tilde{\delta}_n = 2\delta_{2n}-\delta_{n} = \left\{\begin{array}{ll}
	(-1)^h(1-\alpha)  & \mbox{\;if\;}  (p-1)\mid 2n  \mbox{\;and\;} (p-1)\nmid n \vspace{0.2cm} \\
	(-1)^h  & \mbox{\;otherwise.\;}
	\end{array}\right.
	\end{equation*}
\end{proof}

\section{A final remark}
In Sections \ref{sec3} and \ref{sec4}, we used  a few techniques  to show how Theorem \ref{cor-p-rank-Cnm} yields
explicit formulas for the $p$-rank of Fermat-type curves, and in Section \ref{sec5}, we demonstrated  how the results can be used to arrive at the $p$-rank of other curves. 
%The findings already extend several results in the literature,  but  we should note  that they are merely
These sections bring a  number of new results to the literature, but  we should note  that they are merely
illustrative in the sense  that, using  Theorem \ref{cor-p-rank-Cnm},  there is still  room to   attain  $p$-rank formulas for several other curves. For example, with some of the techniques of Section \ref{sec3}, it can be proved that if $m$ and  $n$  are divisors of $p-1$, then the curve $y^{m(p+1)}=x^{n(p+1)}+1$ has $p$-rank 
$$\frac{1}{4}\Big(m n p^2+2\left(2 m n- m- n- d\right) p+3 m n-2 m-2 n-2 d+4\Big)-\frac{1}{12 m n}\left(m^2+n^2+d^2\right)(p-1)^2,$$
where $d=\gcd (m,n)$.
\section{Acknowledgements }
	We would like to thank Rachel Pries for her comments on a previous version of this manuscript, which enabled us to enhance our text.  The first  author was supported by CNPq (Brazil), grant 311572/2019-7.

\appendix
\section{The  cardinality  of a few  sets}

This appendix  provides  the  cardinality  of a few  simple sets that  appear throughout
the proofs of our results in Section \ref{sec3}.

\begin{lemma}\label{20112022-1}
	If $p$ is a prime and
	\begin{equation*}\mathcal{L}=\left\{\left(a, b, c, d\right)  \ \middle\vert  
	\begin{array}{l}
	a,b \in \llbracket1, p\rrbracket,\\
	1 \leq c, d \leq \min \left\{a, b\right\},\\
	a+b \leq c+d +p-1\\
	\end{array}
	\right\},
	\end{equation*}
	then $\#\mathcal{L}=p(p+1)(p^2+p+2)/8.$
\end{lemma}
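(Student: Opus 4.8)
The plan is to fix $a$ and $b$, count the admissible pairs $(c,d)$ as a function of $a,b$ alone, and then sum the resulting expression over $a,b\in\llbracket1,p\rrbracket$ using elementary power-sum identities. Write $\mu=\min\{a,b\}$. For fixed $a,b$, the constraints on $(c,d)$ are precisely $1\le c,d\le\mu$ together with $c+d\ge a+b-p+1$, so the inner count is the number of lattice points of $\llbracket1,\mu\rrbracket^2$ lying on or above the line $c+d=a+b-p+1$.

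\textbf{The inner count.} I would split into two cases. If $a+b\le p+1$, the inequality $c+d\ge a+b-p+1$ is automatic (since $c+d\ge2$), so all $\mu^2$ pairs occur. If $a+b\ge p+2$, put $N=a+b-p\ge2$; the key observation is that since $\max\{a,b\}\le p$ we have $N\le\mu$, hence the forbidden pairs — those in $\llbracket1,\mu\rrbracket^2$ with $c+d\le N$ — fill a complete triangle of size $N(N-1)/2$, with no truncation by the bound $\mu$ (indeed $c+d\le N\le\mu$ forces $c,d\le\mu-1$). Thus the number of admissible $(c,d)$ equals $\mu^2$ when $a+b\le p+1$ and $\mu^2-\tfrac{(a+b-p)(a+b-p-1)}{2}$ when $a+b\ge p+2$. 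Consequently
\[
\#\mathcal{L}=\sum_{a,b=1}^{p}\min\{a,b\}^2-\sum_{\substack{1\le a,b\le p\\ a+b\ge p+2}}\frac{(a+b-p)(a+b-p-1)}{2}.
\]

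\textbf{The two outer sums.} For the first sum I would group by the value $v=\min\{a,b\}$, attained by exactly $2(p-v)+1$ pairs, giving $\sum_{v=1}^{p}v^2\bigl(2(p-v)+1\bigr)=(2p+1)\tfrac{p(p+1)(2p+1)}{6}-2\tfrac{p^2(p+1)^2}{4}=\tfrac{p(p+1)(p^2+p+1)}{6}$. For the second sum I would group by $s=a+b$: for $p+2\le s\le 2p$ there are $2p+1-s$ pairs with $a+b=s$, so substituting $t=s-p$ turns it into $\tfrac12\sum_{t=2}^{p}(p+1-t)\,t(t-1)$, which evaluates (via $\sum_{t=1}^p t(t-1)=\tfrac{(p+1)p(p-1)}{3}$ and $\sum_{t=1}^p t^2(t-1)=\tfrac{p(p+1)(3p+2)(p-1)}{12}$) to $\tfrac{p(p+1)(p-1)(p+2)}{24}$. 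Subtracting, $\#\mathcal{L}=\tfrac{p(p+1)}{24}\bigl(4(p^2+p+1)-(p^2+p-2)\bigr)=\tfrac{p(p+1)}{24}\cdot 3(p^2+p+2)=\tfrac{p(p+1)(p^2+p+2)}{8}$, as claimed.

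\textbf{Main obstacle.} There is no conceptual difficulty here; the proof is pure bookkeeping. The one point requiring care is the reduction of the inner count: one must verify that when $a+b\ge p+2$ the excluded triangle $\{c+d\le a+b-p\}$ sits entirely inside $\llbracket1,\mu\rrbracket^2$ (equivalently $a+b-p\le\min\{a,b\}$, which is just $\max\{a,b\}\le p$), so that its cardinality is exactly the binomial coefficient $\binom{a+b-p}{2}$ with no clipping. After that, the remaining work is routine evaluation of the sums of $v^2$, $v^3$, $t(t-1)$, and $t^2(t-1)$ up to $p$, followed by a short algebraic simplification.
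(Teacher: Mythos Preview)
Your proof is correct and follows essentially the same approach as the paper: fix $(a,b)$, count the admissible $(c,d)$ as the lattice points of a square minus a triangle, and then sum. Your execution is somewhat cleaner---you sum over all $(a,b)$ at once by grouping according to $\min\{a,b\}$ and $a+b$, whereas the paper splits into the cases $a<b$, $a>b$, and $a=b$ separately (and handles $p=2$ apart)---but the underlying idea is identical.
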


\begin{proof} For $p=2$, it is straightforward to check that $\#\mathcal{L}=6$. Thus we suppose $p\geq3$, and
	proceed by fixing   $(a,b)  \in \llbracket1, p\rrbracket^2$, and  then counting the pairs $(c,d)$ satisfying the conditions given  in  set $\mathcal{L}$.
	
	Suppose $a<b$. Setting $x:=a-c$ and $y:=a-d$, it suffices to count the pairs $(x,y) \in  \llbracket0, a-1\rrbracket^2$ such that $x+y \leq p-1 -(b-a)$. Note that   
	$(a,b)$ is such that  the square $ \llbracket0, a-1\rrbracket^2$ is below the line $x+y = p-1 -(b-a)$ if and only if   $a+b\leq p+1$. Thus such points give rise to
	$$
	\sum\limits_{b=2}^p\sum\limits_{a=1}^{\min\{  b-1 ,p+1-b \}}a^2=(p^4 + 4p^3 + 2p^2 - 4p - 3)/96
	$$
	elements of $\mathcal{L}$. In the cases $a+b\geq p+2$, we  need to remove the   $(a+b-p-1)(a+b-p)/2$  points above the line $x+y=(p-1)-(b-a)$. That is, the  points $(a,b)$ for which $a+b\geq p+2$ give rise to
	$$
	\sum_{b=\frac{p+3}{2}}^p \sum_{a=p+2-b}^{b-1}(a^2-(a+b-p-1)(a+b-p) / 2)=(5 p^4-4 p^3-2 p^2+4 p-3)/96
	$$
	elements of $\mathcal{L}$. Adding the quantities above gives us the total of  $(p^4 - 1)/16$  elements $(a,b,c,d) \in \mathcal{L}$ for which $a<b$. Clearly, by symmetry, the same holds when $a>b$.
	Hence  $(p^4 - 1)/8$ is the  number of elements $(a,b,c,d) \in \mathcal{L}$ for which $a\neq b$. 
	
	For the cases $a=b$, considering the line $x+y = p-1$, the same procedure gives a total
	of $(2p^3 + 3p^2 + 2p + 1)/8$ elements $(a,b,c,d) \in \mathcal{L}$ for which $a=b$.  Therefore,
	$$\#\mathcal{L}=(p^4 - 1)/8+(2p^3 + 3p^2 + 2p + 1)/8=p(p+1)(p^2+p+2)/8.$$
\end{proof}

\begin{lemma}\label{+formulas-lema1}
	If $b, m, n \in \Za_{>0}$ and
	\begin{equation*}
	\mathcal{L} = \left\{ (a_0, \ldots, a_{m-1},b_0, \ldots, b_{n-1}) \ \middle\vert 
	\begin{array}{l}
	0\leq a_i \leq b-1\\
	0 \leq b_j \leq \min \{a_1, \ldots,a_{m-1}\} 
	\end{array}\right\},
	\end{equation*}
	then $\#\mathcal{L} = \dsum_{i=0}^{b-1} \Big((i+1)^m - i^m\Big)(b-i)^n$.
\end{lemma}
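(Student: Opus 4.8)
The plan is to evaluate $\#\mathcal{L}$ by summing first over the $a$-block and exploiting that, once $(a_0,\dots,a_{m-1})$ is fixed, the coordinates of the $b$-block vary independently subject only to a common upper bound. Concretely, for a fixed tuple $(a_0,\dots,a_{m-1})\in\llbracket0,b-1\rrbracket^m$, set $\mu:=\min\{a_0,\dots,a_{m-1}\}$ (the minimum over all $m$ coordinates, as in the application of this lemma inside the proof of Theorem \ref{+formulas-thm1}). Each constraint $0\le b_j\le\mu$ is imposed separately, so there are exactly $(\mu+1)^n$ admissible choices of $(b_0,\dots,b_{n-1})$. Hence
\begin{equation*}
\#\mathcal{L}=\sum_{(a_0,\dots,a_{m-1})\in\llbracket0,b-1\rrbracket^m}(\mu+1)^n=\sum_{i=0}^{b-1}N_i\,(i+1)^n,
\end{equation*}
where $N_i$ denotes the number of tuples in $\llbracket0,b-1\rrbracket^m$ whose minimum entry equals $i$.

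Next I would compute $N_i$ by a one-line inclusion--exclusion on the minimum: the number of $m$-tuples with every entry $\ge i$ is $(b-i)^m$, and the number with every entry $\ge i+1$ is $(b-i-1)^m$, so $N_i=(b-i)^m-(b-i-1)^m$. Substituting this and reindexing the sum by $j=b-1-i$ — which sends $b-i\mapsto j+1$, $b-i-1\mapsto j$, and $i+1\mapsto b-j$ — yields
\begin{equation*}
\#\mathcal{L}=\sum_{i=0}^{b-1}\big((b-i)^m-(b-i-1)^m\big)(i+1)^n=\sum_{j=0}^{b-1}\big((j+1)^m-j^m\big)(b-j)^n,
\end{equation*}
which is exactly the asserted formula after renaming $j$ as $i$.

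There is no genuine obstacle here: the argument is a routine two-step count — free variation of the $b$-block conditioned on $\mu$, followed by a level-set count for the minimum of the $a$-block. The only points that need a moment's care are the boundary term $i=b-1$ in the inclusion--exclusion, where $(b-i-1)^m=0^m$ is to be read as $0$, and the telescoping character of the final sum, which makes the reindexing transparent.
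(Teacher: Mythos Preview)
Your proof is correct and follows essentially the same approach as the paper: partition the $a$-tuples by the value $t$ of their minimum, count these as $(b-t)^m-(b-t-1)^m$, multiply by the $(t+1)^n$ choices for the $b$-block, and reindex. Your parenthetical remark that the minimum should run over all of $a_0,\dots,a_{m-1}$ (not $a_1,\dots,a_{m-1}$ as typeset in the statement) is also consistent with how the paper itself applies the lemma.
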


\begin{proof}
	Note that for each $t \in \{0, \ldots, b-1\}$, the number of $m$-tuples $(a_0, \ldots, a_{m-1})$ for which $\min\{a_0, \ldots, a_{m-1}\}=t$ is $(b-t)^m-(b-(t+1))^m$. Since for each such $m$-tuple, we have $(t+1)^n$ possible $n$-tuples $(b_0, \ldots, b_{n-1})$, it follows that
	\begin{equation*}
	\#\mathcal{L} = \dsum_{t=0}^{b-1} \Big(\big(b-t\big)^m-\big(b-t-1\big)^m\Big)(t+1)^n.
	\end{equation*}
	Defining  $i:=b-t-1$, the result follows.
\end{proof}

\begin{lemma}\label{+formulas-lema2.1}
	Let $b, m, n \in \Za_{>0}$. If $b \geq 2$  and
	\begin{equation*}
	\mathcal{L}=\left\{\left(a_0, \ldots, a_{m-1},b_0, \ldots, b_{n-1}\right)\ \middle\vert     \begin{array}{l}  0\leq a_{r}\leq  b-1, \\ 0 \leq b_{s} \leq \displaystyle\min_{r}\{a_r, b-1-a_r\} \end{array}\right\},
	\end{equation*}
	then
	\begin{equation*}
	\#\mathcal{L} =\left\{
	\begin{array}{ll}
	\dfrac{1}{2^n}\Big((b+1)^n+\dsum_{i=1}^{(b-1)/2 } \big((2i+1)^m-(2i-1)^m\big)(b-2i+1)^n\Big) & \mbox{if  $b$ is odd}   \vspace{0.2cm}\\
	2^{m-n}\dsum_{i=1}^{b/2} \big(i^m-(i-1)^m\big)(b-2i+2)^n & \mbox{if  $b$ is even}.
	\end{array}
	\right.
	\end{equation*}
\end{lemma}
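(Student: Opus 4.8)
The plan is to stratify $\mathcal{L}$ by the integer
\[
t:=\min_{0\le r\le m-1}\min\{a_r,\,b-1-a_r\},
\]
which is precisely the common upper bound that the defining conditions of $\mathcal{L}$ impose on $b_0,\dots,b_{n-1}$. First I would note that, for a fixed $t\ge 0$, an entry $a_r\in\llbracket0,b-1\rrbracket$ satisfies $\min\{a_r,b-1-a_r\}\ge t$ exactly when $t\le a_r\le b-1-t$, so there are $\max\{b-2t,0\}$ admissible values of $a_r$; in particular the stratum is empty unless $0\le t\le\lfloor(b-1)/2\rfloor$. Hence the number of $m$-tuples $(a_0,\dots,a_{m-1})$ whose associated minimum is $\ge t$ equals $(b-2t)^m$ on this range. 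Passing from ``$\ge t$'' to ``$=t$'' by a one-variable inclusion--exclusion, and observing that for each admissible $m$-tuple the block $(b_0,\dots,b_{n-1})$ then ranges freely over $\llbracket0,t\rrbracket^n$, I would obtain
\[
\#\mathcal{L}=\sum_{t=0}^{\lfloor(b-1)/2\rfloor}\Bigl((b-2t)^m-(b-2t-2)^m\Bigr)(t+1)^n,
\]
with the convention that $(b-2t-2)^m$ is replaced by $0$ in the single case $b-2t=1$ (that is, $b$ odd and $t=(b-1)/2$), where the interval $\llbracket t,b-1-t\rrbracket$ degenerates to one point and the telescoping breaks down.

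The remaining step is a parity-dependent reindexing of this sum. For $b$ odd I would set $i=(b-1)/2-t$ on the generic range $0\le t\le(b-3)/2$ and keep the endpoint $t=(b-1)/2$ aside; then $b-2t=2i+1$, $b-2t-2=2i-1$ and $t+1=(b-2i+1)/2$, so factoring $2^{-n}$ out of each $(t+1)^n$ reproduces the first stated formula, the isolated endpoint contributing the extra term $(b+1)^n/2^n$. For $b$ even there is no endpoint subtlety; setting $i=b/2-t$ on $0\le t\le b/2-1$ gives $b-2t=2i$, $b-2t-2=2(i-1)$ and $t+1=(b-2i+2)/2$, and pulling a factor $2^m$ out of $(2i)^m-(2(i-1))^m$ together with $2^{-n}$ out of $(t+1)^n$ yields the second formula.

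I expect the one genuinely delicate point to be the bookkeeping at the endpoint $t=(b-1)/2$ when $b$ is odd, where the naive telescoping would spuriously subtract $(-1)^m$ and must instead be truncated by hand. Everything else is the same stratify-and-reindex computation already used for Lemma \ref{+formulas-lema1} --- now with $\min\{a_r,b-1-a_r\}$ in place of $\min\{a_1,\dots,a_{m-1}\}$ --- and I would present the $b$ even and $b$ odd cases in parallel to keep the change of variables transparent.
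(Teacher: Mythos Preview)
Your proposal is correct and follows essentially the same route as the paper: both stratify $\mathcal{L}$ by $t=\min_r\{a_r,b-1-a_r\}$, count the admissible $m$-tuples via $(b-2t)^m-(b-2t-2)^m$ (with the endpoint adjustment when $b$ is odd), multiply by $(t+1)^n$, and then reindex by $i=(b-1)/2-t$ (odd case) or $i=b/2-t$ (even case). The only cosmetic difference is that the paper states the count for ``$=t$'' directly rather than first writing ``$\ge t$'' and subtracting.
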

\begin{proof}
	Direct inspection shows that  for each $t \in \{0, \ldots, b-1\}$, the number of $m$-tuples $(a_0, \ldots, a_{m-1})$ for which %$\displaystyle\min_{0 \leq r \leq m-1}\{a_r, b-a_r-1\}=i$ is
	$\displaystyle\min_{r}\{a_r, b-a_r-1\}=t$ is
	\begin{equation*}
	\left\{
	\begin{array}{ll}
	\big(b-2t\big)^m-\big(b-2t-2\big)^m & \mbox{if\;} t <(b-1)/2   \vspace{0.2cm}\\
	1 & \mbox{if\;} t =(b-1)/2 \vspace{0.2cm} \\
	0 & \mbox{if\;} t >(b-1)/2.
	\end{array}
	\right.
	\end{equation*}
	For each such $m$-tuple, we have $(t+1)^n$ possible $n$-tuples $(b_0, \ldots, b_{n-1})$. Since $t=(b-1)/2$ occurs if and only if $b$ is odd, it follows that
	\begin{equation*}
	\#\mathcal{L} =\left\{
	\begin{array}{ll}
	\big((b+1)/2\big)^n+\dsum_{t=0}^{(b-3)/2 } \Big(\big(b-2t\big)^m-\big(b-2t-2\big)^m\Big)(t+1)^n & \mbox{if  $b$ is odd}   \vspace{0.2cm}\\
	\dsum_{t=0}^{(b-2)/2} \Big(\big(b-2t\big)^m-\big(b-2t-2\big)^m\Big)(t+1)^n & \mbox{if  $b$ is even}.
	\end{array}
	\right.
	\end{equation*}
	Defining  $i:=(b-2t-1)/2$ when $b$ is odd, and $i:=(b-2t)/2$ when $b$ is even, the result follows.
\end{proof}

\begin{lemma}\label{+formulas-lema3}
	If $b, m, n \in \Za_{>0}$ and 
	\begin{equation*}
	\mathcal{L} =  \left\{(a_0, \ldots, a_{m-1}, b_{0}, \ldots, b_{2n-1}) \middle\vert  \begin{array}{l}
	0 \leq a_r \leq b-1 \\ 
	0 \leq  b_s \leq \displaystyle\min\{a_r: r \in \llbracket0, m-1\rrbracket\},\; s \leq n-1,\\ 
	0 \leq b_s \leq \displaystyle\min\{b-a_r-1: r \in \llbracket0, m-1\rrbracket\},\; s \geq n
	\end{array}  \right\},
	\end{equation*}
	then 
	\begin{equation*}
	\#\mathcal{L} = \dsum_{i=0}^{b-1} \big((b-i)(i+1)\big)^n + \dsum_{1 \leq j \leq i \leq b-1} \big((j+1)^m-2j^m+(j-1)^m\big)\big((b-i)(i-j+1)\big)^n.
	\end{equation*}
\end{lemma}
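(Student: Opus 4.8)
The plan is to mimic the counting strategy used in Lemmas \ref{+formulas-lema1} and \ref{+formulas-lema2.1}, organizing the count according to the two relevant minima. For a tuple $(a_0,\ldots,a_{m-1}) \in \llbracket 0,b-1\rrbracket^m$, the constraints on the $b_s$ split by whether $s \leq n-1$ or $s \geq n$: the first $n$ coordinates $b_0,\ldots,b_{n-1}$ are bounded by $\mu := \min_r\{a_r\}$, and the last $n$ coordinates $b_n,\ldots,b_{2n-1}$ are bounded by $\nu := \min_r\{b-a_r-1\}$. Note $\mu$ is controlled by the smallest $a_r$ and $\nu$ by the largest $a_r$, so I would stratify the count by the pair $(\mu,\nu)$, equivalently by $(\min_r a_r, \max_r a_r)$. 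For fixed values $j = \min_r a_r$ and $i = \max_r a_r$ with $0 \leq j \leq i \leq b-1$, the number of $(a_0,\ldots,a_{m-1})$ with all $a_r \in \llbracket j,i\rrbracket$, at least one equal to $j$, and at least one equal to $i$ is, by inclusion–exclusion, $(i-j+1)^m - 2(i-j)^m + (i-j-1)^m$ when $i > j$ (and equals $1$ when $i = j$, which is the degenerate case where the formula would read $2^m - 2\cdot 1^m + 0^m$; I must handle $i=j$ separately). For such a tuple, $\mu = j$ and $\nu = b-i-1$, so the number of admissible $(b_0,\ldots,b_{2n-1})$ is $(j+1)^n (b-i)^n = \big((b-i)(j+1)\big)^n$.

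Carrying this out, the main count gives
\begin{equation*}
\#\mathcal{L} = \sum_{i=0}^{b-1} \big((b-i)(i+1)\big)^n + \sum_{0 \leq j < i \leq b-1} \big((i-j+1)^m - 2(i-j)^m + (i-j-1)^m\big)\big((b-i)(j+1)\big)^n,
\end{equation*}
where the first sum collects the diagonal terms $i = j$ (for which the $a$-count is $1$ and the $b$-count is $(i+1)^n(b-i)^n$). It remains to reindex the off-diagonal sum to match the stated formula: substituting $j \mapsto i - j$ inside (i.e.\ writing the new summation variable, also called $j$, for the difference $i - j_{\mathrm{old}}$, which ranges over $1 \leq j \leq i$ since $0 \leq j_{\mathrm{old}} = i - j < i$), the factor $(i-j_{\mathrm{old}}+1)^m - 2(i-j_{\mathrm{old}})^m + (i-j_{\mathrm{old}}-1)^m$ becomes $(j+1)^m - 2j^m + (j-1)^m$ and the factor $(b-i)(j_{\mathrm{old}}+1) = (b-i)(i-j+1)$, yielding exactly $\sum_{1 \leq j \leq i \leq b-1}\big((j+1)^m - 2j^m + (j-1)^m\big)\big((b-i)(i-j+1)\big)^n$, which is the second sum in the claim.

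The main obstacle is purely bookkeeping: I must be careful that the inclusion–exclusion count $(i-j+1)^m - 2(i-j)^m + (i-j-1)^m$ for "$\min = j$ and $\max = i$" is correctly derived (count tuples in $\llbracket j,i\rrbracket^m$, subtract those avoiding $j$, subtract those avoiding $i$, add back those avoiding both), and that the $i = j$ boundary case is split off cleanly rather than forced through a formula involving $(-1)^m = (j-1-j)^m$. I should also double-check the orientation of the two blocks of $b$-coordinates — the first $n$ are tied to $\min_r a_r$ and the last $n$ to $b - 1 - \max_r a_r$ — so that the roles of $j$ and $i$ land on the correct factors. Once the reindexing $j \mapsto i-j$ is performed, the expression matches the statement verbatim, completing the proof.
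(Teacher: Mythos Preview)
Your proposal is correct and follows essentially the same approach as the paper: stratify by the pair $(\min_r a_r,\max_r a_r)$, count the $m$-tuples with prescribed minimum and maximum via inclusion--exclusion (separating the diagonal case), multiply by the $(j+1)^n(b-i)^n$ choices for the $b$-coordinates, and then reindex the off-diagonal sum by the substitution $j\mapsto i-j$. The paper's proof uses $t$ for your $j_{\mathrm{old}}$ and performs the identical substitution $j:=i-t$ at the end.
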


\begin{proof}
	Note that for each $i \in \llbracket0, b-1\rrbracket$ and $t \in \llbracket0, i\rrbracket$, the number of $m$-tuples $(a_0, \ldots, a_{m-1})$ for which $\max\{a_0, \ldots, a_{m-1}\}=i$ and $\min\{a_0, \ldots, a_{m-1}\}=t$ is $1$ when $t=i$, and $(i-t+1)^m-2(i-t)^m+(i-t-1)^m$ when $t \neq i$. Since for each such $m$-tuple, we have $(b-i)^n(t+1)^n$ possible $2n$-tuples $(b_0, \ldots, b_{2n-1})$, it follows that
	\begin{equation*}
	\#\mathcal{L} = \dsum_{i=0}^{b-1} (b-i)^n(i+1)^n + \dsum_{0 \leq t <i \leq b-1} \big((i-t+1)^m-2(i-t)^m+(i-t-1)^m\big)(b-i)^n(t+1)^n.
	\end{equation*}
	Defining  $j:=i-t$, the result follows.
\end{proof}

\bibliographystyle{cas-model2-names}
\bibliography{references}  %%% Uncomment this line and comment out the ``thebibliography'' section below to use the external .bib file (using bibtex) .

%%% Uncomment this section and comment out the \bibliography{references} line above to use inline references.
% \begin{thebibliography}{1}

% 	\bibitem{kour2014real}
% 	George Kour and Raid Saabne.
% 	\newblock Real-time segmentation of on-line handwritten arabic script.
% 	\newblock In {\em Frontiers in Handwriting Recognition (ICFHR), 2014 14th
% 			International Conference on}, pages 417--422. IEEE, 2014.

% 	\bibitem{kour2014fast}
% 	George Kour and Raid Saabne.
% 	\newblock Fast classification of handwritten on-line arabic characters.
% 	\newblock In {\em Soft Computing and Pattern Recognition (SoCPaR), 2014 6th
% 			International Conference of}, pages 312--318. IEEE, 2014.

% 	\bibitem{hadash2018estimate}
% 	Guy Hadash, Einat Kermany, Boaz Carmeli, Ofer Lavi, George Kour, and Alon
% 	Jacovi.
% 	\newblock Estimate and replace: A novel approach to integrating deep neural
% 	networks with existing applications.
% 	\newblock {\em arXiv preprint arXiv:1804.09028}, 2018.

% \end{thebibliography}

\end{document}